\newcommand{\dvol}{\operatorname{dvol}}
\newcommand{\Hess}{\operatorname{Hess}}
\newcommand{\N}{{\mathbb N}}
\newcommand{\R}{{\mathbb R}}
\newcommand{\Ric}{\operatorname{Ric}}
\newcommand{\Tr}{\operatorname{Tr}}
\newcommand{\vol}{\operatorname{vol}}
\newcommand{\Z}{{\mathbb Z}}
\numberwithin{equation}{section}
\theoremstyle{plain}
\newtheorem{definition}{Definition}
\newtheorem{lemma}{Lemma}
\newtheorem{theorem}{Theorem}
\newtheorem{proposition}{Proposition}
\newtheorem{corollary}{Corollary}
\theoremstyle{remark}
\newtheorem{remark}{Remark}
\newtheorem{example}{Example}
\def\R{\mathbb R}
\def\N{\mathbb N}
\def\Z{\mathbb Z}
\def\Hess{\mathop{{\rm Hess}\,}}
\def\2dr#1#2{\left. \frac{d^2}{d{#1}^2} \right |_{#2}}
\def\d2#1{\frac{d^2}{d{#1}^2}}
\DeclareMathOperator*{\grad}{grad}
\def\begeq{\begin{equation} \label{} \label{}}
\def\endeq{\end{equation}}
\def\begar{\begin{eqnarray}}
\def\endar{\end{eqnarray}}
\def\begar*{\begin{eqnarray*}}
\def\endar*{\end{eqnarray*}}
\def\begal{\begin{align} \label{} \label{}}
\def\endal{\end{align}}
\def\begal*{\begin{align*}}
\def\endal*{\end{align*}}
\theoremstyle{definition}
\theoremstyle{remark}
\newtheorem*{Thm*}{Theorem}
\newtheorem*{Lem*}{Lemma}
\newtheorem*{Conj*}{Conjecture}
\newtheorem*{Cor*}{Corollary}
\newtheorem*{Def*}{Definition}
\newtheorem*{Prop*}{Proposition}
\newtheorem*{Exo*}{Exercise}
\newtheorem*{Exs*}{Examples}
\newtheorem*{Ex*}{Example}
\newtheorem*{Rk*}{Remark}
\newtheorem*{Rks*}{Remarks}
\begin{document}

\title[Optimal transport and Perelman's reduced volume]
{Optimal transport and Perelman's reduced volume}

\author{John Lott}
\address{Department of Mathematics\\
University of Michigan\\
Ann Arbor, MI  48109-1109\\
USA} \email{lott@umich.edu}

\thanks{This research was partially 
supported by NSF grant DMS-0604829}
\date{December 8, 2008}

\begin{abstract}
We show that a certain entropy-like function is convex, under an
optimal transport problem that is adapted to Ricci flow.
We use this to reprove the monotonicity of Perelman's reduced
volume.
\end{abstract}

\maketitle
\section{Introduction}

One of the major tools introduced by Perelman is his
reduced volume $\widetilde{V}$ \cite[Section 7]{Perelman}.  
This is a certain geometric quantity which is
monotonically nondecreasing in time when one has a Ricci flow
solution.  Perelman's main use of the reduced
volume was to rule out local collapsing in a Ricci flow.

Before giving his rigorous proof that $\widetilde{V}$ is monotonic,
Perelman gave a heuristic argument \cite[Section 6]{Perelman}.
Given a Ricci flow
solution $(M, g(\tau))$ on a compact manifold $M$, 
where $\tau$ is backward time, Perelman
considered the manifold $\widetilde{M} = M \times S^N \times \R^+$
with the Riemannian metric
\begin{equation} \label{1.1}
\widetilde{g} = g(\tau) \: + \: 2N \tau g_{S^N} \: + \:
\left( \frac{N}{2\tau} \: + \: R \right) \: d\tau^2.
\end{equation}
Here $R$ denotes the scalar curvature and $g_{S^N}$ is the metric on $S^N$ with
constant sectional curvature $1$.
Perelman showed that the Ricci curvatures of $\widetilde{M}$ vanish to
leading order in $N$. Now the Bishop-Gromov inequality says that
if a complete Riemannian  manifold $Z$ has nonnegative Ricci curvature then 
$r^{- \dim(Z)} \: \vol(B_r(z))$ is nonincreasing in $r$. 
Perelman formally
applied the Bishop-Gromov inequality to $\widetilde{M}$, 
translated the result back
down to $M$ and took the
limit when $N \rightarrow \infty$, to get the monotonicity of $\widetilde{V}$.

In another direction, there has been recent work showing the
equivalence between the nonnegative Ricci curvature of a Riemannian
manifold $M$, and the convexity (in time) of certain
entropy functions in an optimal transport problem on $M$
\cite{Cordero-Erausquin-McCann-Schmuckenschlaeger 
(2001),Lott-Villani,Otto-Villani (2000),Sturm,Sturm2,Sturm-von Renesse 
(2005)}. A survey is in \cite{Lott (2008b)} and a
detailed exposition is in Villani's book \cite{Villani2}.
(Background information on optimal transport is in Villani's books
\cite{Villani,Villani2}.)
In view of Perelman's heuristic argument, it is natural to wonder
whether having a Ricci flow solution $(M, g(t))$ implies the convexity
of an entropy in some optimal transport problem on $M$.
The idea is that the asymptotic nonnegative Ricci curvature on
$\widetilde{M}$ should imply the asymptotic convexity of the entropy in an
optimal transport problem on $\widetilde{M}$, which should then
translate to a statement about optimal transport on $M$.

It turns out that this can be done. 
The optimal transport problem on $M$ has a cost function coming from
Perelman's ${\mathcal L}$-functional. This sort of transport
problem was introduced by Topping \cite{Topping}, as described below,
with the purpose of constructing certain monotonic quantities
for a Ricci flow.
Bernard-Buffoni
\cite{Bernard-Buffoni (2007)} and 
Villani \cite[Chapters 7,10,13]{Villani2} gave
analytic results for
general time-dependent cost functions.

In fact, there are three relevant costs for Ricci flow : one corresponding to
Perelman's ${\mathcal L}$-functional (which we will call ${\mathcal L}_-$), 
one corresponding to the Feldman-Ilmanen-Ni ${\mathcal L_+}$-functional
\cite{Feldman-Ilmanen-Ni (2005)} and a third one which we call
${\mathcal L}_0$. 
In the case of the ${\mathcal L}_-$-cost, the main result of the
paper is the following.

\begin{theorem} \label{thm1}
Suppose that $(M, g(\tau))$ is a Ricci flow solution on a 
connected closed $n$-dimensional manifold $M$, 
where $\tau$ denotes
backward time. Let $c(\tau)$ be the displacement interpolation in an
optimal transport problem between absolutely continuous
probability measures
$c(\tau_0)$ and $c(\tau_1)$, with ${\mathcal L}_-$-cost. Then 
${\mathcal E}(c(\tau)) \: + \: \int_M \phi(\tau) \: dc(\tau) \: + \:
\frac{n}{2} \: \log(\tau)$ is convex in the variable $s = \tau^{- \: \frac12}$.
\end{theorem}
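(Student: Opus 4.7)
The strategy is to implement Perelman's heuristic rigorously. Conceptually, on the warped product $\widetilde{M}$ of \eqref{1.1}, the Ricci tensor vanishes to leading order in $N$, so by the Sturm--von Renesse characterization the Boltzmann entropy on $(\widetilde{M}, \widetilde{g})$ should be asymptotically convex along Wasserstein geodesics as $N \to \infty$. Restricting to $S^N$-invariant transports, a Wasserstein geodesic on $\widetilde{M}$ projects to an $\mathcal{L}_-$-displacement interpolation on $M$. Because the $d\tau^2$-coefficient in $\widetilde{g}$ is asymptotically $N/(2\tau)$, the $\widetilde{g}$-arc length along the $\tau$-direction is proportional to $s = \tau^{-1/2}$ up to the factor $\sqrt{2N}$. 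Decomposing the Boltzmann entropy of the transported measure on $\widetilde{M}$ into the $M$-entropy $\mathcal{E}(c(\tau))$, plus a contribution from the $S^N$ factor (producing the $\int_M \phi(\tau)\, dc(\tau)$ term), plus a contribution from the volume form in the $d\tau$ direction (producing $\frac{n}{2} \log \tau$ after absorbing $N$-dependent but $s$-affine constants), identifies both the functional and the parameter appearing in the theorem.

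To make this precise directly on $M$, I would invoke the Bernard--Buffoni/Villani framework \cite{Bernard-Buffoni (2007)}, \cite[Chapters 7, 10, 13]{Villani2} for optimal transport with time-dependent costs in order to set up the $\mathcal{L}_-$-displacement interpolation. In the interior of the interpolation, the density $\rho(\tau, \cdot)$ of $c(\tau)$ satisfies a continuity equation and a Kantorovich-type potential $\phi(\tau, \cdot)$ satisfies the $\mathcal{L}_-$-Hamilton--Jacobi equation. Differentiating the functional in $\tau$, and then rewriting in terms of $s$, would require three inputs: the continuity equation, the Hamilton--Jacobi equation, and the Ricci flow equation $\partial_\tau g = 2\Ric$ (which governs the $\tau$-derivative of $d\vol$). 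The second $s$-derivative should then collect into a manifestly nonnegative expression, morally of the form $\int_M \bigl| \Hess \phi + \Ric - \tfrac{1}{2\tau} g \bigr|^2 \, dc(\tau)$, where the precise square is dictated by the computation.

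The main obstacle is the algebraic identity in this second step: many cancellations are required, and the Ricci flow equation has to enter in exactly the right way to complete the squared Bochner-like term that witnesses convexity. Both the change of parameter $s = \tau^{-1/2}$ and the specific corrections $\int_M \phi\, dc + \frac{n}{2}\log\tau$ are forced by this calculation, and they mirror exactly what the heuristic on $\widetilde{M}$ predicts. A secondary technical issue is that $\phi$ is only semi-concave in general, so the pointwise differentiations should be carried out on the smooth set where the interpolation is regular, and the convexity inequality then extended to all of $[\tau_0, \tau_1]$ by lower semicontinuity; this is by now standard in the displacement-interpolation framework cited above.
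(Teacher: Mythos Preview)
Your proposal follows the Eulerian/Otto-calculus route: differentiate the functional twice along the displacement interpolation using the continuity equation, the $\mathcal{L}_-$-Hamilton--Jacobi equation, and the Ricci flow equation, and recognize the result as $\tau^3\int_M\bigl|\Ric+\Hess\phi-\tfrac{g}{2\tau}\bigr|^2\,dc(\tau)$. The paper carries out exactly this computation in $P^\infty(M)$ (Corollary~\ref{cor7}, equation~(\ref{6.26})), so your prediction of the squared term is correct. However, the paper's \emph{rigorous} proof in $P(M)$ (Proposition~\ref{prop16}) takes a genuinely different, Lagrangian route: it splits the functional, invokes Topping's inequality~(\ref{7.2}) for the entropy piece (which already absorbs all the Bochner-type analysis), and computes the $\int_M\phi\,dc$ piece directly along individual $\mathcal{L}_-$-geodesics via Perelman's identities (\ref{7.5})--(\ref{7.8}); Hamilton's trace Harnack expression $H(\nabla\phi)$ appears in both pieces with opposite signs and cancels. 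What your approach buys is a single transparent nonnegative integrand; what the paper's approach buys is that it sidesteps the regularity of $\phi$ entirely by working trajectory-wise and quoting Topping.

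The soft spot in your proposal is the last paragraph. You assert that extending the pointwise second-derivative computation from the regular set to the full interpolation is ``by now standard,'' but the paper explicitly declines to make this step rigorous in the time-dependent-cost setting (see the remark following Proposition~\ref{prop16} and the discussion at the end of the introduction), and reverts to Lagrangian methods precisely to avoid it. Lower semicontinuity of $\mathcal{E}$ alone does not yield convexity of the whole functional; you would need an Eulerian argument adapted to Lagrangians of the form $\sqrt{\tau}(|v|^2+R)$, along the lines of Daneri--Savar\'e or Otto--Westdickenberg, and that adaptation is not in the literature the paper cites. So your outline is conceptually right and matches the paper's formal calculation, but the rigorous closure you gesture at is exactly the step the author chose not to attempt.
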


Here ${\mathcal E}(c(\tau))$ is the (negative) relative entropy
of $c(\tau)$ with respect to the time-$\tau$ Riemannian volume density.
The function $\phi(\tau)$ is the potential for the velocity field
in the displacement
interpolation.

We show that the monotonicity of Perelman's reduced volume
$\widetilde{V}$ is a consequence of Theorem \ref{thm1}; see
Corollary \ref{cor8}.

There are two main approaches to optimal transport problems :
the Eulerian approach and the Lagrangian approach. Let
$P(M)$ denote the Borel probability measures on a static
Riemannian manifold $M$ and let
$P^\infty(M)$ denote those with a smooth positive density.
The Eulerian approach of Benamou-Brenier considers smooth maps
$c \: : \: [t_0, t_1] \rightarrow P^\infty(M)$ that
minimize an action $E(c)$, among all such curves
with the same endpoints \cite{Benamou-Brenier (2000)}.
In the associated Otto calculus, one considers $P^\infty(M)$ to be an
infinite-dimensional Riemannian manifold and $E(c)$
to be the corresponding energy of the curve $c$, so the
Euler-Lagrange equation for $E$ becomes the
geodesic equation on $P^\infty(M)$ \cite{Otto (2001)}.
Otto and Villani used this approach to compute the
time-derivatives of the entropy function ${\mathcal E}$ along
the curve $c$ \cite{Otto-Villani (2000)}. 

The Lagrangian approach to optimal transport considers
a displacement interpolation $c$, i.e. a geodesic
in the Eulerian approach, to be specified by the
family of geodesics in $M$ that describe the trajectories
taken by particles in the original mass distribution
$c(t_0)$, when transporting it to the final mass distribution
$c(t_1)$. In the case of optimal transport on Riemannian manifolds,
the Lagrangian approach was developed by McCann \cite{McCann (2001)}
and Cordero-Erausquin, McCann and Schmuckenschl\"ager
\cite{Cordero-Erausquin-McCann-Schmuckenschlaeger 
(2001)}.

Comparing the two approaches, the Eulerian approach is perhaps more
insightful whereas the Lagrangian approach is better suited to
deal with the regularity issues that arise in
optimal transport. (See, however, the
papers of Daneri-Savar\'e \cite{Daneri-Savare (2008)} and
Otto-Westdickenberg \cite{Otto-Westdickenberg (2005)}, which
prove results about optimal transport in $P(M)$ using the
Eulerian approach along with density arguments.) Much of
the present paper consists of describing an Otto calculus
which is adapted for the optimal transport of measures under
a Ricci flow background.

There has been earlier work relating optimal transport to Ricci flow.
The author \cite{Lott (2005)} and McCann-Topping \cite{McCann-Topping}
observed that under a Ricci flow background,
if $c_1(t)$ and $c_2(t)$ are solutions of the
backward heat equation on $P(M)$ then the Wasserstein distance
$W_2(c_1(t), c_2(t))$ is monotonically nondecreasing in $t$.
A detailed proof using the Lagrangian approach
appears in \cite{McCann-Topping}.  McCann-Topping
noted that this monotonicity property characterizes supersolutions
to the Ricci flow equation. In follow-up work, Topping
considered optimal transport with the ${\mathcal L}_-$-cost function
and showed the monotonicity of a certain distance function between
the measures $c_1$ and $c_2$, when taken at different but related times.
We refer to \cite{Topping} for the precise statement.
He then used this to rederive the monotonicity of Perelman's
${\mathcal W}$-functional. In the Lagrangian proof of Theorem \ref{thm1}
we use Topping's calculations for the $\tau$-derivatives
of ${\mathcal E}(c(\tau))$; see Remark \ref{addedremark}.

The outline of this paper is as follows. In Section \ref{section2}
we review the Otto calculus for optimal transport
on a manifold with a time-independent Riemannian metric.
In Section \ref{section3} we use the Otto calculus to 
prove that if $(M, g(t))$ is a Ricci flow solution and
$c_1(t), c_2(t)$ are solutions of the
backward heat equation in $P^\infty(M)$ then the Wasserstein distance
$W_2(c_1(t), c_2(t))$ is monotonically nondecreasing in $t$.
In Section \ref{section4} we introduce the ${\mathcal L}_0$-cost.
We give an Otto calculus for optimal transport with 
${\mathcal L}_0$-cost, under a background Ricci flow solution.
We then show the ${\mathcal L}_0$-analog of Theorem \ref{thm1} above.
In Section \ref{section5} we give the ${\mathcal L}_0$-analog of
Topping's monotonicity statement regarding the distance between
two solutions of the backward heat equation on measures.
We use this to reprove the monotonicity of Perelman's 
${\mathcal F}$-functional.
In Section \ref{section6} we give an Otto calculus for optimal transport with 
${\mathcal L}_-$-cost, under a background Ricci flow solution.
In Section \ref{section7} we prove Theorem \ref{thm1} and we use it 
to reprove the monotonicity of Perelman's reduced volume.
In Section \ref{section8} we discuss what Ricci flow should mean on
a smooth metric-measure space. In Appendix \ref{section9} we indicate
how the results of Sections \ref{section6} and \ref{section7} extend to the
${\mathcal L}_+$-cost.

Regarding the overall method of proof in this paper,
calculations in the Eulerian formalism
can be considered to be either rigorous statements on $P^\infty(M)$ or formal
statements on $P(M)$. When a suitable density result is available, one
can use the Eulerian methods to give rigorous proofs on $P(M)$.
In this way, we give rigorous Eulerian proofs
on $P(M)$ of the statements in Sections \ref{section2} and \ref{section3}, making use of the
nontrivial Otto-Westdickenberg density
result \cite{Otto-Westdickenberg (2005)}. 
Sections \ref{section4}-\ref{section7} contain calculations in the Eulerian
framework under a Ricci flow background.
We expect that one can extend these calculations to rigorous proofs on $P(M)$, by
adapting the density methods of \cite{Daneri-Savare (2008)} 
or \cite{Otto-Westdickenberg (2005)} to the setting of time-dependent cost
functions. 
We do not address this issue here.  Consequently, we revert to Lagrangian
methods when we want to give rigorous proofs
in $P(M)$ of the statements in Sections \ref{section4}-\ref{section7}.

I thank Peter Topping and C\'edric Villani for discussions, and the referee for
helpful comments.
I thank the UC-Berkeley Mathematics Department and the IHES for 
their hospitality while part of this
research was performed.

\section{Otto calculus} \label{section2}

This section is mostly concerned with known results about
optimal transport on a fixed Riemannian manifold $M$. It is a warmup for
the later sections, which extend
the results to the case when the Riemannian metric evolves under the 
Ricci flow.

We use the
Otto calculus to give rigorous proofs of certain statements about the space of
smooth probability measures $P^\infty(M)$. These proofs can
then be considered as formal proofs of the analogous statements
on the space of all probability measures $P(M)$. The rigorous
proofs of the statements on $P(M)$ are usually done by the Lagrangian
approach, but one can also use the density of $P^\infty(M)$ in
$P(M)$
\cite{Daneri-Savare (2008),Otto-Westdickenberg (2005)}.
Most of the calculations in this section can be extracted from
\cite{Otto-Villani (2000)} and \cite{Otto-Westdickenberg (2005)}.

In what follows, we use the Einstein summation convention freely.

Let $(M,g)$ be a smooth connected closed
(= compact boundaryless) Riemannian manifold of dimension $n > 0$. 
We denote the Riemannian
density by $\dvol_M$. Let $P(M)$ denote the space of Borel probability
measures on $M$, equipped with the Wasserstein metric $W_2$. For relevant
results about optimal transport and 
the Wasserstein metric, we refer to \cite[Sections 1 and 2]{Lott-Villani}
and references therein.
A fuller exposition is in the books
\cite{Villani} and \cite{Villani2}.
As $P(M)$ is a length space, it makes sense
to talk about geodesics in $P(M)$, which we will always take to be
minimizing and parametrized proportionately to arc-length.

Put 
\begin{equation} \label{2.1} 
P^\infty(M) \: = \: \{ \rho \: \dvol_M \: : \: \rho \in C^\infty(M), \rho > 0, \int_M \rho \:
\dvol_M \: = \: 1\}.
\end{equation}
Then $P^\infty(M)$ is a dense subset of $P(M)$, 
as is the complement of
$P^\infty(M)$ in $P(M)$.
For the purposes of this paper,
we give $P^\infty(M)$ the smooth topology. (This differs from the subspace topology on
$P^\infty(M)$ coming from its inclusion in $P(M)$.) Then $P^\infty(M)$ has the structure of an
infinite-dimensional smooth manifold in the sense of
\cite{Kriegl-Michor}. The formal calculations in this section are rigorous
calculations on the smooth manifold $P^\infty(M)$.

Given $\phi \in C^\infty(M)$, define a vector field $V_\phi$ on
$P^\infty(M)$ by saying that for $F \in C^\infty(P^\infty(M))$,
\begin{align} \label{2.2}
(V_\phi F)(\rho \dvol_M) \: & = \: 
\frac{d}{d\epsilon} \Big|_{\epsilon = 0} 
F \left( \rho \dvol_M \: - \: \epsilon \: \nabla^i ( \rho \nabla_i \phi) \dvol_M \right) \\
&= \: \frac{d}{d\epsilon} \Big|_{\epsilon = 0} 
F \left( \Phi_*^\epsilon (\rho \dvol_M) \right), \notag
\end{align}
where
$\Phi^\epsilon(m) = \exp_m (\epsilon \nabla_m \phi)$.
The map $\phi \rightarrow V_\phi$ passes to an isomorphism
$C^\infty(M)/\R \rightarrow T_{\rho \dvol_M} P^\infty(M)$.
This parametrization of $T_{\rho \dvol_M} P^\infty(M)$ goes back to
Otto's paper
\cite{Otto (2001)}; see \cite{Ambrosio-Gigli-Savare (2004)}
for further discussion.
Otto's Riemannian metric $G$ on $P^\infty(M)$ is given \cite{Otto (2001)} by
\begin{align} \label{2.3}
G( V_{\phi_1}, V_{\phi_2}) (\rho \dvol_M) \: & = \:
\int_M \langle \nabla \phi_1, \nabla \phi_2 \rangle \: \rho \: \dvol_M \\
&  = \: - \: 
\int_M  \phi_1 \nabla^i ( \rho \nabla_i \phi_2) \: \dvol_M. \notag
\end{align}
In view of (\ref{2.2}), we write $\delta_{V_{\phi}} \rho \: = \: - \:  \nabla^i ( \rho \nabla_i \phi)$.
Then
\begin{equation} \label{2.4} 
G (V_{\phi_1}, V_{\phi_2}) (\rho \dvol_M) \:  = \:
\int_M  \phi_1 \: \delta_{V_{\phi_2}} \rho \: \dvol_M
\: = \: \int_M  \phi_2 \: \delta_{V_{\phi_1}} \rho \: \dvol_M.
\end{equation}

We now relate the Riemannian metric $G$ to the Wasserstein metric $W_2$.
In \cite{Otto-Villani (2000)} it was heuristically
shown that the geodesic distance coming from (\ref{2.4})
equals the Wasserstein metric.  To give a rigorous relation,
we recall that a curve 
$c \: : [0,1] \rightarrow P(M)$ has a length given by 
\begin{equation} \label{2.5}
L(c) \: = 
\: \sup_{J\in \N}\; \sup_{0 = s_0 \le s_1 \le \ldots \le s_J = 1}
\sum_{j=1}^{J} W_2\bigl(c(s_{j-1}), c(s_{j})\bigr).
\end{equation}
From the triangle inequality, the expression $\sum_{j=1}^{J} W_2\bigl(c(s_{j-1}), c(s_{j})\bigr)$
is nondecreasing under a refinement of the partition $0 = s_0 \le s_1 \le \ldots \le s_J = 1$.

If $c \: : [0,1] \rightarrow P^\infty(M)$ is a smooth curve in $P^\infty(M)$ 
then 
we write $c(s) \: = \: \rho(s) \: \dvol_M$ and let $\phi(s) \in C^\infty(M)$ 
satisfy
\begin{equation} \label{2.6}
\frac{\partial \rho}{\partial s} \: = \: - \: 
\nabla^i \left( \rho \nabla_i \phi \right).
\end{equation}
It is easy to see, using the spectral theory of the
weighted Laplacian on $L^2(M, \rho(s) \: \dvol_M)$, that
$\phi(s)$ exists.
Note that $\phi(s)$ is uniquely defined up to an additive constant.
The Riemannian length of $c$, as
computed using (\ref{2.3}), is
\begin{equation} \label{2.7}
\int_0^1 \sqrt{G(c^\prime(s), c^\prime(s))} \: ds \: = \:
\int_0^1 \left( \int_M |\nabla \phi(s)|^2 \: \rho(s) \: \dvol_M \right)^{\frac12} \: ds.
\end{equation}
\begin{theorem}  \cite[Proposition 1]{Lott (2008)}
If $c \: : [0,1] \rightarrow P^\infty(M)$ is a smooth 
immersed curve then the two notions of length agree, in the sense that
\begin{equation} \label{2.8} 
L(c) \: = \: \int_0^1 \sqrt{G( c^\prime(s), c^\prime(s) )} \: ds.
\end{equation}
\end{theorem}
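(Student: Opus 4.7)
The plan is to prove the two inequalities of (\ref{2.8}) separately. Given the definition (\ref{2.5}) of $L(c)$ as a supremum over partitions, the upper bound reduces to estimating each summand $W_2\bigl(c(s_{j-1}), c(s_j)\bigr)$ uniformly in the partition, while the lower bound reduces to identifying the metric derivative of $c$ in $(P(M), W_2)$ with the Riemannian speed $\sqrt{G(c'(s), c'(s))}$ at each $s$.

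For the upper bound, I would use the flow $\Phi^{s'}_s \colon M \to M$ of the time-dependent gradient vector field $\nabla \phi(\sigma)$ as a (generally non-optimal) transport plan between $c(s_{j-1})$ and $c(s_j)$. The continuity equation (\ref{2.6}) ensures $(\Phi^{s_{j-1}}_{s_j})_\ast c(s_{j-1}) = c(s_j)$, hence
\begin{equation*}
W_2^2\bigl(c(s_{j-1}), c(s_j)\bigr) \;\le\; \int_M d^2\bigl(m, \Phi^{s_{j-1}}_{s_j}(m)\bigr)\, dc(s_{j-1})(m).
\end{equation*}
Bounding the Riemannian distance by the length of the flow line, applying Minkowski's integral inequality to pull the $L^2$-norm inside the time integral, and invoking the pushforward identity on the intermediate measures then yields
\begin{equation*}
W_2\bigl(c(s_{j-1}), c(s_j)\bigr) \;\le\; \int_{s_{j-1}}^{s_j} \|\nabla \phi(\sigma)\|_{L^2(c(\sigma))}\, d\sigma.
\end{equation*}
Summing in $j$ and taking the supremum over partitions gives the upper bound immediately.

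For the lower bound, I would establish the pointwise asymptotic
\begin{equation*}
\liminf_{h \to 0^+} \frac{W_2\bigl(c(s), c(s+h)\bigr)}{h} \;\ge\; \sqrt{G(c'(s), c'(s))}
\end{equation*}
and deduce the integral bound via the standard metric-geometry identity expressing the length of an absolutely continuous curve as the integral of its metric derivative (so that the Riemann sums in (\ref{2.5}) converge upward under refinement). The pointwise asymptotic itself follows from Kantorovich duality: evaluating the dual functional $\psi \mapsto \int_M \psi\, dc(s) + \int_M \psi^c\, dc(s+h)$ at the test potential $\psi = h \phi(s)$ and expanding the $d^2/2$-conjugate of $h\phi(s)$ to second order in $h$ via an inf-convolution on $M$ produces the bound $W_2^2\bigl(c(s), c(s+h)\bigr) \ge h^2 \|\nabla \phi(s)\|^2_{L^2(c(s))} + o(h^2)$.

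The main technical obstacle is the lower bound, specifically the uniform-in-$m$ justification of the second-order Taylor expansion of the $d^2/2$-conjugate of $h \phi(s)$. This requires that the minimizer $x = x(y,h)$ in $(h\phi(s))^c(y) = \inf_x\bigl[d^2(x,y)/2 - h\phi(s)(x)\bigr]$ be controlled uniformly as $h \to 0$, which follows from an implicit-function argument on the compact manifold $M$ using the smoothness of $\phi(s)$. An alternative route, in line with the Lagrangian literature referenced in the introduction, would be to invoke the regularity theory for Brenier-McCann transport maps between the nearby smooth densities $c(s)$ and $c(s+h)$ to show these maps are $O(h)$-close to the exponential displacement $m \mapsto \exp_m\bigl(h \nabla \phi(s)(m)\bigr)$.
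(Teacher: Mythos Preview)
The paper does not give its own proof of this theorem; it simply quotes the result from \cite[Proposition 1]{Lott (2008)}. So there is no in-paper argument to compare against.

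That said, your outline is correct and follows one of the standard routes to this identity. The upper bound is exactly the Benamou--Brenier estimate, and your reduction of the lower bound to the pointwise inequality $\liminf_{h\to 0} W_2(c(s),c(s+h))/h \ge \|\nabla\phi(s)\|_{L^2(c(s))}$, combined with the metric-space identity $L(c)=\int_0^1|c'|(s)\,ds$ for absolutely continuous curves, is legitimate (the upper-bound step already establishes absolute continuity of $c$ in $(P(M),W_2)$). The Kantorovich-duality computation with test potential $h\phi(s)$ goes through as you describe: on a compact manifold with $\phi(s)\in C^\infty(M)$, the minimizer in the $c$-transform lies at $\exp_y\bigl(h\nabla\phi(s)(y)\bigr)+o(h)$ uniformly in $y$, and the second-order expansion then gives $\frac12 W_2^2\ge \frac{h^2}{2}\|\nabla\phi(s)\|_{L^2(c(s))}^2+o(h^2)$. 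Your concern about this step is the right place to be careful, but only elementary compactness and smoothness are needed; the Brenier--McCann regularity alternative you mention would be overkill.
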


Next, consider the Lagrangian 
\begin{equation} \label{2.9}
E(c) \: = \: \frac12 \int_0^1 G( c^\prime(s), c^\prime(s) ) \: ds \: = \:
\frac12 \int_0^1 \int_M |\nabla \phi(s)|^2 \: \rho(s) \: \dvol_M \: ds.
\end{equation}

\begin{theorem} \label{OW} \cite[Proposition 4.3]{Otto-Westdickenberg (2005)}
Fix measures $\rho_0 \: \dvol_M, \rho_1 \: \dvol_M \in P^\infty(M)$. 
Then
the infimum of $E$, over smooth paths in $P^\infty(M)$ with those endpoints, is
$\frac12 \: W_2(\rho_0 \: \dvol_M, \rho_1 \: \dvol_M)^2$.
\end{theorem}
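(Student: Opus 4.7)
The plan is to prove the two inequalities separately. For the easy direction $\frac12 W_2(\rho_0 \dvol_M, \rho_1 \dvol_M)^2 \le \inf E$, I would invoke the preceding theorem identifying $L(c)$ with $\int_0^1 \sqrt{G(c'(s),c'(s))}\,ds$. Any smooth curve $c$ in $P^\infty(M)$ is continuous into $(P(M),W_2)$, so the triangle inequality applied to the partition-definition (\ref{2.5}) gives $L(c) \ge W_2(c(0),c(1))$. Cauchy--Schwarz applied to the right-hand side of (\ref{2.8}) yields $L(c)^2 \le 2E(c)$, and combining these inequalities gives $\frac12 W_2^2 \le E(c)$ for every admissible $c$.

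For the harder direction $\inf E \le \frac12 W_2^2$, the strategy is to approximate a Benamou--Brenier optimal path in $P(M)$ by smooth paths in $P^\infty(M)$. Classical optimal transport theory produces a displacement interpolation $\bar\rho_s \dvol_M$ in $P(M)$ together with a velocity field $\bar v_s$ satisfying the continuity equation $\partial_s \bar\rho_s + \nabla \cdot (\bar\rho_s \bar v_s) = 0$ distributionally, with $\int_0^1 \int_M |\bar v_s|^2 \bar\rho_s \dvol_M \, ds = W_2(\rho_0 \dvol_M, \rho_1 \dvol_M)^2$. I would regularize by a small amount of heat flow in the spatial variable: set $\rho^\epsilon_s = e^{\epsilon \Delta}\bar\rho_s$, form the smoothed momentum $m^\epsilon_s = e^{\epsilon \Delta}(\bar\rho_s \bar v_s)$, and then define a new velocity potential $\phi^\epsilon_s$ via the elliptic equation $\nabla \cdot (\rho^\epsilon_s \nabla \phi^\epsilon_s) = \nabla \cdot m^\epsilon_s$, which is uniquely solvable up to constants since $\rho^\epsilon_s$ is smooth and strictly positive.

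The central estimate is that this smoothing does not increase the action. Using the joint convexity of $(\rho,m) \mapsto |m|^2/\rho$ together with a Jensen-type argument against the heat kernel $p_\epsilon(x,y)$, one obtains, after integration,
$$\int_M \frac{|m^\epsilon_s|^2}{\rho^\epsilon_s} \dvol_M \; \le \; \int_M e^{\epsilon\Delta}\bigl(|\bar v_s|^2 \bar\rho_s\bigr)\dvol_M \; = \; \int_M |\bar v_s|^2 \bar\rho_s \dvol_M;$$
since the Helmholtz-type projection from $m^\epsilon_s$ onto the gradient field $\rho^\epsilon_s \nabla \phi^\epsilon_s$ can only decrease the weighted $L^2$ norm, the action of $\rho^\epsilon$ is at most $\frac12 W_2^2$. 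The smoothed endpoints $e^{\epsilon\Delta}\rho_i$ do not match the prescribed $\rho_i$, so I would reparametrize on $[\epsilon, 1-\epsilon]$ and prepend and append short smooth paths in $P^\infty(M)$ connecting $\rho_0 \dvol_M$ to $\rho^\epsilon_0 \dvol_M$ and $\rho^\epsilon_1 \dvol_M$ to $\rho_1 \dvol_M$, constructed from the continuity equation with a smooth potential. Since $\rho^\epsilon_i \to \rho_i$ in $C^\infty$ as $\epsilon \downarrow 0$, these correctors can be chosen to contribute vanishing action, and taking $\epsilon \to 0$ completes the argument.

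The main obstacle is the regularization step. On a general Riemannian manifold the heat semigroup is not a pure convolution, so the convexity/Jensen inequality needs care with how $e^{\epsilon\Delta}$ interacts with divergences and how $m^\epsilon_s$ is decomposed into a gradient part plus a divergence-free remainder. A secondary difficulty is ensuring the original $\bar\rho_s$ and $\bar v_s$ are regular enough in $s$ for the heat-smoothed curve to be genuinely smooth in $P^\infty(M)$; this requires either a preliminary mollification in the time variable or an invocation of the known absolute continuity and Borel measurability of displacement interpolations. These are precisely the technical points addressed by \cite{Otto-Westdickenberg (2005)}.
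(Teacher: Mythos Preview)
The paper does not supply its own proof of this statement; it simply quotes it as \cite[Proposition 4.3]{Otto-Westdickenberg (2005)} and moves on. So there is no in-paper argument to compare against, and your proposal is really a reconstruction of the Otto--Westdickenberg proof itself.

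As such a reconstruction, your outline is faithful to what Otto--Westdickenberg actually do. The easy inequality via the length identification (the preceding theorem) together with Cauchy--Schwarz is exactly right. For the hard inequality, heat-semigroup regularization of a Benamou--Brenier pair $(\bar\rho,\bar v)$, the joint convexity of $(\rho,m)\mapsto |m|^2/\rho$ to control the smoothed action, the Helmholtz projection onto gradient fields in the weighted $L^2$ space, and the endpoint-correction step are precisely the ingredients of their argument. You have also correctly flagged the two genuine technical issues: that on a curved manifold $e^{\epsilon\Delta}$ is not a convolution (so the Jensen step must be phrased via the positive heat kernel and the integral form of convexity), and that smoothness in $s$ of the regularized curve requires some care. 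Both are handled in \cite{Otto-Westdickenberg (2005)}, so your sketch, together with that reference, is adequate.
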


In general we cannot replace the
``inf'' in the statement of Theorem \ref{OW}
by ``min'', since the Wasserstein geodesic connecting
$\rho_0 \: \dvol_M$ and $\rho_1 \: \dvol_M$ may not lie entirely
in $P^\infty(M)$.

We now compute the first variation of $E$.

\begin{proposition} \label{prop1}
Let 
\begin{equation} \label{2.10}
\rho \: \dvol_M \: : \: [0,1] \times [t_0 - \epsilon, t_0 + \epsilon]
\rightarrow P^\infty(M)
\end{equation}
be a smooth map, with $\rho \equiv \rho(s,t)$.
Let 
\begin{equation} \label{2.11}
\phi \: : \: [0,1] \times [t_0 - \epsilon, t_0 + \epsilon]
\rightarrow C^\infty(M)
\end{equation}
be a smooth map that satisfies (\ref{2.6}), with
$\phi \equiv \phi(s,t)$.
Then 
\begin{equation} \label{2.12}
\frac{dE}{dt} \Bigg|_{t = t_0} \: = \:
\int_M \phi \: \frac{\partial \rho}{\partial t}
\: \dvol_M \Bigg|_{s=0}^1 \: - \: \int_0^1
\int_M \left( \frac{\partial \phi}{\partial s} \: + \: \frac12 \:
|\nabla \phi|^2 \right) \: \frac{\partial \rho}{\partial t} \: \dvol_M \: ds,
\end{equation}
where the right-hand side is evaluated at time $t = t_0$.
\end{proposition}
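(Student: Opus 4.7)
The plan is a direct Eulerian computation that differentiates $E$ under the integral, then uses the continuity equation (\ref{2.6}) together with its $t$-derivative to convert every variation of $\phi$ into a variation of $\rho$, as the final formula is expressed entirely in terms of $\partial_t \rho$.

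First, I would differentiate (\ref{2.9}) under the integral sign to obtain
\begin{equation*}
\frac{dE}{dt} \: = \: \int_0^1 \int_M \left[ \langle \nabla \phi, \nabla \partial_t \phi \rangle \: \rho \: + \: \frac12 |\nabla \phi|^2 \: \partial_t \rho \right] \: \dvol_M \: ds.
\end{equation*}
Since $\phi$ is only determined up to a function of $(s,t)$ by (\ref{2.6}), the term involving $\partial_t \phi$ must be rewritten in a gauge-invariant way before it can be integrated by parts in $s$. This rewriting is the main content of the proof.

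The key identity is obtained by computing $\frac{\partial}{\partial s} \int_M \phi \: \partial_t \rho \: \dvol_M$ in two ways. On the one hand, by the product rule,
\begin{equation*}
\frac{\partial}{\partial s} \int_M \phi \: \partial_t \rho \: \dvol_M \: = \: \int_M \partial_s \phi \cdot \partial_t \rho \: \dvol_M \: + \: \int_M \phi \cdot \partial_s \partial_t \rho \: \dvol_M.
\end{equation*}
On the other hand, differentiating the continuity equation (\ref{2.6}) in $t$ gives $\partial_s \partial_t \rho = - \nabla^i(\partial_t \rho \: \nabla_i \phi) - \nabla^i(\rho \: \nabla_i \partial_t \phi)$, so integration by parts on $M$ yields
\begin{equation*}
\int_M \phi \cdot \partial_s \partial_t \rho \: \dvol_M \: = \: \int_M |\nabla \phi|^2 \: \partial_t \rho \: \dvol_M \: + \: \int_M \langle \nabla \phi, \nabla \partial_t \phi \rangle \: \rho \: \dvol_M.
\end{equation*}
Solving these for $\int_M \langle \nabla \phi, \nabla \partial_t \phi \rangle \rho \: \dvol_M$ expresses the troublesome $\partial_t \phi$-term as $\frac{\partial}{\partial s} \int_M \phi \: \partial_t \rho \: \dvol_M$ minus $\int_M (\partial_s \phi + |\nabla \phi|^2) \: \partial_t \rho \: \dvol_M$.

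Substituting back into the formula for $dE/dt$ and integrating the total $s$-derivative from $s=0$ to $s=1$ produces the boundary contribution $\int_M \phi \: \partial_t \rho \: \dvol_M \big|_{s=0}^{1}$, while the remaining bulk terms combine $- \int_M(\partial_s \phi + |\nabla \phi|^2)\partial_t \rho$ with the original $+ \frac12 |\nabla \phi|^2 \partial_t \rho$ to give precisely $- \int_M (\partial_s \phi + \tfrac12 |\nabla \phi|^2) \partial_t \rho$, yielding (\ref{2.12}). The nontrivial step is step three: recognizing that the mixed derivative $\partial_s \partial_t \rho$ is governed by the linearization of (\ref{2.6}), which produces exactly the combination $|\nabla \phi|^2 \partial_t \rho + \langle \nabla \phi, \nabla \partial_t \phi \rangle \rho$ needed to eliminate the $\partial_t \phi$-dependence. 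I expect no further obstacles, since the ambiguity $\phi \mapsto \phi + f(s,t)$ is automatically harmless: $\partial_t \rho$ integrates to zero over $M$ (as $\rho$ remains a probability density), and $\partial_s \phi$ and $|\nabla \phi|^2$ are both gauge-invariant up to an irrelevant function of $(s,t)$ whose contribution integrates out.
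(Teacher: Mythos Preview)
Your proposal is correct and follows essentially the same route as the paper: differentiate $E$ under the integral, then use the $t$-derivative of the continuity equation (\ref{2.6}) tested against $\phi$ to obtain the identity $\int_M \phi\,\partial_s\partial_t\rho = \int_M |\nabla\phi|^2\,\partial_t\rho + \int_M \langle\nabla\phi,\nabla\partial_t\phi\rangle\,\rho$ (the paper's (2.16)), and finally integrate by parts in $s$. The only cosmetic difference is that the paper derives (2.16) via a weak formulation with a test function $f$ and then sets $f=\phi$, whereas you differentiate (\ref{2.6}) pointwise and integrate by parts; the substance is identical.
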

\begin{proof}
We have
\begin{equation} \label{2.13}
\frac{dE}{dt} \: = \: \int_0^1 \int_M \left( \langle \nabla \phi,
\nabla \frac{\partial \phi}{\partial t} \rangle \: \rho \: + \: 
\frac12 \: |\nabla \phi|^2 \: \frac{\partial \rho}{\partial t} \right)
\: \dvol_M \: ds. 
\end{equation}
For a fixed $f \in C^\infty(M)$, from (\ref{2.6}),
\begin{equation} \label{2.14}
\int_M f \: \frac{\partial \rho}{\partial s} \: \dvol_M \: = \:
\int_M \langle \nabla f, \nabla \phi \rangle \: \rho \: \dvol_M.
\end{equation}
Hence
\begin{equation} \label{2.15}
\int_M f \: \frac{\partial^2 \rho}{\partial s \partial t} \: \dvol_M \: = \:
\int_M \left( \langle \nabla f, \nabla \frac{\partial \phi}{\partial t} 
\rangle \: \rho \: + \:
\langle \nabla f, \nabla \phi \rangle \: \frac{\partial \rho}{\partial t}
\right)
\: \dvol_M.
\end{equation}
Taking $f = \phi$ gives
\begin{equation} \label{2.16}
\int_M \phi \: \frac{\partial^2 \rho}{\partial s \partial t} \: \dvol_M \: = \:
\int_M \left( \langle \nabla \phi, \nabla \frac{\partial \phi}{\partial t} 
\rangle \: \rho \: + \:
| \nabla \phi|^2 \: \frac{\partial \rho}{\partial t}
\right)
\: \dvol_M.
\end{equation}
Equations (\ref{2.13}) and (\ref{2.16}) give
\begin{align} \label{2.17}
\frac{dE}{dt} \: & = \: \int_0^1 \int_M \left( 
\phi \: \frac{\partial^2 \rho}{\partial s \partial t} \: - \: \frac12 \:
| \nabla \phi|^2 \: \frac{\partial \rho}{\partial t}
\right)
\: \dvol_M \: ds \\
 & = \: \int_0^1 \int_M \left( \frac{\partial}{\partial s} \left(
\phi \: \frac{\partial \rho}{\partial t} \right) \: - \:
\left(\frac{\partial \phi}{\partial s}  \: + \: \frac12 \:
| \nabla \phi|^2 \right) \: \frac{\partial \rho}{\partial t}
\right)
\: \dvol_M \: ds, \notag
\end{align}
from which the proposition follows.
\end{proof}

From (\ref{2.12}), the Euler-Lagrange equation for $E$ is 
\begin{equation} \label{2.18}
\frac{\partial \phi}{\partial s} \: = \: - \: \frac12 \: |\nabla \phi|^2
\: + \: \alpha(s),
\end{equation}
where $\alpha \in C^\infty([0,1])$. Changing $\phi$ by a spatially-constant
function, we can assume that $\alpha = 0$, so the Euler-Lagrange equation
for $E$ becomes the Hamilton-Jacobi equation
\begin{equation} \label{2.19}
\frac{\partial \phi}{\partial s} \: = \: - \: \frac12 \: |\nabla \phi|^2.
\end{equation}
If a geodesic in $P(M)$ happens to be a smooth curve in
$P^\infty(M)$ then it will satisfy (\ref{2.19}).

For any $0 \le s^\prime < s^{\prime \prime} \le 1$, the viscosity solution of 
(\ref{2.19}) satisfies
\begin{equation} \label{2.20}
\phi(s^{\prime \prime})(m^{\prime \prime}) 
\: = \: \inf_{m^\prime \in M} \left( \phi(s^\prime)(m^\prime) \: + \:
\frac{d_M(m^\prime, m^{\prime \prime})^2}{s^{\prime \prime} - s^\prime} 
\right). 
\end{equation}
Then the solution of (\ref{2.6}) satisfies 
\begin{equation} \label{2.21}
\rho(s^{\prime \prime}) \dvol_M \: = \: 
(F_{s^\prime, s^{\prime \prime}})_* (\rho(s^\prime) \: \dvol_M), 
\end{equation}
where the transport map
$F_{s^\prime, s^{\prime \prime}} \: : \: M \rightarrow M$ is given by
\begin{equation} \label{2.22}
F_{s^\prime, s^{\prime \prime}}(m^\prime) \: = \: \exp_{m^\prime} 
\left( (s^{\prime \prime} - s^\prime) \nabla_{m^\prime}
\phi(s^\prime) \right).
\end{equation}

We now give some simple results in the Otto calculus.

\begin{proposition} \label{prop2}
Assuming (\ref{2.6}) and (\ref{2.19}),
we have
\begin{equation} \label{2.23}
\frac{d}{ds} \int_M \phi \: \rho \: \dvol_M \: 
= \:
\frac12 \: \int_M |\nabla \phi|^2 \: \rho \: \dvol_M
\end{equation}
and
\begin{equation} \label{2.24}
\frac12 \: \frac{d}{ds} 
 \int_M |\nabla \phi|^2 \: \rho \: \dvol_M \: = \: 0.
\end{equation}
\end{proposition}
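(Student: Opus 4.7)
The plan is to prove both identities by direct computation, using the continuity equation (\ref{2.6}) and the Hamilton--Jacobi equation (\ref{2.19}) together with a single integration by parts on the closed manifold $M$. Both statements are essentially the classical conservation laws for Hamilton--Jacobi flow: the first identifies the rate of change of $\int \phi \, \rho \, \dvol_M$ with the Lagrangian (which here coincides with $\frac12 |\nabla \phi|^2$), and the second expresses conservation of the fiberwise Hamiltonian along the flow.

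For the first identity, I would expand
\begin{equation*}
\frac{d}{ds} \int_M \phi \, \rho \, \dvol_M \: = \:
\int_M \frac{\partial \phi}{\partial s} \, \rho \, \dvol_M \: + \:
\int_M \phi \, \frac{\partial \rho}{\partial s} \, \dvol_M.
\end{equation*}
Substituting (\ref{2.19}) into the first term produces $-\frac12 \int_M |\nabla \phi|^2 \, \rho \, \dvol_M$. Substituting (\ref{2.6}) into the second term and integrating by parts, as in the derivation of (\ref{2.3}), yields $\int_M \langle \nabla \phi, \nabla \phi \rangle \, \rho \, \dvol_M = \int_M |\nabla \phi|^2 \, \rho \, \dvol_M$. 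Adding the two contributions gives (\ref{2.23}).

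For the second identity, I would similarly expand
\begin{equation*}
\frac12 \frac{d}{ds} \int_M |\nabla \phi|^2 \, \rho \, \dvol_M \: = \:
\int_M \langle \nabla \phi, \nabla \tfrac{\partial \phi}{\partial s} \rangle \, \rho \, \dvol_M \: + \:
\frac12 \int_M |\nabla \phi|^2 \, \frac{\partial \rho}{\partial s} \, \dvol_M.
\end{equation*}
Differentiating (\ref{2.19}) spatially gives $\nabla \frac{\partial \phi}{\partial s} = -\frac12 \nabla |\nabla \phi|^2$, so the first term equals $-\frac12 \int_M \langle \nabla \phi, \nabla |\nabla \phi|^2 \rangle \, \rho \, \dvol_M$, which after integration by parts becomes $\frac12 \int_M |\nabla \phi|^2 \, \nabla^i(\rho \nabla_i \phi) \, \dvol_M$. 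Applying (\ref{2.6}) to the second term gives $-\frac12 \int_M |\nabla \phi|^2 \, \nabla^i(\rho \nabla_i \phi) \, \dvol_M$, and the two contributions cancel, yielding (\ref{2.24}).

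There is no serious obstacle here; the only thing to watch is the bookkeeping of signs in the integration by parts and the fact that no boundary terms appear because $M$ is closed. Both identities serve as warmups for the time-dependent analogues in later sections, where additional terms arising from the evolution of $g(\tau)$ and the modified continuity / Hamilton--Jacobi equations will need to be tracked.
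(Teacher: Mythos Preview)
Your proof is correct and follows essentially the same approach as the paper: differentiate under the integral, substitute the Hamilton--Jacobi equation (\ref{2.19}) and the continuity equation (\ref{2.6}), and observe that a single integration by parts produces the claimed identities. The only cosmetic difference is that in (\ref{2.24}) you integrate by parts the first term while the paper leaves both terms as they stand and notes the cancellation directly (equivalently, integrates by parts the second term).
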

\begin{proof}
First,
\begin{align} \label{2.25}
\frac{d}{ds} \int_M \phi \: \rho \: \dvol_M \: & = \:
- \: \frac12 \: \int_M |\nabla \phi|^2 \: \rho \: \dvol_M \: - \:
\int_M \phi \: \nabla^i ( \rho \nabla_i \phi) \: \dvol_M \\
& = \:
\frac12 \: \int_M |\nabla \phi|^2 \: \rho \: \dvol_M \notag.
\end{align}
Next, using (\ref{2.18}),
\begin{align} \label{2.26}
& \frac12 \: \frac{d}{ds} 
 \int_M |\nabla \phi|^2 \: \rho \: \dvol_M \: = \\
& - \: \frac12
 \int_M \langle \nabla \phi,  \: \nabla (|\nabla \phi|^2) \rangle \:
\rho \: \dvol_M \: - \: \frac12
 \int_M |\nabla \phi|^2 \: \nabla^i ( \rho \nabla_i \phi) \: \dvol_M 
\: = 0 \notag.
\end{align}
This proves the proposition.
\end{proof}

Equation (\ref{2.24}) is just the statement that a geodesic in $P^\infty(M)$ has
constant speed. Equation (\ref{2.23}) says that
$\int_M \phi \: \rho \: \dvol_M$ is proportionate to the arc length
along the geodesic.

The (negative)
entropy ${\mathcal E} \: : \: P^\infty(M) \rightarrow \R$ is given by
\begin{equation} \label{2.27}
{\mathcal E}(\rho \: \dvol_M) \: = \: \int_M \rho \: \log(\rho) \: \dvol_M.
\end{equation}
We now compute its first two derivatives along a curve in $P^\infty(M)$.

\begin{proposition} \label{prop3}
Assuming (\ref{2.6}), 
we have
\begin{equation} \label{2.28}
\frac{d {\mathcal E}}{ds} \: = \:
\int_M \langle
\nabla \phi, \nabla \rho \rangle \: \dvol_M \: = \:
- \: \int_M 
\nabla^2 \phi \: \rho \: \dvol_M
\end{equation}
and
\begin{align} \label{2.29}
\frac{d^2 {\mathcal E}}{ds^2} \: & = \:
- \: \int_M \ \left( \frac{\partial \phi}{\partial s} \: + \:
\frac12 \: |\nabla \phi|^2 \right) \: \nabla^2 \rho \: \dvol_M \: + \\
& \: \: \: \: \: \: 
\int_M \left(
|\Hess \phi|^2 \: + \: \Ric(\nabla \phi,\nabla \phi) \right)
\: \rho \: \dvol_M. \notag
\end{align}
\end{proposition}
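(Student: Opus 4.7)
The plan is to compute the two derivatives by direct differentiation, using the continuity equation (\ref{2.6}), integration by parts on the closed manifold $M$, and the Bochner formula to extract the Ricci curvature term.

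For the first derivative, I would use the chain rule $\frac{d}{d\rho}(\rho \log \rho) = 1 + \log \rho$ together with (\ref{2.6}) to write
\[
\frac{d{\mathcal E}}{ds} \: = \: \int_M (1 + \log \rho) \: \frac{\partial \rho}{\partial s} \: \dvol_M \: = \: - \int_M (1 + \log \rho) \: \nabla^i(\rho \nabla_i \phi) \: \dvol_M.
\]
Integrating by parts moves the divergence onto $\log \rho$, giving $\int_M \langle \nabla \phi, \nabla \rho \rangle \: \dvol_M$, since $\rho \: \nabla \log \rho \: = \: \nabla \rho$ and the constant $1$ contributes nothing after the next integration by parts. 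A second integration by parts on $\langle \nabla \phi, \nabla \rho \rangle$ yields $- \int_M \rho \: \nabla^2 \phi \: \dvol_M$, establishing (\ref{2.28}).

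For the second derivative, I would differentiate $- \int_M \rho \: \nabla^2 \phi \: \dvol_M$ with respect to $s$, producing
\[
\frac{d^2 {\mathcal E}}{ds^2} \: = \: - \: \int_M \frac{\partial \rho}{\partial s} \: \nabla^2 \phi \: \dvol_M \: - \: \int_M \rho \: \nabla^2 \frac{\partial \phi}{\partial s} \: \dvol_M.
\]
In the first term I would substitute (\ref{2.6}) and integrate by parts to obtain $- \int_M \rho \: \langle \nabla \phi, \nabla \nabla^2 \phi \rangle \: \dvol_M$. In the second term, two integrations by parts (justified since $M$ is closed) exchange $\rho$ and $\frac{\partial \phi}{\partial s}$, yielding $- \int_M \frac{\partial \phi}{\partial s} \: \nabla^2 \rho \: \dvol_M$.

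The key step is to apply the Bochner identity
\[
\frac12 \: \nabla^2 |\nabla \phi|^2 \: = \: |\Hess \phi|^2 \: + \: \langle \nabla \phi, \nabla \nabla^2 \phi \rangle \: + \: \Ric(\nabla \phi, \nabla \phi)
\]
to rewrite $-\int_M \rho \: \langle \nabla \phi, \nabla \nabla^2 \phi \rangle \: \dvol_M$ as $-\frac12 \int_M \rho \: \nabla^2 |\nabla \phi|^2 \: \dvol_M + \int_M (|\Hess \phi|^2 + \Ric(\nabla \phi, \nabla \phi)) \: \rho \: \dvol_M$. Another double integration by parts in the first piece converts $\rho \: \nabla^2 |\nabla \phi|^2$ into $|\nabla \phi|^2 \: \nabla^2 \rho$. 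Combining with the $\frac{\partial \phi}{\partial s}$ term then gives the claimed $-\int_M \left(\frac{\partial \phi}{\partial s} + \frac12 |\nabla \phi|^2\right) \nabla^2 \rho \: \dvol_M$ plus the $\Hess$--$\Ric$ integral, completing (\ref{2.29}). The only real subtlety is organizing the several integrations by parts so that the Bochner term appears in the right form; once that is done, the identity follows mechanically.
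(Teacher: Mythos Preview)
Your proposal is correct and follows essentially the same approach as the paper: differentiate directly, use the continuity equation (\ref{2.6}), integrate by parts, and apply the Bochner identity. The only cosmetic difference is that the paper differentiates the form $\int_M \langle \nabla\phi, \nabla\rho\rangle \,\dvol_M$ rather than $-\int_M \rho\,\nabla^2\phi\,\dvol_M$, and adds and subtracts $\frac12|\nabla\phi|^2$ at the outset to isolate the combination $\frac{\partial\phi}{\partial s}+\frac12|\nabla\phi|^2$ immediately; the subsequent manipulations are equivalent to yours.
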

\begin{proof}
First,
\begin{equation} \label{2.30}
\frac{d {\mathcal E}}{ds} \: = \: - \: 
\int_M \left( \log(\rho) + 1 \right) \: \nabla^i (\rho \: \nabla_i \phi) \:
\dvol_M \: = \: 
\int_M \langle
\nabla \phi, \nabla \rho \rangle \: \dvol_M.
\end{equation}
Then
\begin{align} \label{2.31}
\frac{d^2 {\mathcal E}}{ds^2} \: & = \:
\int_M \left\langle \nabla \left( \frac{\partial \phi}{\partial s} \: + \:
\frac12 \: |\nabla \phi|^2 \right), \nabla \rho \right\rangle \: \dvol_M \: - \\
& \: \: \: \: \: \: \frac12 \:
\int_M \left\langle \nabla \left(
|\nabla \phi|^2 \right), \nabla \rho \right\rangle \: \dvol_M
\: + \: \int_M \left\langle \nabla \phi, \nabla \left( - \nabla^i (
\rho \nabla_i \phi) \right) \right\rangle \: \dvol_M \notag \\
& = \:
- \: \int_M \ \left( \frac{\partial \phi}{\partial s} \: + \:
\frac12 \: |\nabla \phi|^2 \right) \: \nabla^2 \rho \: \dvol_M \: + \notag \\
& \: \: \: \: \: \: \frac12 \:
\int_M \nabla^2 \left(
|\nabla \phi|^2 \right) \: \rho \: \dvol_M
\: - \: \int_M \left\langle \nabla \nabla^2 \phi, \nabla  \phi
\right\rangle \: \rho \: \dvol_M \notag \\
& = \:
- \: \int_M \ \left( \frac{\partial \phi}{\partial s} \: + \:
\frac12 \: |\nabla \phi|^2 \right) \: \nabla^2 \rho \: \dvol_M \: + \notag \\
& \: \: \: \: \: \: 
\int_M \left(
|\Hess \phi|^2 \: + \: \Ric(\nabla \phi,\nabla \phi) \right)
\: \rho \: \dvol_M, \notag
\end{align}
where we used the Bochner identity in the last line.
This proves the proposition.
\end{proof}

\begin{corollary} \label{cor1}
Assuming (\ref{2.6}) and (\ref{2.19}), if $\Ric(M, g) \ge 0$ then 
$\frac{d^2 {\mathcal E}}{ds^2} \: \ge \: 0$.
That is, ${\mathcal E}$ is convex along geodesics in $P^\infty(M)$.
\end{corollary}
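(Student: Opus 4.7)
The plan is to obtain this as an immediate consequence of Proposition \ref{prop3} combined with the Hamilton-Jacobi equation (\ref{2.19}). The formula (\ref{2.29}) for $\frac{d^2 \mathcal{E}}{ds^2}$ splits naturally into two pieces: a term involving $\frac{\partial \phi}{\partial s} + \frac12 |\nabla \phi|^2$ against $\nabla^2 \rho$, and a Bochner-type term involving $|\Hess \phi|^2 + \Ric(\nabla \phi, \nabla \phi)$ weighted by $\rho$.

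First I would invoke the Hamilton-Jacobi equation (\ref{2.19}), which asserts precisely that $\frac{\partial \phi}{\partial s} + \frac12 |\nabla \phi|^2 = 0$ along the geodesic. This kills the first integral in (\ref{2.29}) identically, leaving
\begin{equation*}
\frac{d^2 \mathcal{E}}{ds^2} \: = \: \int_M \left( |\Hess \phi|^2 \: + \: \Ric(\nabla \phi, \nabla \phi) \right) \: \rho \: \dvol_M.
\end{equation*}

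Then I would observe that, pointwise on $M$, the integrand is nonnegative: the Hessian-squared term $|\Hess \phi|^2$ is manifestly $\geq 0$, the Ricci term $\Ric(\nabla \phi, \nabla \phi)$ is $\geq 0$ by the hypothesis $\Ric(M,g) \geq 0$, and $\rho > 0$ since $\rho \dvol_M \in P^\infty(M)$. Integrating over $M$ gives $\frac{d^2 \mathcal{E}}{ds^2} \geq 0$, which is the desired convexity of $\mathcal{E}$ along the geodesic.

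There is no real obstacle here; the work has already been done in Propositions \ref{prop1}--\ref{prop3}. The only minor subtlety is to make sure one is genuinely on a geodesic of $P^\infty(M)$ so that the Euler-Lagrange equation (\ref{2.18}) holds and the constant $\alpha(s)$ can be absorbed into $\phi$ to yield (\ref{2.19}) exactly. Once that is in place, the corollary is a one-line consequence of substituting (\ref{2.19}) into (\ref{2.29}).
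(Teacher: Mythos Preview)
Your proposal is correct and is exactly the argument the paper has in mind: the corollary is stated immediately after Proposition~\ref{prop3} with no written proof, since substituting (\ref{2.19}) into (\ref{2.29}) kills the first integral and leaves the manifestly nonnegative Bochner term $\int_M (|\Hess\phi|^2 + \Ric(\nabla\phi,\nabla\phi))\,\rho\,\dvol_M$.
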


\begin{remark}
In view of Proposition \ref{prop2}, Corollary \ref{cor1}
would still hold if we replaced
${\mathcal E}(\rho(s) \: \dvol_M)$ by
${\mathcal E}(\rho(s) \: \dvol_M) \: \pm \: \int_M \phi(s) \: \rho(s) \: \dvol_M$.
This modification will be crucial in later sections.
\end{remark}

Corollary \ref{cor1} was proven in \cite{Otto-Villani (2000)}.
The extension of Corollary \ref{cor1} to $P(M)$ was proven in
\cite{Cordero-Erausquin-McCann-Schmuckenschlaeger (2001)}.

We now give a slight refinement of the first variation result.

\begin{proposition} \label{prop4}
Under the assumptions of Proposition \ref{prop1},
\begin{align} \label{2.32}
\frac{dE}{dt} \Bigg|_{t = t_0} \: = \:
& \int_M \phi \: 
\left( \frac{\partial \rho}{\partial t} \: - \: \nabla^2 \rho \right)
\: \dvol_M \Bigg|_{s=0}^1 \: - \\
& \int_0^1
\int_M \left( \frac{\partial \phi}{\partial s} \: + \: \frac12 \:
|\nabla \phi|^2 \right) \: 
\left( \frac{\partial \rho}{\partial t} \: - \: \nabla^2 \rho \right) \: 
\dvol_M \: ds \: - \notag \\
& \int_0^1 \int_M \left(
|\Hess \phi|^2 \: + \: \Ric(\nabla \phi,\nabla \phi) \right)
\: \rho \: \dvol_M \: ds, \notag
\end{align}
where the right-hand side is evaluated at time $t = t_0$.
\end{proposition}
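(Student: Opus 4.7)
The plan is to derive Proposition \ref{prop4} directly from Proposition \ref{prop1} by reorganizing the right-hand side using the entropy identities already established in Proposition \ref{prop3}. Writing $\Psi = \frac{\partial\phi}{\partial s} + \frac12 |\nabla\phi|^2$ for brevity, I observe that the expression in Proposition \ref{prop4} differs from the expression in Proposition \ref{prop1} by precisely
\begin{equation*}
D \: = \: -\int_M \phi\,\nabla^2\rho\,\dvol_M\Big|_{s=0}^1 \: + \: \int_0^1\!\!\int_M \Psi\,\nabla^2\rho\,\dvol_M\,ds \: - \: \int_0^1\!\!\int_M \bigl(|\Hess\phi|^2 + \Ric(\nabla\phi,\nabla\phi)\bigr)\rho\,\dvol_M\,ds,
\end{equation*}
so the task reduces to proving that $D$ vanishes, with everything evaluated at $t = t_0$.

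To show $D = 0$, I would apply Proposition \ref{prop3} at the fixed time $t = t_0$ to the $s$-curve $\rho(\cdot, t_0)\,\dvol_M$ in $P^\infty(M)$. The second identity of Proposition \ref{prop3} says that the integrand appearing in the last two terms of $D$ is exactly $-\frac{d^2\mathcal{E}}{ds^2}$, so after integrating in $s$ those two terms together equal $-\frac{d\mathcal{E}}{ds}\big|_{s=0}^1$. Meanwhile, the first identity of Proposition \ref{prop3}, combined with the self-adjointness of the Laplacian on the closed manifold $M$, gives $\frac{d\mathcal{E}}{ds} = -\int_M \nabla^2\phi\,\rho\,\dvol_M = -\int_M \phi\,\nabla^2\rho\,\dvol_M$. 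Hence the first term of $D$ is $+\frac{d\mathcal{E}}{ds}\big|_{s=0}^1$, and the two contributions cancel.

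I do not expect any real obstacle. The argument is purely algebraic and relies only on identities that have already been proved; it uses neither the Hamilton--Jacobi equation (\ref{2.19}) nor any geodesy, which matches the hypotheses of Proposition \ref{prop4} (only (\ref{2.6}) is assumed). The conceptual point worth recording is that the reshuffling is set up so that $\frac{\partial\rho}{\partial t}$ gets replaced everywhere by $\frac{\partial\rho}{\partial t} - \nabla^2\rho$. This is the natural combination to monitor, since it vanishes on solutions of the backward heat equation on $P^\infty(M)$; this anticipates the role the refined formula will play when combined with Ricci-flow evolutions later in the paper.
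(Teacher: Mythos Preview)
Your proposal is correct and is essentially the paper's own proof: the paper integrates (\ref{2.29}) in $s$ to obtain the identity you call $D=0$ (recorded there as (\ref{2.33})), then adds it to (\ref{2.12}). Your only extra step is making explicit the self-adjointness of $\nabla^2$ needed to pass from $-\int_M (\nabla^2\phi)\,\rho\,\dvol_M$ to $-\int_M \phi\,\nabla^2\rho\,\dvol_M$, which the paper leaves implicit.
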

\begin{proof}
Integrating (\ref{2.29}) with respect to $s$ gives
\begin{align} \label{2.33}
- \: \int_M \phi \: \nabla^2 \rho \: \dvol_M \Bigg|_{s=0}^1 \: & = \:
- \: \int_0^1 \int_M \ \left( \frac{\partial \phi}{\partial s} \: + \:
\frac12 \: |\nabla \phi|^2 \right) \: \nabla^2 \rho \: \dvol_M \: ds \: + \\
& \: \: \: \: \: \: \int_0^1
\int_M \left(
|\Hess \phi|^2 \: + \: \Ric(\nabla \phi,\nabla \phi) \right)
\: \rho \: \dvol_M \: ds. \notag
\end{align}
The proposition follows from combining (\ref{2.12}) and (\ref{2.33}).
\end{proof}

\begin{corollary} \label{cor2} 
\cite{Otto (2001),Otto-Westdickenberg (2005),Sturm-von Renesse (2005)}
Suppose that
$\Ric(M, g) \ge 0$.
Let $e^{t \nabla^2}$ be the heat flow on $P^\infty(M)$.
Then for $\mu_0, \mu_1 \in P^\infty(M)$ and $t \ge 0$,
\begin{equation} \label{2.34}
W_2 \left( e^{t \nabla^2} \mu_0, e^{t \nabla^2} \mu_1 \right) \: \le \:
W_2(\mu_0, \mu_1).
\end{equation}
\end{corollary}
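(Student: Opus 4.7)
The plan is to show that the heat flow decreases the action functional $E$ along any smooth curve in $P^\infty(M)$, and then to combine this with the Benamou--Brenier characterization of $\frac{1}{2} W_2^2$ as the infimum of such actions (Theorem \ref{OW}).

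Concretely, fix $\mu_0, \mu_1 \in P^\infty(M)$ and $t > 0$, and let $\epsilon > 0$. By Theorem \ref{OW}, there is a smooth path $c \: : \: [0,1] \rightarrow P^\infty(M)$ with $c(0) = \mu_0$, $c(1) = \mu_1$, and $E(c) < \frac{1}{2} W_2(\mu_0, \mu_1)^2 + \epsilon$. Evolve this path under the heat flow in a second parameter: set $\rho(s, r) \: \dvol_M \: = \: e^{r \nabla^2} c(s)$ for $r \in [0, t]$, so that $\partial_r \rho \: = \: \nabla^2 \rho$ identically. Since $e^{r \nabla^2}$ is smoothing and positivity-preserving, each $c_r(s) \: = \: \rho(s,r) \: \dvol_M$ is a smooth curve in $P^\infty(M)$ from $e^{r \nabla^2} \mu_0$ to $e^{r \nabla^2} \mu_1$, and at each $r$ we may choose $\phi(s, r) \in C^\infty(M)$ solving the continuity equation (\ref{2.6}), so that the hypotheses of Propositions \ref{prop1} and \ref{prop4} are met.

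Now apply Proposition \ref{prop4} at an arbitrary $r = r_0 \in [0, t]$. Because $\partial_r \rho \: - \: \nabla^2 \rho$ vanishes identically, both the boundary term at $s = 0, 1$ and the bulk term involving $\partial_s \phi \: + \: \frac{1}{2} |\nabla \phi|^2$ drop out, leaving
\begin{equation*}
\frac{dE(c_r)}{dr} \Bigg|_{r = r_0} \: = \: - \: \int_0^1 \int_M \left( |\Hess \phi|^2 \: + \: \Ric(\nabla \phi, \nabla \phi) \right) \rho \: \dvol_M \: ds \: \le \: 0,
\end{equation*}
where the inequality uses $\Ric(M, g) \ge 0$. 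Integrating from $r = 0$ to $r = t$ gives $E(c_t) \le E(c_0) = E(c)$. Invoking Theorem \ref{OW} in the other direction, now as a lower bound applied to the path $c_t$ joining $e^{t \nabla^2} \mu_0$ to $e^{t \nabla^2} \mu_1$, we obtain
\begin{equation*}
\tfrac{1}{2} \: W_2 \left( e^{t \nabla^2} \mu_0, \: e^{t \nabla^2} \mu_1 \right)^2 \: \le \: E(c_t) \: \le \: E(c) \: < \: \tfrac{1}{2} \: W_2(\mu_0, \mu_1)^2 \: + \: \epsilon.
\end{equation*}
Since $\epsilon$ is arbitrary the corollary follows.

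The main thing I would want to verify carefully is the regularity at the ``join'' between the two uses of Theorem \ref{OW}: namely that the near-minimizing curve $c$ at $r = 0$ can be taken smooth enough for $(s, r) \mapsto \rho(s, r)$ to be jointly smooth and for the associated potential $\phi(\cdot, r)$ to depend smoothly on $r$. This is however a soft point, since $c$ is smooth in $s$ by construction and $e^{r\nabla^2}$ supplies arbitrary $r$-regularity, with uniform positivity on the compact parameter set $[0,1] \times [0,t]$; there is no serious obstacle.
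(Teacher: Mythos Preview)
Your proof is correct and follows essentially the same route as the paper's: choose a near-minimizing smooth curve via Theorem~\ref{OW}, evolve it by the heat flow, use Proposition~\ref{prop4} (with $\partial_r\rho-\nabla^2\rho=0$ killing the first two terms) together with $\Ric\ge 0$ to see that $E(c_r)$ is nonincreasing, and then bound $\tfrac12 W_2^2$ from below by $E(c_t)$ again via Theorem~\ref{OW}. The paper's proof is the same argument, only more tersely written; your added remark on joint smoothness of $(s,r)\mapsto\rho(s,r)$ and of $\phi$ is a reasonable point to flag but, as you note, not a genuine obstacle.
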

\begin{proof}
Using Theorem \ref{OW},
given $\epsilon > 0$, choose a smooth curve
$c \: : \: [0, 1] \rightarrow P^\infty(M)$ with $c(0) = \mu_0$ and
$c(1) = \mu_1$ so that
$E(c) \: \le \: \frac12 \: W_2(\mu_0, \mu_1)^2 \: + \: \epsilon$.
Define $c_t \: : \: [0,1] \rightarrow P^\infty(M)$ by
$c_t(s) \: = \: e^{t \nabla^2} c(s)$. By
Proposition \ref{prop4}, $E(c_t)$ is nonincreasing in $t$.
Hence $\frac12 \: W_2(c_t(0), c_t(1))^2 \: \le \: E(c_t) \: \le \: 
E(c_0) \: \le \: \frac12 \: W_2(\mu_0, \mu_1)^2 \: + \: \epsilon$. 
As $\epsilon$ was arbitrary, the corollary follows.
\end{proof}

We recall that $n = \dim(M)$. We now give a new convexity
result concerning Wasserstein geodesics.

\begin{proposition} \label{prop5}
If $\Ric(M,g) \ge 0$ then
$s {\mathcal E} \: + \: n s \log(s)$ is convex
along a Wasserstein geodesic in $P^\infty(M)$, defined for
$s \in [0, 1]$.
\end{proposition}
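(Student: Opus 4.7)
The plan is to directly compute the second derivative of $f(s) := s\,{\mathcal E}(\rho(s)\dvol_M) + ns\log(s)$ using Proposition \ref{prop3}, drop the nonnegative Ricci term, and recognize what remains as a perfect square via two applications of Cauchy--Schwarz.

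First I would differentiate: writing $\psi(s) := {\mathcal E}(\rho(s)\dvol_M)$,
\begin{equation*}
f'(s) \: = \: \psi(s) + s\psi'(s) + n\log(s) + n, \qquad
f''(s) \: = \: 2\psi'(s) + s\psi''(s) + \frac{n}{s}.
\end{equation*}
Since a Wasserstein geodesic in $P^\infty(M)$ is in particular a critical point of $E$, the associated potential $\phi(s)$ satisfies the Hamilton--Jacobi equation (\ref{2.19}). Substituting into Proposition \ref{prop3}, the $\frac{\partial \phi}{\partial s} + \frac{1}{2}|\nabla\phi|^2$ term vanishes, so
\begin{equation*}
\psi'(s) \: = \: - \int_M \nabla^2\phi \; \rho \; \dvol_M, \qquad
\psi''(s) \: = \: \int_M \bigl(|\Hess \phi|^2 + \Ric(\nabla\phi,\nabla\phi)\bigr)\rho\;\dvol_M.
\end{equation*}

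Next, using $\Ric(M,g)\ge 0$ to discard the Ricci term, and letting $A(s) := \int_M \nabla^2\phi\;\rho\;\dvol_M = -\psi'(s)$, it suffices to show
\begin{equation*}
-2A(s) \: + \: s \int_M |\Hess \phi|^2 \, \rho \, \dvol_M \: + \: \frac{n}{s} \: \geq \: 0.
\end{equation*}
The key algebraic input is the pointwise trace inequality $|\Hess\phi|^2 \ge \frac{1}{n}(\nabla^2\phi)^2$ for symmetric $n\times n$ matrices, combined with the probabilistic Cauchy--Schwarz bound
\begin{equation*}
\int_M (\nabla^2 \phi)^2 \; \rho \; \dvol_M \: \geq \: \left( \int_M \nabla^2 \phi \; \rho \; \dvol_M \right)^2 \: = \: A(s)^2,
\end{equation*}
where we used $\int_M \rho\,\dvol_M = 1$. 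Chaining these gives $\int_M |\Hess\phi|^2\,\rho\,\dvol_M \geq A(s)^2/n$, so
\begin{equation*}
f''(s) \: \geq \: \frac{s A(s)^2}{n} - 2A(s) + \frac{n}{s} \: = \: \left( \frac{\sqrt{s}\,A(s)}{\sqrt{n}} - \sqrt{\frac{n}{s}} \right)^2 \: \geq \: 0.
\end{equation*}

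I do not expect a serious obstacle here: the result is essentially a dimensional refinement of Corollary \ref{cor1}, and all tools needed (Proposition \ref{prop3}, the Hamilton--Jacobi equation, the Bochner identity implicit in it, and elementary Cauchy--Schwarz) are already in place. The mild subtlety is that a Wasserstein geodesic need not stay in $P^\infty(M)$ in general, but the hypothesis restricts us to one that does, so $\phi$ and $\rho$ are smooth and all integrations by parts in Proposition \ref{prop3} are justified. If desired, one can repackage the inequality as the statement that the ``displacement $K$-convexity'' exponent improves from $0$ to $1/n$ (i.e.\ $CD(0,n)$), which is the standard form of the Bishop--Gromov-type convexity and explains the $n\log(s)$ correction.
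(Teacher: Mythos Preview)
Your proof is correct and follows essentially the same route as the paper: both compute $f''(s)=s\psi''+2\psi'+n/s$, invoke Proposition~\ref{prop3} with the Hamilton--Jacobi equation, drop the Ricci term, and then combine the pointwise trace inequality $|\Hess\phi|^2\ge (\nabla^2\phi)^2/n$ with Jensen/Cauchy--Schwarz on the probability measure $\rho\,\dvol_M$. The only cosmetic difference is that the paper packages the last step as the discriminant condition $(\mathcal E')^2\le n\,\mathcal E''$ for the quadratic $s^2\psi''+2s\psi'+n$, whereas you complete the square explicitly; these are equivalent.
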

\begin{proof}
From (\ref{2.29}), $\frac{d^2 {\mathcal E}}{ds^2} \ge 0$.
As 
\begin{equation} \label{2.35}
\frac{d^2}{ds^2} \left( 
s {\mathcal E} \: + \: n s \log(s) \right) 
\: = \: s \: \frac{d^2 {\mathcal E}}{ds^2} \: + \:
2 \: \frac{d{\mathcal E}}{ds} \: + \: \frac{n}{s}, 
\end{equation}
it suffices to show that
\begin{equation} \label{2.36}
\left( \frac{d{\mathcal E}}{ds} \right)^2 \: \le \: n \: 
\frac{d^2 {\mathcal E}}{ds^2}.
\end{equation}
Now 
\begin{align} \label{2.37}
\left( \frac{d{\mathcal E}}{ds} \right)^2 \: & = \:
\left( \int_M 
\nabla^2 \phi \: \rho \: \dvol_M \right)^2 \: \le \:
\int_M 
(\nabla^2 \phi)^2 \: \rho \: \dvol_M \\
& \: \le \: n \: 
\int_M 
|\Hess \phi|^2 \: \rho \: \dvol_M \: \le \: n \: 
\frac{d^2 {\mathcal E}}{ds^2}, \notag
\end{align}
which proves the proposition.
\end{proof}

\begin{remark} \label{rem1}
More generally, suppose that a background measure
$\nu \: = \: e^{- \Psi} \: \dvol_M \in P^\infty(M)$ is such that
$(M, \nu)$ has $\Ric_N \ge 0$ in the sense of 
\cite[Definition 0.10]{Lott-Villani}. Recall the class of
functions $DC_\infty$ in \cite[Equation (0.5)]{Lott-Villani}. 
Given $U \in DC_\infty$, define $U_\nu \: : \:
P^\infty(M) \rightarrow \R$ as in 
\cite[Equation (0.1)]{Lott-Villani}.
Then using the calculations of
\cite[Appendix D]{Lott-Villani}, one can show that
$s U_\nu \: + \: N s \log(s)$ is convex
along a Wasserstein geodesic in $P^\infty(M)$.
\end{remark}

Now define ${\mathcal E} \: : \: P(M) \rightarrow \R \cup \{ \infty \}$
by
\begin{equation} \label{2.38}
{\mathcal E}(\mu) \: = \: 
\begin{cases}
\int_M \rho \: \log(\rho) \: \dvol_M & \text{ if } \mu \: = \: \rho \: 
\dvol_M, \\
\infty & \text{ if } \mu \text{ is not absolutely continuous with respect to }
\dvol_M.
\end{cases}
\end{equation}

\begin{proposition} \label{prop6}
If $\Ric(M,g) \ge 0$ then
$s {\mathcal E} \: + \: n s \log(s)$ is convex
along a Wasserstein geodesic in $P(M)$.
\end{proposition}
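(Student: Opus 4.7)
The plan is to use the Lagrangian representation of the displacement interpolation rather than attempt a density argument from Proposition \ref{prop5}. The obstruction to a direct density argument is that, even when both endpoints lie in $P^\infty(M)$, a Wasserstein geodesic in $P(M)$ need not remain in $P^\infty(M)$ at interior times, so the Eulerian Otto calculus underlying Proposition \ref{prop5} is not available pointwise along the geodesic.

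First I would reduce to the case where $\mu_0 = \rho_0 \dvol_M$ and $\mu_1 = \rho_1 \dvol_M$ are both absolutely continuous; otherwise ${\mathcal E}$ equals $+\infty$ at an endpoint and the stated convexity is vacuous there. By McCann's theorem the Wasserstein geodesic is unique and has the form $\mu_s = (F_s)_* \mu_0$ with $F_s(x) = \exp_x(s \nabla \phi(x))$ for a semiconvex potential $\phi$. Letting $J_s(x)$ denote the Jacobian of $F_s$ at $x$, the push-forward identity $\rho_s(F_s(x)) \: J_s(x) = \rho_0(x)$ gives
\begin{equation*}
{\mathcal E}(\mu_s) \: = \: {\mathcal E}(\mu_0) \: - \: \int_M \log J_s(x) \: \mu_0(dx).
\end{equation*}
Since $s {\mathcal E}(\mu_0)$ is linear in $s$, it suffices to show that $s \mapsto - s f(s) + n s \log s$ is convex on $[0,1]$, where $f(s) := \int_M \log J_s(x) \: \mu_0(dx)$.

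The essential geometric input is the Bishop--G\"unther Jacobi-field estimate: under $\Ric(M,g) \ge 0$, the function $s \mapsto J_s(x)^{1/n}$ is concave for $\mu_0$-a.e.\ $x$. This translates to the pointwise bound $(\log J_s)_{ss} \: \le \: - \: ((\log J_s)_s)^2 / n$. Integrating against the probability measure $\mu_0$ and using Cauchy--Schwarz (i.e.\ Jensen for the square) yields $f''(s) \: \le \: - \: f'(s)^2 / n$. Combined with the elementary identity
\begin{equation*}
\frac{s}{n} \: f'(s)^2 \: - \: 2 f'(s) \: + \: \frac{n}{s} \: = \: \left( f'(s) \: \sqrt{\frac{s}{n}} \: - \: \sqrt{\frac{n}{s}} \: \right)^2 \: \ge \: 0,
\end{equation*}
this gives $\frac{d^2}{ds^2} \bigl[ - s f(s) + n s \log s \bigr] \: = \: - 2 f'(s) \: - \: s f''(s) \: + \: n/s \: \ge \: 0$, which is the required convexity.

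The hard part will be justifying the change-of-variables formula and the pointwise concavity of $J_s^{1/n}$ at interior times $s \in (0,1)$ when $\phi$ is only semiconvex and $F_s$ is only Lipschitz. I would handle this as in the proof of displacement convexity by Cordero-Erausquin, McCann, and Schmuckenschl\"ager, combining Alexandrov's theorem on the a.e.\ twice-differentiability of semiconvex functions with the second-order Jacobi equation along the geodesics $s \mapsto \exp_x(s \nabla \phi(x))$. This machinery defines $J_s(x)$ and establishes the Bishop--G\"unther concavity of $J_s^{1/n}$ for $\mu_0$-a.e.\ $x$, despite the limited smoothness of $\phi$.
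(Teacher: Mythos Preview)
Your proposal is correct and is precisely the approach the paper has in mind: the paper's own proof is just the one-line remark that one uses the Lagrangian formulation as in \cite[Pf.\ of Theorem 7.3]{Lott-Villani}, with details omitted. Your outline supplies those details, and the inequality $f'' \le -\,(f')^2/n$ obtained via the Bishop--Jacobi concavity of $J_s^{1/n}$ together with Jensen is exactly the Lagrangian translation of the Eulerian estimate (\ref{2.37}) used in Proposition~\ref{prop5}.

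Two small comments. First, a slightly cleaner variant avoids the Jensen step altogether: from $(\log J_s)_{ss} \le -\tfrac{1}{n}((\log J_s)_s)^2$ one gets directly, for $\mu_0$-a.e.\ $x$, that $s \mapsto -s\log J_s(x) + n s \log s$ is convex (by the same perfect-square identity you wrote down, applied pointwise), and then integrates against $\mu_0$; this sidesteps differentiating $f$ under the integral sign and is closer to how \cite{Cordero-Erausquin-McCann-Schmuckenschlaeger (2001)} and \cite{Lott-Villani} organize the argument. Second, your reduction ``otherwise the convexity is vacuous'' is slightly imprecise: if only one endpoint fails to be absolutely continuous, convexity inequalities involving that endpoint are indeed trivial, but one still has to verify convexity on the remaining subinterval, which follows from your argument applied on $[0,1-\epsilon]$ (or $[\epsilon,1]$) after possibly reversing the roles of $\mu_0$ and $\mu_1$.
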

\begin{proof}
The proof uses the Lagrangian formulation of optimal transport;
see, for example, 
\cite[Pf. of Theorem 7.3]{Lott-Villani}.
We omit the details.
\end{proof}

\begin{remark} \label{rem2}
Similarly, in the setup of Remark \ref{rem1}, one has
that $s U_\nu \: + \: N s \log(s)$ is convex
along a Wasserstein geodesic in $P(M)$. It appears that most of
the results of \cite{Lott-Villani} could be derived using
the class of functions $DC_\infty$ and the functional
$s U_\nu \: + \: N s \log(s)$. The paper \cite{Lott-Villani}
used instead the class of functions $DC_N$ and the function
$U_\nu$.
\end{remark}

\section{Wasserstein distance and Ricci flow} \label{section3}

In this section we discuss a first monotonicity relation between
Ricci flow and optimal transport.  Namely, suppose that
the Ricci flow equation is satisfied and we have two solutions
$c_0(t), c_1(t)$ of the backward heat flow, acting on 
probability measures on $M$.
Then the Wasserstein distance $W_2(c_0(t), c_1(t))$ is nondecreasing
in $t$. We first give a quick formal proof. We then write out
a rigorous proof using the Otto calculus.
A proof using the Lagrangian approach appears in 
\cite{McCann-Topping}.

Let $(M, g(\cdot))$ be a solution to the Ricci flow equation
\begin{equation} \label{3.1}
\frac{dg}{dt} \: = \: - \: 2 \Ric.
\end{equation}
Then
\begin{equation} \label{3.2}
\frac{d(\dvol_M)}{dt} \: = \: - \: R \: \dvol_M.
\end{equation}

The metric $G$ on $P^\infty(M)$, from (\ref{2.3}), is also $t$-dependent.
Fix $\mu \in P^\infty(M)$ and $\delta \mu \in T_{\mu} P^\infty(M)$. At time
$t$, we can write $\mu \: = \: \rho \: \dvol_M$ and
$\delta \mu \: = \: V_{\phi}$ where $\rho$ and $\phi$ are $t$-dependent.

We now compute the first derivative of $G$ with respect to $t$.
\begin{proposition} \label{prop7}
\begin{equation} \label{3.3}
\frac{dG}{dt}( \delta \mu, \delta \mu) \: = \:
- \: 2 \int_M \Ric(\nabla \phi, \nabla \phi) \: d\mu.
\end{equation}
\end{proposition}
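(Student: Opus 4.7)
The plan is to differentiate $G(\delta\mu,\delta\mu) = \int_M |\nabla\phi|^2\,\rho\,\dvol_M$ in $t$, keeping in mind that $\mu = \rho\,\dvol_M$ and the signed measure $\delta\mu = -\nabla^i(\rho\nabla_i\phi)\,\dvol_M$ are held fixed while $g$, and hence $\rho$ and $\phi$, vary with $t$. Since $\mu$ is $t$-independent, the product-rule contributions coming from $\partial_t\rho$ and $\partial_t(\dvol_M)$ cancel, yielding
\begin{equation*}
\frac{dG}{dt}(\delta\mu,\delta\mu) \: = \: \int_M \partial_t(|\nabla\phi|^2)\,d\mu.
\end{equation*}
The Ricci flow equation (\ref{3.1}) gives $\partial_t g^{ij} = 2R^{ij}$, so
\begin{equation*}
\partial_t(|\nabla\phi|^2) \: = \: 2\Ric(\nabla\phi,\nabla\phi) \: + \: 2\langle\nabla\phi,\nabla(\partial_t\phi)\rangle,
\end{equation*}
which already produces the desired $\Ric$-term together with an apparently unwanted contribution involving $\partial_t\phi$.

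The key step is to dispose of the $\partial_t\phi$ term without solving for $\partial_t\phi$ explicitly, by exploiting the alternate expression from (\ref{2.4}): $G(V_\phi,V_\phi) = \int_M \phi\cdot\delta_{V_\phi}\rho\,\dvol_M = \int_M \phi\,d(\delta\mu)$. Since $\delta\mu$ is $t$-independent, direct differentiation gives
\begin{equation*}
\frac{dG}{dt}(\delta\mu,\delta\mu) \: = \: \int_M \partial_t\phi\,d(\delta\mu).
\end{equation*}
On the other hand, integration by parts on the unwanted term gives $2\int_M\langle\nabla\phi,\nabla(\partial_t\phi)\rangle\rho\,\dvol_M = -2\int_M \partial_t\phi\cdot\nabla^i(\rho\nabla_i\phi)\,\dvol_M = 2\int_M\partial_t\phi\,d(\delta\mu)$, i.e.\ twice the previous display. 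Substituting back produces the self-referential identity
\begin{equation*}
\frac{dG}{dt}(\delta\mu,\delta\mu) \: = \: 2\int_M \Ric(\nabla\phi,\nabla\phi)\,d\mu \: + \: 2\,\frac{dG}{dt}(\delta\mu,\delta\mu),
\end{equation*}
which rearranges to the claimed formula.

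I do not expect a serious obstacle: on a closed manifold all integrations by parts are boundary-free, and the only subtle point is that $\phi$ (and hence $\partial_t\phi$) is defined only modulo additive functions of $t$ alone, but this ambiguity is harmless because $\delta\mu$ has total mass zero. Conceptually, the main move is the cancellation trick using the two equivalent formulas (\ref{2.3}) and (\ref{2.4}) for $G$, which bypasses any need to compute $\partial_t\phi$ from the constraint equation $-\nabla^i(\rho\nabla_i\phi)\,\dvol_M = \delta\mu$; I expect this same trick to recur in the later Eulerian computations under a Ricci flow background.
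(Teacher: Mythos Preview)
Your proof is correct and follows essentially the same route as the paper's. Both arguments differentiate $G=\int_M |\nabla\phi|^2\,d\mu$ to obtain the $\Ric$-term plus the unwanted $\int_M\langle\nabla\phi,\nabla\partial_t\phi\rangle\,d\mu$, and both eliminate the latter without solving for $\partial_t\phi$: the paper differentiates the constraint $\int_M f\,d(\delta\mu)=\int_M\langle\nabla f,\nabla\phi\rangle\,d\mu$ at fixed $f$ and then sets $f=\phi$, while you differentiate the dual pairing $G=\int_M\phi\,d(\delta\mu)$ directly and close up via the self-referential identity; these two manipulations are algebraically equivalent.
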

\begin{proof}
Letting $g^*$ denote the dual inner product on $T^* M$, we can write
\begin{equation} \label{3.4}
G( \delta \mu, \delta \mu) \: = \:
\int_M g^* (d \phi, d \phi) \: d\mu.
\end{equation}
Since the differential $d$ is invariantly defined, 
we have $\frac{d}{dt} d\phi = d \frac{d\phi}{dt}$.
Then
\begin{equation} \label{3.5}
\frac{dG}{dt}( \delta \mu, \delta \mu) \: = \:
2 \int_M \Ric(\nabla \phi, \nabla \phi) \: d\mu \: + \: 
2 \int_M g^* \left( d \phi, d \frac{d\phi}{dt} \right) \: d\mu.
\end{equation}
For any fixed $f \in C^\infty(M)$, we have
\begin{equation} \label{3.6}
\int_M f \: d(\delta \mu) \: = \: \int_M g^*(d f, d \phi) \: d\mu.
\end{equation}
Differentiating with respect to $t$ gives
\begin{equation} \label{3.7}
0 \: = \: 2 \int_M \Ric(\nabla f, \nabla \phi) \: d\mu \: + \:
\int_M g^* \left( d f, d \frac{d\phi}{dt} \right) \: d\mu.
\end{equation}
Putting $f = \phi$ gives
\begin{equation} \label{3.8}
0 \: = \: 2 \int_M \Ric(\nabla \phi, \nabla \phi) \: d\mu \: + \:
\int_M g^* \left( d \phi, d \frac{d\phi}{dt} \right) \: d\mu.
\end{equation}
Equation (\ref{3.3}) follows from combining (\ref{3.5}) and (\ref{3.8}).
\end{proof}

Let $\grad {\mathcal E}$ denote the formal gradient
of ${\mathcal E}$ on $P^\infty(M)$ and let $\Hess {\mathcal E}$
denote its Hessian.  Now the Lie derivative of the metric $G$ with respect to
the vector field $\grad {\mathcal E}$ is
${\mathcal L}_{\grad {\mathcal E}} G
\: = \: 2 \Hess {\mathcal E}$.
From Proposition \ref{prop3}, 
\begin{equation} \label{3.9}
(\Hess {\mathcal E})(V_\phi, V_\phi)  \: = \: \int_M \left(
|\Hess \phi|^2 \: + \: \Ric(\nabla \phi,\nabla \phi) \right)
\: \rho \: \dvol_M.
\end{equation}                 
Then from (\ref{3.3}) and (\ref{3.9}),
\begin{equation} \label{3.10}
\frac{dG}{dt} \: + \: {\mathcal L}_{\grad {\mathcal E}} G \: \ge \: 0.
\end{equation}

Let $\{ \phi_t \}$ be the $1$-parameter group generated by 
$\grad {\mathcal E}$. Equation (\ref{3.10}) implies that
$\phi_t^* G(t)$ is nondecreasing in $t$. In particular,
for any $\mu_0, \mu_1 \in P^\infty(M)$ the Wasserstein distance
$d_W(\phi_t(\mu_0), \phi_t(\mu_1))$ is nondecreasing in $t$.

It remains to compute the flow $\{ \phi_t \}$. This is a
well-known calculation.

\begin{lemma} \label{lem1}
In $T_{\rho \: \dvol_M} P^\infty(M)$,
\begin{equation} \label{3.11}
\grad {\mathcal E} \: = \: V_{\log \rho}.
\end{equation}
\end{lemma}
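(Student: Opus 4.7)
The plan is to identify $\grad \mathcal{E}$ by its defining property with respect to the Otto metric $G$: it is the unique tangent vector such that $G(\grad \mathcal{E}, V_\phi) = V_\phi \mathcal{E}$ for every smooth $\phi$. So I would show directly that $G(V_{\log \rho}, V_\phi) = V_\phi \mathcal{E}$.

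First I would compute the directional derivative $V_\phi \mathcal{E}$. This is exactly what Proposition \ref{prop3} does: along a curve in $P^\infty(M)$ satisfying $\partial_s \rho = - \nabla^i(\rho \nabla_i \phi)$, equation (\ref{2.28}) gives
\begin{equation*}
V_\phi \mathcal{E} \: = \: \frac{d\mathcal{E}}{ds} \: = \: \int_M \langle \nabla \phi, \nabla \rho \rangle \: \dvol_M.
\end{equation*}
Since $\rho > 0$ on the compact manifold $M$, the function $\log \rho$ is smooth, so $V_{\log \rho}$ is a well-defined tangent vector and $\nabla \rho = \rho \: \nabla \log \rho$ makes sense pointwise. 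Substituting yields
\begin{equation*}
V_\phi \mathcal{E} \: = \: \int_M \langle \nabla \phi, \nabla \log \rho \rangle \: \rho \: \dvol_M.
\end{equation*}

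Next I would recognize the right-hand side as the Otto inner product: by the defining formula (\ref{2.3}),
\begin{equation*}
\int_M \langle \nabla \phi, \nabla \log \rho \rangle \: \rho \: \dvol_M \: = \: G(V_\phi, V_{\log \rho})(\rho \: \dvol_M).
\end{equation*}
Combining the two displays shows $G(V_{\log \rho}, V_\phi) = V_\phi \mathcal{E}$ for every $\phi \in C^\infty(M)$, so by nondegeneracy of $G$ on $T_{\rho \dvol_M} P^\infty(M) \cong C^\infty(M)/\mathbb{R}$ we conclude $\grad \mathcal{E} = V_{\log \rho}$.

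There is no real obstacle; the computation is essentially a rewriting of Proposition \ref{prop3}. The only point that deserves a quick comment is that $V_{\log \rho}$ lies in $T_{\rho \dvol_M} P^\infty(M)$, which follows from the strict positivity and smoothness of $\rho$ on the closed manifold $M$, ensuring $\log \rho \in C^\infty(M)$.
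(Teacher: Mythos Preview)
Your proof is correct and is essentially identical to the paper's own argument: both compute $V_\phi \mathcal{E}$ via (\ref{2.28}), rewrite $\nabla\rho = \rho\,\nabla\log\rho$, and identify the result as $G(V_\phi, V_{\log\rho})$. Your added remark that $\log\rho$ is smooth because $\rho>0$ on the closed manifold is a minor clarification the paper leaves implicit.
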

\begin{proof}
From (\ref{2.28}), for all $V_\phi \in T_{\rho \: \dvol_M} P^\infty(M)$, we have
\begin{align} \label{3.12}
G(V_\phi, \grad {\mathcal E})(\rho \: \dvol_M) \: & = \:
(V_\phi {\mathcal E})(\rho \: \dvol_M) \: = \:
\int_M \langle \nabla \phi, \nabla \rho \rangle \: \dvol_M \\
& = \:  
\int_M \langle \nabla \phi, \nabla \log \rho \rangle \: \rho \: \dvol_M
\: = \:  
G(V_\phi, V_{\log \rho})(\rho \: \dvol_M), \notag
\end{align}
from which the lemma follows.
\end{proof}

\begin{lemma} \label{lem2}
For $\mu \in P^\infty(M)$, if $\mu_t \: = \: \phi_t(\mu)$ then
\begin{equation} \label{3.13}
\frac{d \mu_t}{dt} \: = \: - \: \nabla^2 \mu_t.
\end{equation}
Equivalently, writing $\mu_t \: = \: \rho_t \: \dvol_M$, we have
\begin{equation} \label{3.14}
\frac{d \rho_t}{dt} \: = \: - \: \nabla^2 \rho_t \: + \: R \rho_t.
\end{equation}
\end{lemma}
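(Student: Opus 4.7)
The plan is to combine Lemma \ref{lem1} with the definition (2.2) of the Otto calculus tangent vectors and the Ricci-flow evolution (3.2) of the volume form; this is essentially a direct computation rather than a delicate argument.

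First, I would use the defining property of the flow $\{\phi_t\}$ to write $\frac{d\mu_t}{dt} \: = \: (\grad {\mathcal E})|_{\mu_t}$ as a tangent vector at $\mu_t \: = \: \rho_t \: \dvol_M(t)$. By Lemma \ref{lem1} this equals $V_{\log \rho_t}|_{\mu_t}$. Invoking the parametrization (2.2), the tangent vector $V_\phi$ at $\rho \: \dvol_M$ represents the infinitesimal variation $- \: \nabla^i (\rho \nabla_i \phi) \: \dvol_M$; taking $\phi = \log \rho_t$ collapses $\rho_t \nabla_i \log \rho_t$ to $\nabla_i \rho_t$, so the variation is $- \: \nabla^2 \rho_t \cdot \dvol_M(t)$. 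Under the natural convention that $\nabla^2 (\rho \: \dvol_M)$ means $(\nabla^2 \rho) \: \dvol_M$, this is precisely $- \: \nabla^2 \mu_t$, which is (3.13).

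To deduce the equivalent form (3.14), I would expand $\mu_t = \rho_t \: \dvol_M(t)$ and differentiate explicitly, using (3.2) which under the Ricci-flow equation (3.1) asserts $\frac{d(\dvol_M)}{dt} \: = \: - \: R \: \dvol_M$. Thus
\begin{equation*}
\frac{d\mu_t}{dt} \: = \: \frac{d\rho_t}{dt} \: \dvol_M(t) \: + \: \rho_t \: \frac{d(\dvol_M(t))}{dt} \: = \: \left( \frac{d\rho_t}{dt} \: - \: R \rho_t \right) \: \dvol_M(t).
\end{equation*}
Comparing this identity with (3.13), rewritten as $\frac{d\mu_t}{dt} \: = \: - \: (\nabla^2 \rho_t) \: \dvol_M(t)$, yields (3.14).

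The one conceptual point that must be kept straight — and not really an obstacle — is that the Otto tangent vector $V_{\log \rho_t}$ at the instant $t$ is computed relative to the time-$t$ Riemannian structure, so the variation it encodes is expressed with $\dvol_M(t)$ as the reference volume. The $R \rho_t$ term in (3.14) therefore arises entirely from differentiating the background volume form, and not from the Otto calculus itself; this is exactly why the measure-level equation (3.13) is the honest backward heat equation while the density-level equation (3.14) picks up the extra scalar-curvature correction.
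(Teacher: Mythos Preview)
Your proof is correct and follows essentially the same route as the paper: identify $\frac{d\mu_t}{dt}$ with $V_{\log\rho_t}$ via Lemma~\ref{lem1}, collapse $-\nabla^i(\rho_t\nabla_i\log\rho_t)\,\dvol_M$ to $-(\nabla^2\rho_t)\,\dvol_M=-\nabla^2\mu_t$, and then obtain (\ref{3.14}) from (\ref{3.2}). The paper's proof is just a terser version of what you wrote, without the explanatory remarks on the time-dependent volume form.
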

\begin{proof}
Given $\mu \: = \: \rho \: \dvol_M$, we can write
\begin{equation} \label{3.15}
- \: \nabla^i (\rho \nabla_i \log \rho) \: \dvol_M \: = \:
- \: (\nabla^2 \rho) \: \dvol_M \: = \: - \: \nabla^2 \mu.
\end{equation}
Then (\ref{3.13}) follows from (\ref{3.11}) and (\ref{3.15}).
Equation (\ref{3.14}) follows from (\ref{3.2}).
\end{proof}

Thus we have formally shown that if $g(t)$ satisfies the
Ricci flow equation (\ref{3.1}) and
$\rho_{i,t}$ satisfies the backward heat equation
\begin{equation} \label{3.16}
\frac{d \rho_{i,t}}{dt} \: = \: - \: \nabla^2 
\rho_{i,t} \: + \: R \rho_{i,t}
\end{equation}
for $i \in \{0,1\}$ then the time-dependent Wasserstein
distance
$d_W(\rho_{0,t} \: \dvol_M, \rho_{1,t} \: \dvol_M)$ is 
nondecreasing in $t$. 

We now translate this into a rigorous proof using the Otto calculus.
We first derive a general formula for the derivative of
the energy functional $E$ along a $1$-parameter family of
smooth curves in $P^\infty(M)$.

\begin{proposition} \label{prop8}
Let $g(\cdot)$ solve the Ricci flow equation (\ref{3.1}) for
$t \in [t_0 - \epsilon, t_0 + \epsilon]$.
Let 
\begin{equation} \label{3.17}
\rho \: \dvol_M \: : \: [0,1] \times [t_0 - \epsilon, t_0 + \epsilon]
\rightarrow P^\infty(M)
\end{equation}
be a smooth map, with $\rho \equiv \rho(s,t)$.
Let 
\begin{equation} \label{3.18}
\phi \: : \: [0,1] \times [t_0 - \epsilon, t_0 + \epsilon]
\rightarrow C^\infty(M)
\end{equation}
be a smooth map that satisfies (\ref{2.6}), with
$\phi \equiv \phi(s,t)$.
Put 
\begin{equation} \label{3.19}
E(t) \: = \: \frac12 \int_0^1 \int_M |\nabla \phi|^2 \: \rho \: \dvol_M \: ds.
\end{equation}
Then 
\begin{align} \label{3.20}
\frac{dE}{dt} \Bigg|_{t = t_0} \: = \:
& \int_M \phi \left( \frac{\partial \rho}{\partial t} \: + 
 \: \nabla^2 \rho
\: - \: R \rho \right) \dvol_M \Bigg|_{s=0}^1 \: - \\
& \int_0^1 \int_M \left( \frac{\partial \phi}{\partial s} \: + \:
\frac12 \: |\nabla \phi|^2 \right) \: \left( \frac{\partial \rho}{\partial t}
\:  + 
 \: \nabla^2 \rho \: - \: R \rho \right)\: \dvol_M \: + \notag \\
& \int_0^1
\int_M |\Hess \phi|^2 \: \rho \: \dvol_M \: ds, \notag
\end{align}
where the right-hand side is evaluated at time $t = t_0$.
\end{proposition}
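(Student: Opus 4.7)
The plan is to combine a direct calculation of $\frac{dE}{dt}$ with the integrated second-variation-of-entropy identity coming from Proposition \ref{prop3}, which still holds pointwise in $t$ since it is a purely spatial/parametric statement. Compared to Proposition \ref{prop4} (the static case), the only new ingredients are the Ricci flow evolutions $\partial_t g^{ij} = 2R^{ij}$ and $\partial_t \dvol_M = -R\dvol_M$ (the latter being (\ref{3.2})). The whole point of the computation is that these two corrections conspire to eliminate the $\Ric(\nabla\phi,\nabla\phi)$ term that appears in Proposition \ref{prop4}, leaving only $|\Hess\phi|^2$.

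First I would differentiate $E(t) = \frac12 \int_0^1 \int_M g^{ij} \partial_i\phi \: \partial_j\phi \: \rho \: \dvol_M \: ds$ directly at $t = t_0$. The $\partial_t g^{ij}$ contribution gives $+\int_0^1 \int_M \Ric(\nabla\phi,\nabla\phi) \rho \: \dvol_M \: ds$ (essentially Proposition \ref{prop7}), while $\partial_t \dvol_M$ contributes $-\frac12 \int_0^1 \int_M |\nabla\phi|^2 R \rho \: \dvol_M \: ds$. To dispose of the remaining $\partial_t \phi$ contribution I would differentiate the weak form (\ref{2.14}) of the constraint (\ref{2.6}), namely $\int_M f \: \partial_s \rho \: \dvol_M = \int_M \langle\nabla f, \nabla\phi\rangle \rho \: \dvol_M$, with respect to $t$ at fixed $s$, take $f = \phi$, and integrate by parts in $s$ over $[0,1]$ --- just as in (\ref{2.14})--(\ref{2.16}) but now $\partial_t g^{ij}$ and $\partial_t \dvol_M$ each produce new Ricci-flow contributions. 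Collecting everything and simplifying yields the intermediate identity
\begin{align*}
\frac{dE}{dt}\bigg|_{t=t_0} = {}& \int_M \phi(\partial_t\rho - R\rho) \: \dvol_M \bigg|_{s=0}^1 \\
& - \int_0^1\int_M \left(\partial_s\phi + \tfrac12 |\nabla\phi|^2\right)(\partial_t\rho - R\rho) \: \dvol_M \: ds \\
& - \int_0^1\int_M \Ric(\nabla\phi,\nabla\phi) \rho \: \dvol_M \: ds,
\end{align*}
which is the Ricci-flow analog of Proposition \ref{prop1} augmented by a leftover $\Ric$ term.

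Finally I would add to this the identity obtained by integrating Proposition \ref{prop3} in $s$ at fixed $t = t_0$, namely
\begin{align*}
0 = {}& \int_M \phi \: \nabla^2 \rho \: \dvol_M \bigg|_{s=0}^1 - \int_0^1\int_M \left(\partial_s\phi + \tfrac12 |\nabla\phi|^2\right)\nabla^2\rho \: \dvol_M \: ds \\
& + \int_0^1\int_M \left(|\Hess\phi|^2 + \Ric(\nabla\phi,\nabla\phi)\right)\rho \: \dvol_M \: ds.
\end{align*}
The two $\Ric$ contributions cancel, the $\nabla^2\rho$ pieces combine with the $\partial_t\rho - R\rho$ factors to yield $\partial_t\rho + \nabla^2\rho - R\rho$, and the $|\Hess\phi|^2$ term survives, producing exactly (\ref{3.20}). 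The main obstacle is the bookkeeping in the $\partial_t\phi$ step: the defining equation (\ref{2.6}) must be differentiated in $t$ under the evolving metric, producing several extra terms (from $\partial_t g^{ij}$, from $\partial_t\dvol_M$, and from $\partial_t\rho$), and one must verify that the residual $\Ric$ term has precisely the right sign and coefficient for the Proposition \ref{prop3} identity to convert it cleanly into $+\int|\Hess\phi|^2$ with no leftover curvature.
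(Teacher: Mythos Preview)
Your proposal is correct and follows essentially the same route as the paper: direct differentiation of $E$ under the evolving metric (the paper's (3.21)), differentiation of the constraint (\ref{2.6}) in $t$ followed by setting $f=\phi$ and integrating by parts in $s$ ((3.22)--(3.25)), and then addition of the integrated entropy-Hessian identity from Proposition \ref{prop3} ((3.26)). The only cosmetic difference is that you package the $R\rho$ contributions into the factor $\partial_t\rho - R\rho$ at the intermediate stage, whereas the paper keeps them separate until a final integration-by-parts-in-$s$ identity (its (3.28)) and then adds (3.25), (3.26), (3.28) together; the content is identical.
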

\begin{proof}
We have
\begin{equation} \label{3.21}
\frac{dE}{dt} \: = \: \int_0^1 \int_M \left( 
\Ric(\nabla \phi, \nabla \phi) \: \rho \: + \: 
\langle \nabla \phi,
\nabla \frac{\partial \phi}{\partial t} \rangle \: \rho \: + \: 
\frac12 \: |\nabla \phi|^2 \: \frac{\partial \rho}{\partial t} 
\: - \: \frac12 \: R |\nabla \phi|^2 \rho \right)
\: \dvol_M \: ds. 
\end{equation}
For a fixed $f \in C^\infty(M)$,
\begin{equation} \label{3.22}
\int_M f \: \frac{\partial \rho}{\partial s} \: \dvol_M \: = \:
\int_M \langle \nabla f, \nabla \phi \rangle \: \rho \: \dvol_M.
\end{equation}
Hence
\begin{align} \label{3.23}
& \int_M f \: \left(
\frac{\partial^2 \rho}{\partial s \partial t} \: - \:
R \frac{\partial \rho}{\partial s} \right) \: \dvol_M \: = \\
& \int_M \left( 
2 \: \Ric(\nabla f, \nabla \phi) \: \rho \: + \: 
\langle \nabla f, \nabla \frac{\partial \phi}{\partial t} 
\rangle \: \rho \: + \:
\langle \nabla f, \nabla \phi \rangle \: \frac{\partial \rho}{\partial t}
\: - \: R \: \langle \nabla f, \nabla \phi \rangle \: \rho
\right)
\: \dvol_M. \notag
\end{align}
Taking $f = \phi$ gives
\begin{align} \label{3.24}
& \int_M \phi \: \left(
\frac{\partial^2 \rho}{\partial s \partial t} \: - \:
R \frac{\partial \rho}{\partial s} \right) \: \dvol_M \: = \\
& \int_M \left(
2 \: \Ric(\nabla \phi, \nabla \phi) \: \rho \: + \: 
\langle \nabla \phi, \nabla \frac{\partial \phi}{\partial t} 
\rangle \: \rho \: + \:
|\nabla \phi|^2 \: \frac{\partial \rho}{\partial t}
\: - \: R \: |\nabla \phi|^2 \: \rho
\right)
\: \dvol_M. \notag
\end{align}
Equations (\ref{3.21}) and (\ref{3.24}) give
\begin{align} \label{3.25}
& \frac{dE}{dt} \: = \\
& \int_0^1 \int_M \left( 
\phi \: \frac{\partial^2 \rho}{\partial s \partial t}
\: - \: R  \phi \frac{\partial \rho}{\partial s}
 \: - \: \frac12 \:
| \nabla \phi|^2 \: \frac{\partial \rho}{\partial t}
\: - \: \Ric(\nabla \phi, \nabla \phi) \: \rho \: 
+ \: \frac12 \: R |\nabla \phi|^2
\rho
\right)
\: \dvol_M \: ds \: = \notag \\
& \int_0^1 \int_M \left( \frac{\partial}{\partial s} \left(
\phi \: \frac{\partial \rho}{\partial t} \right) 
\: - \: R  \phi \frac{\partial \rho}{\partial s}
\: - \:
\left(\frac{\partial \phi}{\partial s}  \: + \: \frac12 \: %
| \nabla \phi|^2 \right) \: \frac{\partial \rho}{\partial t}
\: - \: \Ric(\nabla \phi, \nabla \phi)  \: \rho
\: + \: \frac12 \: R |\nabla \phi|^2
\rho
\right) \notag \\
& \dvol_M \: ds = \notag \\
& \int_M 
\phi \: \frac{\partial \rho}{\partial t} \: \dvol_M \Bigg|_{s=0}^1
\: + \notag \\
& \int_0^1 \int_M \left( 
- \: R  \phi \frac{\partial \rho}{\partial s} \:
- \:
\left(\frac{\partial \phi}{\partial s}  \: + \: \frac12 \: %
| \nabla \phi|^2 \right) \: \frac{\partial \rho}{\partial t}
\: - \: \Ric(\nabla \phi, \nabla \phi) \: \rho 
\: + \: \frac12 \: R |\nabla \phi|^2
\rho
\right)
\: \dvol_M \: ds \notag
\end{align}
From (\ref{2.33}),
\begin{align} \label{3.26}
0 \: = \: \int_M 
\phi \: \nabla^2 \rho \: \dvol_M \Bigg|_{s=0}^1 \: + \:
& \int_0^1 \int_M \left[
|\Hess \phi|^2 \: + \: \Ric(\nabla \phi,\nabla \phi) \right]
\: \rho \: \dvol_M \: ds \: - \\
&
\int_0^1 \int_M \nabla^2 \rho \: \left( \frac{\partial \phi}{\partial s}
\: + \: \frac12 \: |\nabla \phi|^2 \right) \: \dvol_M \: ds. \notag
\end{align}
Finally, 
\begin{equation} \label{3.27}
\frac{\partial}{\partial s} \int_M R \phi \rho \: \dvol_M \: = \:
\int_M \left( R \frac{\partial \phi}{\partial s} \rho \: + \:
R \phi \frac{\partial \rho}{\partial s} \right)
 \: \dvol_M,    
\end{equation}
so
\begin{equation} \label{3.28}
0 \: = \: - \: \int_M R \phi \rho \: \dvol_M \Bigg|_{s=0}^1 \: + \:
\int_0^1 \int_M \left( R \frac{\partial \phi}{\partial s} \rho \: + \:
R \phi \frac{\partial \rho}{\partial s} \right)
 \: \dvol_M ds.    
\end{equation}
Adding (\ref{3.25}), (\ref{3.26}) and (\ref{3.28}) gives the proposition.
\end{proof}

\begin{corollary} \label{cor3}
For $i \in \{0,1\}$,
let $c_i(t)$ be a solution of the backward heat equation
(\ref{3.13}) in $P^\infty(M)$.
Then
$W_2(c_0(t), c_1(t))$ is nondecreasing in $t$.
\end{corollary}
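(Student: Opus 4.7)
The plan is to adapt the proof of Corollary \ref{cor2}, replacing Proposition \ref{prop4} with Proposition \ref{prop8}. Fix $t_0 < t_1$ in the domain of the Ricci flow, and let $\epsilon > 0$. Applying Theorem \ref{OW} at the metric $g(t_1)$, I would choose a smooth curve $s \mapsto c(s)$ in $P^\infty(M)$ from $c_0(t_1)$ to $c_1(t_1)$ with $E_{t_1}(c) \leq \tfrac{1}{2} W_2(c_0(t_1), c_1(t_1))^2 + \epsilon$.

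Next, for each $s \in [0,1]$, extend $c(s)$ to a family $\{c(s,t)\}_{t \in [t_0,t_1]}$ by solving the backward heat equation (\ref{3.13}) with terminal condition $c(s, t_1) = c(s)$. Running (\ref{3.13}) in the direction of decreasing $t$ is equivalent to running a standard forward heat equation with smooth time-dependent Laplacian, so strict positivity, smoothness, and total mass one are preserved, and $c(s,t) \in P^\infty(M)$ throughout. Because $c_0$ and $c_1$ themselves solve (\ref{3.13}), uniqueness forces $c(0,t) = c_0(t)$ and $c(1,t) = c_1(t)$. Let $\phi(s,t)$ be the potential determined by (\ref{2.6}) and let $E(t)$ denote the energy (\ref{3.19}) of the $s$-curve at time $t$, computed with respect to $g(t)$.

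The payoff comes from Proposition \ref{prop8}. The backward heat equation is precisely the relation $\frac{\partial \rho}{\partial t} + \nabla^2 \rho - R \rho = 0$, so in (\ref{3.20}) both the boundary term and the integral involving $\frac{\partial \phi}{\partial s} + \tfrac{1}{2} |\nabla \phi|^2$ vanish identically, leaving
\[
\frac{dE}{dt} \;=\; \int_0^1 \int_M |\Hess \phi|^2 \, \rho \, \dvol_M \, ds \;\geq\; 0.
\]
Hence $E$ is nondecreasing on $[t_0, t_1]$, and combining this with Theorem \ref{OW} applied at $t_0$ yields
\[
\tfrac{1}{2} W_2(c_0(t_0), c_1(t_0))^2 \;\leq\; E(t_0) \;\leq\; E(t_1) \;\leq\; \tfrac{1}{2} W_2(c_0(t_1), c_1(t_1))^2 + \epsilon.
\]
Letting $\epsilon \to 0$ gives the monotonicity.

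The main step requiring care is ensuring that the two-parameter family $c(s,t)$ actually remains in $P^\infty(M)$ and is smooth in $(s,t)$, so that Proposition \ref{prop8} applies verbatim and the potential $\phi(s,t)$ from (\ref{2.6}) exists. This is the analog of the density issue that forces Corollary \ref{cor2} to invoke the Otto-Westdickenberg result, but here it is more elementary: the backward heat equation run from $t_1$ down to $t_0$ with smooth time-varying geometry is parabolic of standard type, so classical parabolic regularity delivers smoothness and strict positivity, as well as smooth dependence on the parameter $s$.
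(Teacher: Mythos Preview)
Your argument is correct and follows the paper's proof essentially verbatim: approximate the Wasserstein geodesic at the later time via Theorem~\ref{OW}, propagate the whole $s$-family backward by (\ref{3.13}) (which is well-posed as a forward heat flow in the reversed time variable), and invoke Proposition~\ref{prop8} to see that the parenthesized factor $\frac{\partial\rho}{\partial t}+\nabla^2\rho-R\rho$ vanishes identically, leaving $\frac{dE}{dt}=\int_0^1\int_M|\Hess\phi|^2\rho\,\dvol_M\,ds\ge 0$. The only difference is notational (your $t_1$ plays the role of the paper's $t_0$), and you spell out the parabolic regularity justification a bit more explicitly than the paper does.
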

\begin{proof}
Fix $t_0$. Using Theorem \ref{OW}, given $\epsilon > 0$, choose a smooth curve
$c \: : \: [0,1] \rightarrow P^\infty(M)$ so that
$c(0) \: = \: c_0(t_0)$, $c(1) \: = \: c_1(t_0)$ and
$E(c) \: \le \: \frac12 \: W_2(c_0(t_0), c_1(t_0))^2 \: + \: \epsilon$.
For $t \le t_0$, define $c_t \: : \: [0,1] \rightarrow P^\infty(M)$ by
saying that $c_{t_0}(s) = c(s)$ and $c_t(s)$ satisfies
equation (\ref{3.13}) in $t$. By Proposition \ref{prop8},
$E(c_t)$ is nondecreasing in $t$. Hence
$\frac12 \: W_2(c_0(t), c_1(t))^2 \le E(c_t) \le E(c_0) \le 
\frac12 \: W_2(c_0(t_0), c_1(t_0))^2 \: + \: \epsilon$.
Since $\epsilon$ was arbitrary, the corollary follows.
\end{proof}

\begin{remark} \label{rem3}
To see the relation between Corollary \ref{cor2} and Corollary \ref{cor3},
suppose that $M$ is Ricci flat, in which case the Ricci flow on $M$ is
constant.  Put $\tau = t_0 - t$. Then the backward heat
equation (\ref{3.13}) in $t$ becomes a forward heat equation in
$\tau$. Corollary \ref{cor2} says that the Wasserstein distance
between the heat flows
is nonincreasing in $\tau$, i.e. nondecreasing in $t$.
\end{remark}

Corollary \ref{cor3} was proven using Lagrangian methods in 
\cite{McCann-Topping}.

\section{Convexity of the ${\mathcal L}_0$-entropy}
\label{section4}

In this section we consider an analog ${\mathcal L}_0$ of
Perelman's ${\mathcal L}$-functional, which has the same
relationship to steady solitons as Perelman's ${\mathcal L}$-functional
has to shrinking solitons. Under a Ricci flow, we consider the
transport equation associated to the problem of minimizing the
${\mathcal L}_0$-cost.  We show the convexity of a modified
entropy functional.

Let $M$ be a connected closed manifold and let $g(\cdot)$ be a Ricci flow
solution on $M$. 

\begin{definition}
If $\gamma \: : \: [t^\prime, t^{\prime \prime}] \rightarrow M$ is a 
smooth curve then
its ${\mathcal L}_0$-length is
\begin{equation} \label{4.1}
{\mathcal L}_0(\gamma) \: = \: \frac12 \int_{t^\prime}^{t^{\prime \prime}}
\left( g \left( \frac{d\gamma}{dt}, \frac{d\gamma}{dt} \right) \: + \:
R(\gamma(t),t) \right) \: dt, 
\end{equation}
where the time-$t$ metric $g(t)$ is used to define the integrand.

Let $L_0^{t^\prime, t^{\prime \prime}}(m^\prime, m^{\prime \prime})$
be the infimum of ${\mathcal L}_0$ over curves $\gamma$ with
$\gamma(t^\prime) = m^\prime$ and
$\gamma(t^{\prime \prime}) = m^{\prime \prime}$.
\end{definition}

The Euler-Lagrange equation for the ${\mathcal L}_0$-functional
is easily derived to be
\begin{equation} \label{4.2}
\nabla_{\frac{d\gamma}{dt}} \left( \frac{d\gamma}{dt} \right) 
\: - \: \frac12 \: \nabla R
\: - \: 2 \: \Ric \left( \frac{d\gamma}{dt},\cdot \right) \: = \: 0.
\end{equation}
The ${\mathcal L}_0$-exponential map
${\mathcal L}_0\exp_{m^\prime}^{t^\prime, t^{\prime \prime}} \: : \:
T_{m^\prime} M \rightarrow M$
is defined by saying that
for $V \in T_{m^\prime}M$, one has
\begin{equation} \label{4.3}
{\mathcal L}_0\exp_{m^\prime}^{t^\prime, t^{\prime \prime}} 
\left(V \right) \: = \: \gamma(t^{\prime \prime})
\end{equation}
where $\gamma$ is the solution to (\ref{4.2}) with
$\gamma(t^\prime) = m^\prime$ and $\frac{d\gamma}{dt} \Big|_{t = t^\prime}
= V$.

\begin{definition}
Given $\mu^\prime, \mu^{\prime \prime} \in P(M)$, put
\begin{equation} \label{4.4}
C_0^{t^\prime, t^{\prime \prime}}
(\mu^\prime, \mu^{\prime \prime}) \: = \: \inf_\Pi
\int_{M \times M} 
L_0^{t^\prime, t^{\prime \prime}}(m^\prime, m^{\prime \prime}) \:
d\Pi(m^\prime, m^{\prime \prime}),
\end{equation}
where $\Pi$ ranges over the elements of $P(M \times M)$ whose pushforward
to $M$ under projection onto the first (resp. second) factor is
$\mu^\prime$ (resp. $\mu^{\prime \prime}$).
Given a continuous curve $c \: : \: [t^\prime, t^{\prime \prime}] \rightarrow
P(M)$, put
\begin{equation} \label{4.5}
{\mathcal A}_0(c) 
\: = \: \sup_{J \in \Z^+} \sup_{t^\prime = t_0 \le t_1 \le \ldots
\le t_J = t^{\prime \prime}} \sum_{j=1}^J C_0^{t_{j-1}, t_j}(c(t_{j-1}),
c(t_j)).
\end{equation}
\end{definition}

We can think of 
${\mathcal A}_0$ as a generalized energy functional associated
to the generalized metric $C_0$. 
By \cite[Theorem 7.21]{Villani2}, 
${\mathcal A}_0$ is a coercive action on $P(M)$
in the sense of
\cite[Definition 7.13]{Villani2}. In particular,
\begin{equation} \label{4.6}
C_0^{t^\prime, t^{\prime \prime}}
(\mu^\prime, \mu^{\prime \prime}) \: = \:
\inf_c {\mathcal A}_0(c),
\end{equation}
where $c$ ranges over continuous curves
$c \: : \: [t^\prime, t^{\prime \prime}] \rightarrow P(M)$
with $c(t^\prime) = \mu^\prime$ and
$c(t^{\prime \prime}) = \mu^{\prime \prime}$.

We now consider the equations that come from
minimizing the generalized energy functional ${\mathcal A}_0$,
when restricted to smooth curves in $P^\infty(M)$. 
If $c \: : [t_0, t_1] 
\rightarrow P^\infty(M)$ is a smooth curve in $P^\infty(M)$ then 
we write $c(t) \: = \: \rho(t) \: \dvol_M$ and let 
$\phi(t) \in C^\infty(M)$ satisfy
\begin{equation} \label{4.7}
\frac{\partial \rho}{dt} \: = \: - \: 
\nabla^i \left( \rho \nabla_i \phi \right) \: + \: R \rho.
\end{equation}
Note that $\phi(t)$ is uniquely defined up to an additive constant.
Using
(\ref{3.2}), the scalar curvature term in (\ref{4.7})  ensures that
\begin{equation} \label{4.8}
\frac{d}{dt} \int_M \rho \: \dvol_M \: = \: 0.
\end{equation}

Consider the Lagrangian 
\begin{equation} \label{4.9}
E_0(c) \: = \: \frac12 
\int_{t_0}^{t_1} \int_M \left( |\nabla \phi|^2
\: + \: R \right) \: \rho \: \dvol_M \: dt,
\end{equation}
where the integrand at time $t$ is computed using $g(t)$.

\begin{proposition} \label{prop9}
Let 
\begin{equation} \label{4.10}
\rho \: \dvol_M \: : \: [t_0, t_1] 
\times [- \epsilon, \epsilon]
\rightarrow P^\infty(M)
\end{equation}
be a smooth map, with
$\rho \equiv \rho(t,u)$.
Let 
\begin{equation} \label{4.11}
\phi \: : \: [t_0, t_1] \times [- \epsilon, \epsilon]
\rightarrow C^\infty(M)
\end{equation}
be a smooth map that satisfies (\ref{4.7}), with
$\phi \equiv \phi(t,u)$.
Then 
\begin{equation} \label{4.12}
\frac{dE_0}{du} \Bigg|_{u = 0} \: = \:
\int_M \phi \: \frac{\partial \rho}{\partial u}
\: \dvol_M \Bigg|_{t=t_0}^{t_1} \: - \: \
\int_{t_0}^{t_1}
\int_M \left( \frac{\partial \phi}{\partial t} \: + \: \frac12 \:
|\nabla \phi|^2  \: - \: \frac12 \: 
R \right) \: \frac{\partial \rho}{\partial u}
\: \dvol_M \: dt,
\end{equation}
where the right-hand side is evaluated at $u = 0$.
\end{proposition}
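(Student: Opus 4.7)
The calculation is a direct $\mathcal{L}_0$-analog of Proposition \ref{prop1} and Proposition \ref{prop8}, modified to account for three new scalar-curvature contributions: the extra $R\rho$ term in the continuity equation (\ref{4.7}), the extra scalar-curvature term in the energy density $E_0$, and the Ricci-flow evolution $\frac{d(\dvol_M)}{dt} = -R\, \dvol_M$ from (\ref{3.2}). The strategy is the standard first-variation pattern: differentiate $E_0$ directly in $u$; use (\ref{4.7}) (tested against a convenient function) to eliminate the $\frac{\partial\phi}{\partial u}$ term; integrate by parts in $t$ to extract the boundary contribution; verify that the scalar-curvature terms combine correctly.

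The first step is to differentiate $E_0$ in $u$. Since $g(t)$ does not depend on $u$, this gives
\begin{equation*}
\frac{dE_0}{du} \: = \: \int_{t_0}^{t_1} \int_M \langle \nabla\phi,\, \nabla \tfrac{\partial\phi}{\partial u}\rangle\, \rho\, \dvol_M\, dt \: + \: \frac12 \int_{t_0}^{t_1} \int_M (|\nabla\phi|^2 + R)\, \tfrac{\partial\rho}{\partial u}\, \dvol_M\, dt.
\end{equation*}
The second step is to observe that, for any fixed $f \in C^\infty(M)$, equation (\ref{4.7}) together with (\ref{3.2}) is equivalent to the weak identity
\begin{equation*}
\frac{d}{dt}\int_M f\, \rho\, \dvol_M \: = \: \int_M \langle \nabla f,\, \nabla \phi\rangle\, \rho\, \dvol_M
\end{equation*}
(the $R\rho$ term in (\ref{4.7}) is exactly compensated by the $-R\,\dvol_M$ from the evolving volume form). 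Differentiating this identity in $u$ and then specializing $f = \phi$ yields
\begin{equation*}
\int_M \langle \nabla\phi,\, \nabla \tfrac{\partial\phi}{\partial u}\rangle\, \rho\, \dvol_M \: = \: \int_M \phi\, \tfrac{\partial^2 \rho}{\partial t\, \partial u}\, \dvol_M \: - \: \int_M \phi R\, \tfrac{\partial\rho}{\partial u}\, \dvol_M \: - \: \int_M |\nabla\phi|^2\, \tfrac{\partial\rho}{\partial u}\, \dvol_M.
\end{equation*}
The third step is integration by parts in $t$: applying (\ref{3.2}) once more,
\begin{equation*}
\int_M \phi\, \tfrac{\partial^2 \rho}{\partial t\, \partial u}\, \dvol_M \: = \: \frac{d}{dt}\int_M \phi\, \tfrac{\partial\rho}{\partial u}\, \dvol_M \: - \: \int_M \tfrac{\partial\phi}{\partial t}\, \tfrac{\partial\rho}{\partial u}\, \dvol_M \: + \: \int_M \phi R\, \tfrac{\partial\rho}{\partial u}\, \dvol_M.
\end{equation*}

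Assembling these three steps and integrating in $t$ from $t_0$ to $t_1$, the total $t$-derivative produces exactly the boundary term on the right-hand side of (\ref{4.12}). The two $\phi R\, \frac{\partial\rho}{\partial u}$ bulk integrals---one from step two, one from step three---cancel, while the $|\nabla\phi|^2$ and $R$ contributions combine to give the single factor $-\bigl(\frac{\partial\phi}{\partial t} + \frac12|\nabla\phi|^2 - \frac12 R\bigr)$ in the bulk integrand of (\ref{4.12}). The only genuinely delicate point is this bookkeeping of the three scalar-curvature sources: $E_0$ itself contributes $+\frac12 R\, \frac{\partial\rho}{\partial u}$, (\ref{4.7}) tested against $\phi$ contributes $-\phi R\, \frac{\partial\rho}{\partial u}$, and the evolving volume form contributes $+\phi R\, \frac{\partial\rho}{\partial u}$ through the integration by parts in $t$. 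The $|\nabla\phi|^2$ manipulation is identical to the static case of Proposition \ref{prop1}, so no new obstacle arises there.
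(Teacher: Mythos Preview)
Your proposal is correct and follows exactly the approach the paper indicates (``similar to that of Proposition~\ref{prop1}''), carrying out in full the details that the paper omits: you differentiate $E_0$ in $u$, use the continuity equation~(\ref{4.7}) to eliminate $\partial\phi/\partial u$, integrate by parts in $t$, and verify the scalar-curvature bookkeeping. Your tracking of the three $R$-contributions (from the energy density, from~(\ref{4.7}), and from the evolving volume form) is precisely the new feature relative to Proposition~\ref{prop1}, and the cancellation of the two $\phi R\,\partial\rho/\partial u$ terms is handled correctly.
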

\begin{proof}
The proof is similar to that of Proposition \ref{prop1}. We omit the details.
\end{proof}

From (\ref{4.12}), the Euler-Lagrange equation for $E_0$ is 
\begin{equation} \label{4.13}
\frac{\partial \phi}{\partial t} \: = \: - \: \frac12 \: |\nabla \phi|^2
\: + \: \frac12 \: R \: + \: \alpha(t),
\end{equation}
where $\alpha \in C^\infty([t_0, t_1])$. Changing $\phi$ by a
spatially-constant function, we can assume that $\alpha = 0$, so
\begin{equation} \label{4.14}
\frac{\partial \phi}{\partial t} \: = \: - \: \frac12 \: |\nabla \phi|^2
\: + \: \frac12 \: R.
\end{equation}
If a smooth curve in $P^\infty(M)$ minimizes 
$E_0$, relative to its endpoints, then it will satisfy (\ref{4.14}).
For each $t_0  \le t^\prime < t^{\prime \prime} \le t_1$,
the viscosity solution of 
(\ref{4.14}) satisfies
\begin{equation} \label{4.15}
\phi(t^{\prime \prime})(m^{\prime \prime}) \: = \: 
\inf_{m^\prime \in M} \left( \phi(t^\prime)(m^\prime) \: + \:
L_0^{t^\prime, t^{\prime \prime}}(m^\prime, m^{\prime \prime})
\right). 
\end{equation}
Then the solution of (\ref{4.7}) satisfies 
\begin{equation} \label{4.16}
\rho(t^{\prime \prime}) \: \dvol_M \: = \: 
(F_{t^\prime, t^{\prime \prime}})_* (\rho(t^\prime) \: \dvol_M), 
\end{equation}
where the transport map
$F_{t^\prime, t^{\prime \prime}} \: : \: M \rightarrow M$ is given by
\begin{equation} \label{4.17}
F_{t^\prime, t^{\prime \prime}}(m^\prime) \: = \: 
{\mathcal L}_0\exp_{m^\prime}^{t^\prime, t^{\prime \prime}} 
\left( \nabla_{m^\prime}
\phi(t^\prime) \right).
\end{equation}

We now do certain calculations in an Otto calculus that
is adapted to the Ricci flow background.

\begin{proposition} \label{prop10}
Suppose that (\ref{4.7}) and (\ref{4.14}) are satisfied.
Then
\begin{equation} \label{4.18}
\frac{d}{dt} \int_M \phi \rho \: \dvol_M \: = \:
\frac12 \int_M \left( |\nabla \phi|^2 \: + \: R \right) \: \rho \: \dvol_M,
\end{equation}

\begin{equation} \label{4.19}
\frac12 \: \frac{d}{dt} \int_M |\nabla \phi|^2 \: 
\rho \: \dvol_M \: = \: \int_M \left( \Ric(\nabla \phi, \nabla \phi)
\: + \: \frac12 \: \langle \nabla R, \nabla \phi \rangle \right) \: \rho \: 
\dvol_M,
\end{equation}

\begin{equation} \label{4.20}
\frac{d}{dt} \int_M \rho \: \log(\rho) \: 
\dvol_M \: = \: \int_M \left( \langle \nabla \rho, \nabla \phi \rangle \: + \: 
R \: \rho \right) 
\dvol_M,
\end{equation}

\begin{equation} \label{4.21}
\frac{d}{dt} \int_M R \rho \: 
\dvol_M \: = \: \int_M \left( R_t \: + \: 
\langle \nabla R, \nabla \phi \rangle \right) \: \rho \: 
\dvol_M
\end{equation}

and
\begin{align} \label{4.22}
& \frac{d}{dt} \int_M \langle \nabla \rho, \nabla \phi \rangle \: 
\dvol_M \: = \\
& \int_M \left( |\Hess \phi|^2 \: + \: \Ric(\nabla \phi, \nabla \phi)
\: - \: 2 \: \langle \Ric, \Hess \phi \rangle \: - \: \frac12 
\: \nabla^2 R \right) \rho \: \dvol_M. \notag
\end{align}
\end{proposition}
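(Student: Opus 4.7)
The plan is to differentiate each integrand in $t$ directly, using three ingredients in combination: the continuity equation (\ref{4.7}) for $\rho$, the Hamilton--Jacobi equation (\ref{4.14}) for $\phi$, and the Ricci flow identities $\partial_t g^{ij} = 2\Ric^{ij}$ and $\partial_t \dvol_M = - R\dvol_M$ that follow from (\ref{3.1}) and (\ref{3.2}). Four of the five identities reduce essentially to careful bookkeeping; only (\ref{4.22}) requires nontrivial additional input.

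For (\ref{4.18}), I expand $\frac{d}{dt}\int_M \phi\rho\dvol_M$ into three pieces from $\partial_t\phi$, $\partial_t\rho$, and $\partial_t\dvol_M$, substitute (\ref{4.14}) and (\ref{4.7}), integrate $-\int\phi\nabla^i(\rho\nabla_i\phi)\dvol_M$ by parts to produce $\int |\nabla\phi|^2\rho\dvol_M$, and watch the three copies of $R\phi\rho$ telescope to leave the stated expression. For (\ref{4.20}), I use $\partial_t(\rho\log\rho\dvol_M) = (\log\rho+1)(\partial_t\rho)\dvol_M - R\rho\log\rho\dvol_M$, substitute (\ref{4.7}), integrate the $(\log\rho)\nabla^i(\rho\nabla_i\phi)$ term by parts, and again let the $R\rho\log\rho$ contributions cancel. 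Identity (\ref{4.21}) is an immediate substitution: the $R^2\rho$ terms from $\partial_t\rho$ and $\partial_t\dvol_M$ cancel and $-\int R\nabla^i(\rho\nabla_i\phi)\dvol_M$ integrates by parts to $\int\langle\nabla R,\nabla\phi\rangle\rho\dvol_M$. For (\ref{4.19}), differentiating $|\nabla\phi|^2 = g^{ij}\partial_i\phi\partial_j\phi$ produces a $2\Ric(\nabla\phi,\nabla\phi)\rho$ contribution from $\partial_t g^{ij}$, plus a term $2\langle\nabla\phi,\nabla\partial_t\phi\rangle\rho$; substituting (\ref{4.14}) in the latter and using (\ref{4.7}) after an integration by parts, all occurrences of $R|\nabla\phi|^2\rho$ and $\langle\nabla\phi,\nabla|\nabla\phi|^2\rangle\rho$ cancel, leaving exactly $\Ric(\nabla\phi,\nabla\phi)$ and $\frac12\langle\nabla R,\nabla\phi\rangle$.

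The serious identity is (\ref{4.22}). My plan is to rewrite
\[
\int_M\langle\nabla\rho,\nabla\phi\rangle\dvol_M \: = \: -\int_M \rho \: \nabla^2\phi \: \dvol_M
\]
and differentiate the right-hand side. The key input is the Ricci-flow commutation formula
\[
\partial_t (\nabla^2\phi) \: = \: \nabla^2(\partial_t\phi) \: + \: 2\langle \Ric, \Hess\phi \rangle,
\]
which I would derive from $\partial_t g^{ij} = 2\Ric^{ij}$ together with $g^{ij}\partial_t\Gamma^k_{ij}=0$ (a consequence of the contracted Bianchi identity). Substituting (\ref{4.14}) in $\nabla^2(\partial_t\phi)$ yields $-\frac12\nabla^2|\nabla\phi|^2 + \frac12\nabla^2 R$, and the Bochner identity $\frac12\nabla^2|\nabla\phi|^2 = |\Hess\phi|^2 + \langle\nabla\phi,\nabla\nabla^2\phi\rangle + \Ric(\nabla\phi,\nabla\phi)$ then produces the terms $|\Hess\phi|^2$, $\Ric(\nabla\phi,\nabla\phi)$ and $-\frac12 \nabla^2 R$ seen in (\ref{4.22}). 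The cross-term $\langle\nabla\phi,\nabla\nabla^2\phi\rangle$ exactly cancels against the analogous contribution arising from $\int (\partial_t\rho) \nabla^2\phi\dvol_M$ after integrating by parts using (\ref{4.7}), and the $R\rho\nabla^2\phi$ terms from $\partial_t\rho$ and from $\partial_t\dvol_M$ cancel against each other. The expected obstacle is precisely this commutation formula under Ricci flow and the bookkeeping of the various $R$-weighted and $\langle\nabla\phi,\nabla\nabla^2\phi\rangle$ cross terms; once those are organized, the identity falls out.
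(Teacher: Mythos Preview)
Your proposal is correct and, for identities (\ref{4.18})--(\ref{4.21}), follows the paper's proof essentially line by line: direct differentiation, substitution of (\ref{4.7}) and (\ref{4.14}), one integration by parts, and the expected cancellations.

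For (\ref{4.22}) your route differs slightly from the paper's. The paper differentiates $\int_M\langle\nabla\rho,\nabla\phi\rangle\,\dvol_M$ as written, obtaining a term $2\int_M\Ric(\nabla\rho,\nabla\phi)\,\dvol_M$ from $\partial_t g^{ij}$ which it then converts, via integration by parts and the contracted Bianchi identity, into $-\int_M(\langle\nabla R,\nabla\phi\rangle+2\langle\Ric,\Hess\phi\rangle)\rho\,\dvol_M$; the Bochner identity is applied afterwards to the combination $-\frac12\langle\nabla|\nabla\phi|^2,\nabla\rho\rangle-\langle\nabla\nabla^i(\rho\nabla_i\phi),\nabla\phi\rangle$. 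You instead integrate by parts \emph{first} to $-\int_M\rho\,\nabla^2\phi\,\dvol_M$ and package the same two ingredients into the single commutation formula $\partial_t(\nabla^2\phi)=\nabla^2(\partial_t\phi)+2\langle\Ric,\Hess\phi\rangle$ (your observation $g^{ij}\partial_t\Gamma^k_{ij}=0$ is exactly the contracted Bianchi identity), after which Bochner handles $\nabla^2|\nabla\phi|^2$. The content is identical; your organization is arguably a little cleaner since the Ricci--Hessian cross term appears directly rather than through an auxiliary integration by parts.
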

\begin{proof}
For (\ref{4.18}),
\begin{align} \label{4.23}
& \frac{d}{dt} \int_M \phi \rho \: \dvol_M \: = \\
& \int_M \left( \left( - \: \frac12 \: |\nabla \phi|^2
\: + \: \frac12 \: R \right) \: \rho \: + \: \phi \left(
- \: 
\nabla^i \left( \rho \nabla_i \phi \right) \: + \: R \rho \right)
\: - \: R \phi \rho \right) \: \dvol_M \: = \notag \\
& \frac12 \int_M \left( |\nabla \phi|^2 \: + \: R \right) \: \rho \: \dvol_M.
\notag
\end{align}
For (\ref{4.19}),
\begin{align} \label{4.24}
& \frac12 \: \frac{d}{dt} \int_M |\nabla \phi|^2 \: 
\rho \: \dvol_M \: = \\
& \int_M \left( \Ric(\nabla \phi, \nabla \phi) \: \rho \: + \: 
\left\langle \nabla \phi, \nabla \left( - \: \frac12 \: |\nabla \phi|^2
\: + \: \frac12 \: R \right) \right\rangle\: \rho  \: + \right. \notag \\ 
& \left.
\: \: \: \: \: \: \: \: \: \: \: \: 
 \frac12 \: |\nabla \phi|^2 \:
\left(
- \: 
\nabla^i \left( \rho \nabla_i \phi \right) \: + \: R \rho \right) \: - \:
\frac12 \: R |\nabla \phi|^2 \: \rho  \right) \: \dvol_M \: = \notag \\
& \int_M \left( \Ric(\nabla \phi, \nabla \phi)
\: + \: \frac12 \: \langle \nabla R, \nabla \phi \rangle \right) \: \rho \: 
\dvol_M. \notag
\end{align}
For (\ref{4.20}),
\begin{align} \label{4.25}
& \frac{d}{dt} \int_M \rho \: \log(\rho) \: 
\dvol_M \: = \\
& \int_M \left( (\log(\rho) + 1) \left(
- \: 
\nabla^i \left( \rho \nabla_i \phi \right) \: + \: R \rho \right)
\: - \:  \rho \: \log(\rho) \: R \right) \: \dvol_M \: = \notag \\
& \int_M \left( \langle \nabla \rho, \nabla \phi \rangle \: + \: 
R \: \rho \right) 
\dvol_M. \notag
\end{align}
For (\ref{4.21}),
\begin{align} \label{4.26}
\frac{d}{dt} \int_M R \rho \: 
\dvol_M \: & = \: \int_M \left( R_t \rho \: + \: R
\left(
- \: 
\nabla^i \left( \rho \nabla_i \phi \right) \: + \: R \rho \right)
\: - \: R^2 \rho \right)
\: \dvol_M \\
& = 
\: \int_M \left( R_t \: + \: 
\langle \nabla R, \nabla \phi \rangle \right) \: \rho \: 
\dvol_M. \notag
\end{align}
For (\ref{4.22}),
\begin{align} \label{4.27}
& \frac{d}{dt} \int_M \langle \nabla \rho, \nabla \phi \rangle \: 
\dvol_M \: = \\ 
& \int_M \left( 2 \Ric(\nabla \rho, \nabla \phi) \: + \:
\left\langle \nabla \left( - \: 
\nabla^i \left( \rho \nabla_i \phi \right) \: + \: R \rho \right),
\nabla \phi \right\rangle
\: + \right. \notag \\
& \left. \: \: \: \: \: \: \: \: \: \: \: \:  
\left\langle \nabla \rho, \nabla \left( - \: \frac12 \: |\nabla \phi|^2
\: + \: \frac12 \: R \right) \right\rangle 
\: - \: R  \langle \nabla \rho, \nabla \phi \rangle \right) \:
\dvol_M. \notag
\end{align}
Now
\begin{equation} \label{4.28}
2 \int_M \Ric(\nabla \rho, \nabla \phi) \: \dvol_M \: = \: - \:
\int_M \left( \langle \nabla R, \nabla \phi \rangle \: + \: 
2 \: \langle \Ric, \Hess \phi \rangle \right) \: \rho \: \dvol_M 
\end{equation}
and
\begin{align} \label{4.29}
& \int_M \left( \left\langle \nabla \left(- \:  
\nabla^i \left( \rho \nabla_i \phi \right) \right),
\nabla \phi \right\rangle \: + \: 
\left\langle \nabla \rho, \nabla \left(- \: \frac12 \: |\nabla \phi|^2
 \right) \right\rangle \right) \: \dvol_M  \: = \\
& \int_M \left(- \: \langle \nabla \phi, \nabla (\nabla^2 \phi) \rangle
\: + \: \frac12 \: \nabla^2 |\nabla \phi|^2 \right) \: \rho \: \dvol_M \: = 
\notag \\
& \int_M \left( |\Hess \phi|^2 \: + \: \Ric(\nabla \phi, \nabla \phi) \right)
\: \rho \: \dvol_M. \notag
\end{align}
Thus
\begin{align} \label{4.30}
& \frac{d}{dt} \int_M \langle \nabla \rho, \nabla \phi \rangle \: 
\dvol_M \: = \\
& \int_M \left( 
|\Hess \phi|^2 \: + \: \Ric(\nabla \phi, \nabla \phi) 
\: - \: 2 \: \langle \Ric, \Hess \phi \rangle \right) \: \rho \: \dvol_M
\: + \notag \\ 
& \int_M \left( \: - \: 
\langle \nabla R, \nabla \phi \rangle \: \rho \: + \:
\langle \nabla(R \rho), \nabla \phi \rangle \: + \: 
\frac12 \: \langle \nabla \rho, \nabla R \rangle \: - \:
R \langle \nabla \rho, \nabla \phi \rangle \right) \: \dvol_M \: = \notag \\
& \int_M \left( |\Hess \phi|^2 \: + \: \Ric(\nabla \phi, \nabla \phi)
\: - \: 2 \: \langle \Ric, \Hess \phi \rangle \: - \: \frac12 
\: \nabla^2 R \right) \rho \: \dvol_M. \notag 
\end{align}
This proves the proposition.
\end{proof}

\begin{corollary} \label{cor4}
Under the hypotheses of Proposition \ref{prop10},
\begin{equation} \label{4.31}
\frac{d^2}{dt^2} \int_M \rho \: \log(\rho) \: \dvol_M \: = \:
\int_M \left( |\Ric \: - \: \Hess \phi|^2 \: + \:
\frac12 \: H(\nabla \phi) \right) \:
\rho \: \dvol_M,
\end{equation}
where
\begin{equation} \label{4.32}
H(X) \: = \: R_t \: + \: 2 \langle \nabla R, X \rangle \: + \:
2 \: \Ric(X,X)
\end{equation}
is Hamilton's trace Harnack expression.
Also,
\begin{equation} \label{4.33}
\frac{d^2}{dt^2} \int_M \left( \rho \: \log(\rho) \: - \: 
\phi \: \rho \right) \: \dvol_M \: = \:
\int_M |\Ric \: - \: \Hess \phi|^2 \:
\rho \: \dvol_M.
\end{equation}
In particular, $\int_M \left( \rho \: \log(\rho) \: - \: 
\phi \: \rho \right) \: \dvol_M$ is convex in $t$.
\end{corollary}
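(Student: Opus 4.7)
The plan is to differentiate the first-order identities in Proposition \ref{prop10} a second time and then match the resulting terms to the Harnack expression $H(\nabla \phi)$. Concretely, starting from (\ref{4.20}), I would write
\begin{equation*}
\frac{d^2}{dt^2} \int_M \rho \log \rho \: \dvol_M \: = \:
\frac{d}{dt} \int_M \langle \nabla \rho, \nabla \phi \rangle \: \dvol_M
\: + \: \frac{d}{dt} \int_M R \rho \: \dvol_M,
\end{equation*}
and then substitute (\ref{4.21}) and (\ref{4.22}) on the right-hand side.

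The next step is to regroup the resulting integrand. Using the algebraic identity
\begin{equation*}
|\Hess \phi|^2 \: - \: 2 \langle \Ric, \Hess \phi \rangle \: = \:
|\Ric \: - \: \Hess \phi|^2 \: - \: |\Ric|^2,
\end{equation*}
the combined integrand becomes $|\Ric - \Hess \phi|^2$ plus the leftover
\begin{equation*}
R_t \: - \: |\Ric|^2 \: - \: \frac12 \nabla^2 R \: + \:
\langle \nabla R, \nabla \phi \rangle \: + \: \Ric(\nabla \phi, \nabla \phi).
\end{equation*}
The key observation is that, under the Ricci flow (\ref{3.1}), the scalar curvature satisfies the Hamilton evolution equation $R_t = \nabla^2 R + 2 |\Ric|^2$. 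This identity converts the leftover expression into exactly $\frac12 R_t + \langle \nabla R, \nabla \phi \rangle + \Ric(\nabla \phi, \nabla \phi) = \frac12 H(\nabla \phi)$, yielding (\ref{4.31}). I anticipate this regrouping to be the only mildly subtle step — essentially a bookkeeping task once one recognizes that the Harnack quantity is the right target.

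For (\ref{4.33}), I would compute $\frac{d^2}{dt^2} \int_M \phi \rho \: \dvol_M$ separately by differentiating (\ref{4.18}) once more and substituting (\ref{4.19}) and (\ref{4.21}):
\begin{equation*}
\frac{d^2}{dt^2} \int_M \phi \rho \: \dvol_M \: = \:
\int_M \left( \Ric(\nabla \phi, \nabla \phi) \: + \: \langle \nabla R, \nabla \phi \rangle \: + \: \frac12 R_t \right) \rho \: \dvol_M \: = \:
\frac12 \int_M H(\nabla \phi) \: \rho \: \dvol_M.
\end{equation*}
Subtracting this from (\ref{4.31}) precisely cancels the $\frac12 H(\nabla \phi)$ term and leaves the manifestly nonnegative integrand $|\Ric - \Hess \phi|^2 \rho$, giving both (\ref{4.33}) and the convexity of $\int_M (\rho \log \rho - \phi \rho) \dvol_M$ in $t$.
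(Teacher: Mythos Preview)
Your proposal is correct and follows exactly the route the paper intends: the paper's proof consists of the single sentence ``This follows from Proposition~\ref{prop10}, along with the equation $R_t = \nabla^2 R + 2|\Ric|^2$,'' and you have simply spelled out the bookkeeping that this sentence suppresses. Your identification of the scalar curvature evolution equation as the key input, and your use of (\ref{4.18})--(\ref{4.22}) to regroup into $|\Ric-\Hess\phi|^2 + \tfrac12 H(\nabla\phi)$, are precisely what is needed.
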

\begin{proof}
This follows from Proposition \ref{prop10}, along with the equation
\begin{equation} \label{4.34}
R_t \: = \: \nabla^2 R \: + \: 2 \: |\Ric|^2.
\end{equation}
\end{proof}

We now give the analog of Corollary \ref{cor4} for $P(M)$, using
results from \cite{Bernard-Buffoni (2007)} and
\cite[Chapters 7,10,13]{Villani2}. 
Let $c \: : \: [t_0, t_1] \rightarrow P(M)$ be a minimizing
curve for ${\mathcal A}_0$ relative to its endpoints, which
we assume to be absolutely continuous probability measures.
Then $c(t) \: = \: (F_{t_0, t})_* c(t_0)$, where there is a
semiconvex function $\phi_0 \in C(M)$ so that
$F_{t_0, t}(m_0) \: = \: {\mathcal L}_0 \exp_{m_0}^{t_0, t}
(\nabla_{m_0} \phi_0)$.
Define $\phi(t) \in C(M)$ by
\begin{equation} \label{4.35}
\phi(t)(m) \: = \: 
\inf_{m_0 \in M} \left( \phi_0(m_0) \: + \:
L_0^{t_0, t}(m_0, m) \right).
\end{equation}
Define ${\mathcal E} \: : \: P(M) \rightarrow \R \cup \{ \infty \}$
as in (\ref{2.38}).
\begin{proposition} \label{prop11}
${\mathcal E}(c(t)) \: - \: \int_M \phi(t) \: dc(t)$ is convex in $t$.
\end{proposition}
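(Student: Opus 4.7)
The plan is to mirror the Eulerian calculation of Corollary \ref{cor4} in the Lagrangian framework, pulling the integral back to the fixed measure $c(t_0)$ and showing that the resulting integrand is convex in $t$ for $c(t_0)$-a.e.\ starting point. The Eulerian identity (\ref{4.33}) already gives the answer on $P^\infty(M)$; the task is to justify it on $P(M)$ when one only has a semiconvex initial potential $\phi_0$.

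First I would invoke the structural results of Bernard--Buffoni \cite{Bernard-Buffoni (2007)} and \cite[Chapters 7,10,13]{Villani2} for minimizing curves of time-dependent Lagrangian actions. This gives: the minimizer $c$ is unique, $c(t) = (F_{t_0,t})_* c(t_0)$ with $F_{t_0,t}(m_0) = {\mathcal L}_0 \exp_{m_0}^{t_0,t}(\nabla_{m_0}\phi_0)$, and for $c(t_0)$-a.e.\ $m_0$ the potential $\phi_0$ is twice differentiable at $m_0$ (Aleksandrov), so the trajectory $\gamma_{m_0}(t) := F_{t_0,t}(m_0)$ is a smooth ${\mathcal L}_0$-minimizer satisfying (\ref{4.2}), no two such trajectories cross before time $t_1$, and the transport map $F_{t_0,t}$ has a well-defined Jacobian $J_t(m_0)$ relative to the time-$t$ Riemannian density. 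Along each such trajectory, (\ref{4.35}) plus differentiability gives $\phi(t)$ differentiable at $\gamma_{m_0}(t)$ with $\dot\gamma_{m_0}(t) = \nabla \phi(t,\gamma_{m_0}(t))$ and the Hamilton--Jacobi equation (\ref{4.14}) holds.

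Second, using the Jacobian formula $\rho(t)(\gamma_{m_0}(t)) \, J_t(m_0) = \rho_0(m_0)$ a.e., change variables to rewrite
\begin{align*}
{\mathcal E}(c(t)) - \int_M \phi(t)\, dc(t)
= \int_M \bigl[\log\rho_0(m_0) - \log J_t(m_0) - \phi(t,\gamma_{m_0}(t))\bigr]\, dc(t_0)(m_0).
\end{align*}
Because the reference measure $c(t_0)$ is independent of $t$, it suffices to show that for $c(t_0)$-a.e.\ $m_0$ the quantity $F(t,m_0) := -\log J_t(m_0) - \phi(t,\gamma_{m_0}(t))$ is convex in $t$. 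This is a one-trajectory computation that reproduces the Eulerian steps of Proposition \ref{prop10}: $\frac{d}{dt}\log J_t = \nabla^2\phi - R$ from the Monge--Amp\`ere ODE along the ${\mathcal L}_0$-flow with Ricci-flow background, while $\frac{d}{dt}\phi(t,\gamma_{m_0}(t)) = \frac12|\nabla\phi|^2 + \frac12 R$ from (\ref{4.14}). Differentiating once more and using the standard Jacobi-field identity together with (\ref{4.34}) yields, pointwise along the trajectory,
\begin{align*}
\frac{d^2 F}{dt^2}(t,m_0) = |\Ric - \Hess\phi|^2\bigl(\gamma_{m_0}(t)\bigr) \;\geq\; 0,
\end{align*}
exactly the integrand of (\ref{4.33}). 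Integrating against $dc(t_0)$ gives the convexity of ${\mathcal E}(c(t)) - \int_M \phi(t)\, dc(t)$.

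The main obstacle is the regularity step: justifying the pointwise Monge--Amp\`ere evolution of $\log J_t$ and the chain rule for $\phi(t,\gamma_{m_0}(t))$ at $c(t_0)$-a.e.\ $m_0$, given only semiconvexity of $\phi_0$ and the possible presence of ${\mathcal L}_0$-cut locus points. The Aleksandrov second differentiability of semiconvex functions, together with the no-crossing property of minimizing trajectories away from a negligible set and the time-dependent Jacobi equation derived from (\ref{4.2}), handles this; these are precisely the tools packaged in \cite{Bernard-Buffoni (2007)} and \cite[Chapters 7,10,13]{Villani2} for Lagrangian costs of the type (\ref{4.1}). Once the pointwise identity holds on a full-measure set, Fatou/Fubini reduces the convexity of the integral in $t$ to the pointwise convexity, completing the proof.
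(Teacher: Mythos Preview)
Your approach is correct and is essentially the Lagrangian argument the paper has in mind: the paper's own proof of Proposition \ref{prop11} simply refers forward to the proof of Proposition \ref{prop16}, which likewise pulls back to $c(\tau_0)$ via the transport maps, computes the $\phi$-term exactly along ${\mathcal L}$-trajectories (equations (\ref{7.3})--(\ref{7.8})), and combines this with a Lagrangian second-derivative estimate for the entropy.

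The one organizational difference worth noting is that the paper does not combine $-\log J_t$ and $\phi(t,\gamma_{m_0}(t))$ into a single pointwise quantity. Instead it treats the two pieces separately: for ${\mathcal E}(c(t))$ it invokes Topping's Lagrangian calculation (the ${\mathcal L}_0$-analog of the inequality in Remark \ref{addedremark}), which yields only an \emph{inequality} $\frac{d^2}{dt^2}{\mathcal E}(c(t)) \ge \frac12\int_M H(\nabla\phi)\,dc(t)$, while for $\int_M\phi(t)\,dc(t)$ it computes exactly along each trajectory and obtains $\frac{d^2}{dt^2}\phi(\gamma(t)) = \frac12 H(X)$; the $H$-terms then cancel. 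Your version packages the Jacobian and the potential together and asserts the pointwise identity $\frac{d^2F}{dt^2} = |\Ric - \Hess\phi|^2$. Strictly speaking this equality holds with the Jacobi-field matrix $B(t)$ in place of $\Hess\phi$, and one then identifies $B(t)$ with the Aleksandrov Hessian of $\phi(t)$ at $\gamma_{m_0}(t)$ for $c(t_0)$-a.e.\ $m_0$; either way the right-hand side is nonnegative, so the convexity follows. Your route is slightly more self-contained since it does not outsource the entropy estimate to \cite{Topping}, but the underlying computation is the same.
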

\begin{proof}
The proof is along the lines of the proof of Proposition \ref{prop16} ahead.
\end{proof}

\begin{remark}
The function $\phi$ also enters as a solution of the dual
Kantorovitch problem.  See \cite[Theorem 7.36]{Villani2} (where
what we call $\phi$ is called $\psi$).
\end{remark}

\begin{remark}
Suppose that the Ricci flow solution $(M, g(\cdot))$ is a gradient
steady soliton, meaning that it is a Ricci flow solution with
$\Ric \: + \: \Hess(f) \: = \: 0$,
where $f$ satisfies $\frac{\partial f}{\partial t} \: = \: 
|\nabla f|^2$. Differentiating spatially and temporally, one shows that
$|\nabla f|^2 \: + \: R \: = \: C$ for some constant
$C$. Then there is a solution of (\ref{4.13}) with $\phi \: = \: - \: f$
and $\alpha \: = \: - \: \frac12 \: C$. If $\rho$ is transported
along the static vector field $- \nabla f$, i.e. satisfies (\ref{4.7}), 
then (\ref{4.33}) says that 
$\int_M (\rho \: \log(\rho) \: + \: f \: \rho) \: \dvol_M$ is
linear in $t$. 
\end{remark}

\section{Monotonicity of the ${\mathcal L}_0$-cost under a
backward heat flow}
\label{section5}

In this section we discuss the ${\mathcal L}_0$-cost between two
measures that each evolve under the backward heat flow.
The results are analogs of results of Topping
for the ${\mathcal L}$-cost \cite{Topping}.  We first
compute the variation of $E_0$ with respect to
a one-parameter family of curves that begin and end at shifted times.
We use this to show, within the Otto calculus, that if 
measures $c^\prime(\cdot)$ and $c^{\prime \prime}(\cdot)$ 
evolve under the backward heat flow then the 
${\mathcal L}_0$-cost between $c^\prime(t^\prime + u)$ and 
$c^{\prime \prime}(t^{\prime \prime} + u)$ is nondecreasing in $u$.
We then show that this implies the monotonicity of Perelman's
${\mathcal F}$-functional, in analogy to what Topping did for
Perelman's ${\mathcal W}$-functional.

\begin{proposition} \label{prop12}
Take $t_0 < t^\prime < t^{\prime \prime} < t_1$.
For small $\epsilon$, suppose that
$c \: : \: [t^\prime, t^{\prime \prime}] \times (- \epsilon, \epsilon) 
\rightarrow P^\infty(M)$ is
a smooth map, where $c \equiv c(t,u)$.
Define $c_u \: : \: 
[t^\prime + u, t^{\prime \prime} + u] \rightarrow P^\infty(M)$
by $c_u(t) \: = \: c(t-u, u)$.
Put $\mu^\prime = c_0(t^\prime)$ and 
$\mu^{\prime \prime} = c_0(t^{\prime \prime})$.
Suppose that $c_0$ is a minimizer for $E_0$ among curves from 
$[t^\prime, t^{\prime \prime}]$ to $P^\infty(M)$ whose endpoints are
$\mu^\prime$ and $\mu^{\prime \prime}$.
Put $V(t) \: = \: \frac{\partial c}{\partial u} \Big|_{u=0}$.
Then
\begin{equation} \label{5.1}
\frac{dE_0(c_u)}{du} \Bigg|_{u = 0} \: = 
\int_{t^\prime}^{t^{\prime \prime}}  \int_M
| \Ric \: - \: \Hess \phi |^2 \: \rho \: \dvol_M \: dt
\: + \:
\int_M \phi(t) \: \left( V(t) \: + \: 
\nabla^2 \rho \: \dvol_M
\right) 
\Bigg|_{t=t^\prime}^{t^{\prime \prime}}.
\end{equation}
\end{proposition}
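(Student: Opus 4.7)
The plan is to compute $\frac{dE_0(c_u)}{du}\big|_{u=0}$ directly via the Leibniz rule, use Proposition~\ref{prop9} together with the Hamilton--Jacobi equation for the minimizer $c_0$ to dispose of the interior, and then use Corollary~\ref{cor4} to rewrite the residual boundary combination into the stated $|\Ric - \Hess\phi|^2$ form.

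Write $E_0(c_u) = \frac{1}{2}\int_{t'+u}^{t''+u} J(u,t)\,dt$ with integrand $J(u,t) = \int_M(|\nabla\phi_u|^2 + R)\rho_u\,\dvol_M$. Leibniz splits $\frac{d}{du}E_0(c_u)\big|_{u=0}$ as $\frac{1}{2}[J(0,t'') - J(0,t')]$ (from the shifting endpoints) plus $\frac{1}{2}\int_{t'}^{t''}\partial_u J|_{u=0}\,dt$ (the derivative on the fixed interval $[t',t'']$). For each $u$ the pair $(\rho_u,\phi_u)$ satisfies the continuity equation (\ref{4.7}), so the second piece is exactly what Proposition~\ref{prop9} computes, yielding $\int_M\phi_0 W\dvol_M\big|_{t=t'}^{t''} - \int_{t'}^{t''}\int_M(\partial_t\phi_0 + \tfrac12|\nabla\phi_0|^2 - \tfrac12 R)W\dvol_M\,dt$, where $W(t) := \partial_u\rho_u(t)\big|_{u=0}$. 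The interior double integral vanishes since $\phi_0$ satisfies (\ref{4.14}).

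The key step is identifying $W$. From the measure identity $c_u(t) = c(t-u,u)$, differentiation in $u$ at fixed $t$ gives $\partial_u c_u(t)\big|_{u=0} = V(t) - \dot c_0(t)$, and combining the continuity equation (\ref{4.7}) with the volume-form evolution (\ref{3.2}) shows $\dot c_0(t) = -\nabla^i(\rho_0\nabla_i\phi_0)\dvol_M$. Hence $W(t)\dvol_M = V(t) + \nabla^i(\rho_0\nabla_i\phi_0)\dvol_M$. After the integration by parts $\int_M\phi_0\nabla^i(\rho_0\nabla_i\phi_0)\dvol_M = -\int_M|\nabla\phi_0|^2\rho_0\dvol_M$, the combined boundary contribution at each of $t',t''$ reduces to
$$\int_M\phi_0\,V + \tfrac{1}{2}\int_M(R - |\nabla\phi_0|^2)\rho_0\dvol_M.$$

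To match (\ref{5.1}), I then rewrite the scalar bracket. Combining (\ref{4.18}), (\ref{4.20}) with one integration by parts yields
$$\frac{d}{dt}\int_M(\rho_0\log\rho_0 - \phi_0\rho_0)\dvol_M = -\int_M\phi_0\nabla^2\rho_0\dvol_M + \tfrac12\int_M(R - |\nabla\phi_0|^2)\rho_0\dvol_M.$$
Integrating the identity (\ref{4.33}) of Corollary~\ref{cor4} from $t'$ to $t''$ then identifies $\big[\tfrac{1}{2}\int_M(R - |\nabla\phi_0|^2)\rho_0\dvol_M\big]_{t=t'}^{t''}$ with $\big[\int_M\phi_0\nabla^2\rho_0\dvol_M\big]_{t=t'}^{t''} + \int_{t'}^{t''}\int_M|\Ric - \Hess\phi_0|^2\rho_0\dvol_M\,dt$, giving (\ref{5.1}). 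The main obstacle is the correct identification of $W$: one must carefully distinguish the density $\rho_u(t)$ taken with respect to $\dvol_{M,g(t)}$ from $\rho(t-u,u)$ taken with respect to $\dvol_{M,g(t-u)}$, and account for the Ricci-flow Jacobian via (\ref{3.2}); after this, the final algebraic rearrangement through Corollary~\ref{cor4} is the pleasant surprise that produces the $|\Ric - \Hess\phi|^2$ integral.
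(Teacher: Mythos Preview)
Your argument is correct, and it reaches (\ref{5.1}) by a route that is organized differently from the paper's. The paper first performs the change of variables $t\mapsto t-u$ so that $E_0(c_u)$ is written as an integral over the fixed interval $[t',t'']$ with the \emph{metric} $g(t+u)$ carrying the $u$-dependence (equation (\ref{5.2})); it then differentiates using the Otto covariant derivative on $P^\infty(M)$ together with Proposition~\ref{prop7} for $\frac{dG}{dt}$, drops the Euler--Lagrange term by minimality, and finally converts the resulting $-\int\Ric(\nabla\phi,\nabla\phi)\rho+\tfrac12\int R_t\rho$ into the $|\Ric-\Hess\phi|^2$ form by integrating the raw identity (\ref{4.30}) and invoking (\ref{4.34}). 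You instead keep the moving limits, apply Leibniz, feed the interior piece directly into Proposition~\ref{prop9}, and then use the already-packaged second-derivative identity (\ref{4.33}) from Corollary~\ref{cor4} to absorb the residual scalar boundary term. What your approach buys is that it avoids the Otto covariant derivative and Proposition~\ref{prop7} entirely, at the price of a small extension argument (to make $J(u,t)$ defined for $t$ near $t',t''$ and small $u$ of both signs) and the bookkeeping you flag about distinguishing the density $\rho_u(t)$ relative to $\dvol_{M,g(t)}$ from $\rho(t-u,u)$ relative to $\dvol_{M,g(t-u)}$; the paper's change of variables trades that bookkeeping for the explicit $G_t$ computation. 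Both routes rest on the same underlying content of Proposition~\ref{prop10}.
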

\begin{proof}
For any $u \in (-\epsilon, \epsilon)$, we can write
\begin{equation} \label{5.2}
E_0(c_u) \: = \: \frac12 \int_{t^\prime}^{t^{\prime \prime}} 
\left( G \left( \frac{\partial c}{\partial t}, 
\frac{\partial c}{\partial t} \right) \: + \:
\int_M R \: c(t,u) \right) \: dt,
\end{equation}
where the integrand is evaluated using the metric at time $t+u$, and
the $c(t,u)$ in the term $R \: c(t,u)$ is taken to be a measure on $M$.
There is a well-defined notion of covariant derivative on $P^\infty(M)$
\cite[Proposition 2]{Lott (2008)}.
Letting $D$ denote directional covariant differentiation on $P^\infty(M)$,
\begin{align} \label{5.3}
& \frac{dE_0(c_u)}{du} \Bigg|_{u = 0} \: = \\
& \int_{t^\prime}^{t^{\prime \prime}} 
\left( \frac12 \: 
G_t \left( \frac{dc_0}{dt}, \frac{dc_0}{dt} \right) \: + \:
G \left( \frac{dc_0}{dt}, D_{\frac{dc_0}{dt}} V(t) 
\right) \: + \:
\frac12 \: \int_M R_t c_0(t) \: + \:
\frac12 \: \int_M R V(t) \right) \: dt \: = \notag \\ 
&
\int_{t^\prime}^{t^{\prime \prime}} 
\left( \frac12 \: 
G_t \left( \frac{dc_0}{dt}, \frac{dc_0}{dt} \right) \: + \:
G \left( D_{\frac{dc_0}{dt}} 
\left( \frac{dc_0}{dt} \right), V(t) 
\right) \: + \:
\frac12 \: \int_M R_t c_0(t) \: + \:
\frac12 \: \int_M R V(t) \right) \: dt \: + \notag \\
& G \left( 
\frac{dc_0}{dt}, V(t) 
\right) \Bigg|_{t=t^\prime}^{t^{\prime \prime}}. \notag
\end{align}
As $c_0$ is a minimizer,
\begin{equation} \label{5.4}
\frac{dE_0(c_u)}{du} \Bigg|_{u = 0} \: = 
\int_{t^\prime}^{t^{\prime \prime}} 
\left( \frac12 \: 
G_t \left( \frac{dc_0}{dt}, \frac{dc_0}{dt} \right) \: + \:
\frac12 \: \int_M R_t c_0(t) \right) \: dt \: + \:
G \left( 
\frac{dc_0}{dt}, V(t) 
\right) \Bigg|_{t=t^\prime}^{t^{\prime \prime}}.
\end{equation}

For any $f \in C^\infty(M)$,
\begin{equation} \label{5.5}
\frac{d}{dt} \int_M f \: \rho \: \dvol_M \: = \: 
\int_M \langle \nabla f, \nabla \phi \rangle \: \rho \: \dvol_M.
\end{equation}
This gives $\frac{dc_0}{dt}$ in terms of $\phi$.
Then from (\ref{2.4}) and Proposition \ref{prop7},
\begin{equation} \label{5.6}
\frac{dE_0(c_u)}{du} \Bigg|_{u = 0} \: = 
\int_{t^\prime}^{t^{\prime \prime}} \int_M
\left( - \:
\Ric(\nabla \phi, \nabla \phi) \: \rho \: \dvol_M \: + \:
\frac12 \: \int_M R_t c_0(t) \right) \: dt \: + \:
\int_M \phi(t) \: V(t) \Bigg|_{t=t^\prime}^{t^{\prime \prime}}.
\end{equation}
From (\ref{4.30}),
\begin{align} \label{5.7}
& \int_M \langle \nabla \rho, \nabla \phi \rangle \: 
\dvol_M \Bigg|_{t^\prime}^{t^{\prime \prime}} \: = \\
& \int_{t^\prime}^{t^{\prime \prime}}
\int_M \left( |\Hess \phi|^2 \: + \: \Ric(\nabla \phi, \nabla \phi)
\: - \: 2 \: \langle \Ric, \Hess \phi \rangle \: - \: \frac12 
\: \nabla^2 R \right) \rho \: \dvol_M \: dt. \notag
\end{align}
The proposition follows from the curvature evolution equation (\ref{4.34}),
(\ref{5.6}) and (\ref{5.7}).
\end{proof}

\begin{corollary} \label{cor5}
Under the hypotheses of Proposition \ref{prop12}, suppose that
each $c_u$ is a minimizer for $E_0$ relative to its endpoints.
Suppose that the endpoint 
measures $c_u(t^\prime+u) = c(t^\prime, u)$ and 
$c_u(t^{\prime \prime}+u) = c(t^{\prime \prime}, u)$ each
satisfy the backward heat equation, in the variable $u$ :
\begin{equation} \label{5.8}
\frac{dc}{du} \: = \: - \: \nabla^2 c.
\end{equation}
Then $C_0^{t^\prime+u, t^{\prime \prime}+u}(c_u(t^\prime+u),
c_u(t^{\prime \prime}+u))$ is 
nondecreasing in $u$.
\end{corollary}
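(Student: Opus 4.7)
The plan is to apply the first-variation formula of Proposition \ref{prop12} at each $u$, and to observe that the backward heat equation hypothesis on the endpoints is precisely what kills the boundary terms. First, since each $c_u$ is a minimizer of $E_0$ relative to its endpoints, (\ref{4.6}) gives
$$E_0(c_u) \: = \: C_0^{t^\prime + u,\, t^{\prime \prime} + u}\bigl(c_u(t^\prime + u),\: c_u(t^{\prime \prime} + u)\bigr),$$
so it suffices to prove that $u \mapsto E_0(c_u)$ is nondecreasing. By the translation invariance of the hypotheses in $u$ (reparametrizing about any base value $u_0$), it is enough to establish the derivative inequality at $u = 0$.

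Next, from Proposition \ref{prop12} we read off
$$\frac{dE_0(c_u)}{du}\Bigg|_{u=0} \: = \: \int_{t^\prime}^{t^{\prime \prime}} \int_M |\Ric - \Hess \phi|^2 \: \rho \: \dvol_M \: dt \: + \: \int_M \phi(t) \left( V(t) \: + \: \nabla^2 \rho \: \dvol_M \right) \Bigg|_{t=t^\prime}^{t^{\prime \prime}}.$$
Hypothesis (\ref{5.8}) says that the endpoint $u \mapsto c(t^\prime, u) = c_u(t^\prime + u)$ satisfies $\frac{d}{du} c \: = \: - \: \nabla^2 c$, and likewise at $t^{\prime \prime}$. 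Evaluating at $u = 0$, this yields $V(t^\prime) \: = \: - \: \nabla^2 \rho(t^\prime, 0) \: \dvol_M$ and the analogous identity at $t = t^{\prime \prime}$. Consequently the measure $V(t) + \nabla^2 \rho \: \dvol_M$ vanishes at both endpoints, and the boundary contribution drops out. What remains is
$$\frac{dE_0(c_u)}{du}\Bigg|_{u=0} \: = \: \int_{t^\prime}^{t^{\prime \prime}} \int_M |\Ric - \Hess \phi|^2 \: \rho \: \dvol_M \: dt \: \ge \: 0,$$
which proves the corollary.

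The step that carries the real content is the boundary-term calculation: the grouping $V(t) + \nabla^2 \rho \: \dvol_M$ in Proposition \ref{prop12} is exactly the obstruction to the endpoint measure satisfying the backward heat equation, so the monotonicity result should be read as a statement that this coupling between the transport problem and the backward heat flow is the right one. In the Otto calculus (formal) setting I do not anticipate any genuine obstacle; the natural worry, that the optimal $c_0$ may fail to be smooth and therefore may not live in $P^\infty(M)$, is precisely what motivates the parallel Lagrangian development in Sections \ref{section6}--\ref{section7}, and a fully rigorous statement on $P(M)$ would proceed along those lines.
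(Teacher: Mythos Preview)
Your proof is correct and follows exactly the approach the paper intends: Corollary~\ref{cor5} is stated without proof, as an immediate consequence of Proposition~\ref{prop12}, and you have supplied precisely those details --- applying (\ref{5.1}), noting that (\ref{5.8}) forces the boundary contribution to vanish, and reading off nonnegativity from the remaining $|\Ric - \Hess\phi|^2$ integral. The identification $E_0(c_u) = C_0^{t'+u,\,t''+u}(\ldots)$ is really an analog of Theorem~\ref{OW} for the ${\mathcal L}_0$-cost rather than a direct consequence of (\ref{4.6}), but this is exactly the formal identification the paper itself relies on throughout Sections~\ref{section4}--\ref{section5}.
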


We now give the general statement about the monotonicity of the
${\mathcal L}_0$-cost for two measures that evolve under the
backward heat flow,  without the extra assumption in Corollary \ref{cor5}
that minimizers $c_u$ stay in $P^\infty(M)$.
Its proof is an analog of Topping's proof
of the corresponding statement for the ${\mathcal L}$-cost
\cite{Topping}.

\begin{proposition} \label{prop13}
Suppose that $c^\prime \: : \: [t_0, t_1] \rightarrow P^\infty(M)$ and 
$c^{\prime \prime} \: : \: [t_0, t_1] \rightarrow P^\infty(M)$
satisfy (\ref{3.13}). 
Then $C_0^{t^\prime+u, t^{\prime \prime}+u}(c^\prime(t^\prime+u),
c^{\prime \prime}(t^{\prime \prime}+u))$ is 
nondecreasing in $u$.
\end{proposition}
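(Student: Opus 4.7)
The plan is to adapt Topping's dual-formulation argument for the $\mathcal{L}$-cost, substituting the $\mathcal{L}_0$-analog at each step, and thereby avoid assuming that minimizers of $E_0$ lie in $P^\infty(M)$. Fix $u$ and let $\bar c \: : \: [t^\prime+u, t^{\prime \prime}+u] \to P(M)$ be a displacement interpolation realizing the infimum defining $C_0^{t^\prime+u, t^{\prime \prime}+u}(c^\prime(t^\prime+u), c^{\prime \prime}(t^{\prime \prime}+u))$, with associated Kantorovich potential $\phi(t)$, a semiconvex viscosity solution of (\ref{4.14}) satisfying (\ref{4.15}). By Kantorovich duality for the cost $L_0^{t^\prime+u, t^{\prime \prime}+u}$,
\begin{equation*}
C_0^{t^\prime+u, t^{\prime \prime}+u}(c^\prime(t^\prime+u), c^{\prime \prime}(t^{\prime \prime}+u)) \: = \: \int_M \phi(t^{\prime \prime}+u) \, dc^{\prime \prime}(t^{\prime \prime}+u) \: - \: \int_M \phi(t^\prime+u) \, dc^\prime(t^\prime+u).
\end{equation*}

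Differentiating this identity in $u$, an envelope argument (first-order optimality in the dual variable $\phi$) shows that only the explicit $u$-dependence contributes. Using (\ref{4.14}) for the $u$-derivative of $\phi$ at each of the endpoint times and (\ref{3.13}) for the $u$-derivatives of $c^\prime$ and $c^{\prime \prime}$, and integrating by parts in the Laplacian terms, I obtain
\begin{equation*}
\frac{d}{du} \, C_0^{t^\prime+u, t^{\prime \prime}+u}(c^\prime(t^\prime+u), c^{\prime \prime}(t^{\prime \prime}+u)) \: = \: h(t^{\prime \prime}+u) \: - \: h(t^\prime+u),
\end{equation*}
where
\begin{equation*}
h(t) \: = \: \int_M \left( - \: \frac12 \, |\nabla \phi(t)|^2 \: + \: \frac12 \, R \: - \: \nabla^2 \phi(t) \right) \, d\bar c(t).
\end{equation*}

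A direct manipulation using (\ref{4.18}) and (\ref{4.20}) identifies $h(t)$ as the $t$-derivative of $\mathcal{E}(\bar c(t)) - \int_M \phi(t) \, d\bar c(t)$ along the displacement interpolation. By Proposition \ref{prop11}, this functional is convex in $t \in [t^\prime+u, t^{\prime \prime}+u]$, so $h$ is nondecreasing on this interval, giving $h(t^{\prime \prime}+u) \geq h(t^\prime+u)$ and hence the claimed monotonicity.

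The main obstacle is regularity: the displacement interpolation $\bar c$ need not lie in $P^\infty(M)$ and $\phi$ is only semiconvex, so the envelope argument, the pointwise differentiation in $u$, and the identification of $h$ as a time-derivative must all be justified in a weaker sense. I would handle this by working throughout in the Lagrangian framework, representing $\bar c(t) \: = \: (F_{t^\prime+u, t})_* \, c^\prime(t^\prime+u)$ via the $\mathcal{L}_0$-exponential transport map (\ref{4.17}) and reducing the two calculations to statements about ordinary derivatives along smooth families of $\mathcal{L}_0$-geodesics as in the proof of Proposition \ref{prop11}; alternatively, one can invoke the regularity theory for time-dependent cost functions of Bernard-Buffoni and Villani referenced in the introduction.
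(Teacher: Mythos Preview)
Your proposal is correct and follows essentially the same route as the paper: the paper's proof of Proposition~\ref{prop13} consists precisely of the sentence ``Its proof is an analog of Topping's proof of the corresponding statement for the ${\mathcal L}$-cost,'' and you have supplied exactly that adaptation---Kantorovich duality, an envelope computation at the endpoints, and the identification of the resulting boundary expression as the derivative of the convex functional from Proposition~\ref{prop11}. Your explicit acknowledgment that the envelope step and the endpoint differentiations require the Lagrangian framework (or the Bernard--Buffoni/Villani regularity theory) to be made rigorous is appropriate and matches the spirit of the paper's treatment.
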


Using Proposition \ref{prop13}, we now reprove the fact that Perelman's
${\mathcal F}$-functional is monotonic
\cite{Perelman}.  The proof is along the
lines of Topping's proof \cite{Topping} of the corresponding result for
Perelman's ${\mathcal W}$-functional.

\begin{corollary} \label{cor6}
Suppose that $\alpha \: : \: [t_0, t_1] \rightarrow P^\infty(M)$ is a solution
of (\ref{3.13}).
Write $\alpha(t) \: = \: \rho(t) \: \dvol_M$. Then
\begin{equation} \label{5.9}
{\mathcal F} \: = \: \int_M \left( |\nabla \log(\rho)|^2 \: + \: R \right)
\: \rho \: \dvol_M
\end{equation}
is nondecreasing in $t$.
\end{corollary}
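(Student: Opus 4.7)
The plan is to follow Topping's strategy~\cite{Topping} and extract Perelman's monotonicity of $\mathcal{F}$ from the optimal-transport monotonicity of Proposition \ref{prop13}. The key move is to apply that proposition in the diagonal case $c^\prime = c^{\prime\prime} = \alpha$, with a short fixed temporal gap $\epsilon > 0$. Concretely, fix a base time $t_*$ and set
\[
\Phi_\epsilon(u) \: := \: C_0^{t_*+u,\, t_*+u+\epsilon}\bigl(\alpha(t_*+u),\, \alpha(t_*+u+\epsilon)\bigr).
\]
Proposition \ref{prop13} immediately gives that $u \mapsto \Phi_\epsilon(u)$ is nondecreasing. The corollary will then follow once one identifies the short-time asymptotic
\[
\lim_{\epsilon \to 0^+} \frac{2}{\epsilon}\, \Phi_\epsilon(u) \: = \: \mathcal{F}(\alpha(t_*+u)),
\]
since sending $\epsilon \to 0^+$ in $\Phi_\epsilon(u_1) \leq \Phi_\epsilon(u_2)$ then yields $\mathcal{F}(\alpha(t_*+u_1)) \leq \mathcal{F}(\alpha(t_*+u_2))$.

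The upper-bound half of the asymptotic is straightforward. The backward heat equation (\ref{3.14}) can be rewritten as the continuity equation (\ref{4.7}) with potential $\phi(s,\cdot) = \log\rho(s,\cdot)$, since $\nabla^i(\rho\, \nabla_i \log\rho) = \nabla^2 \rho$. Thus the restriction $\alpha|_{[t_*+u,\, t_*+u+\epsilon]}$ is an admissible competitor in the infimum defining $\Phi_\epsilon(u)$; its $E_0$-action, read off from (\ref{4.9}) and (\ref{5.9}), equals $\tfrac{1}{2}\int_{t_*+u}^{t_*+u+\epsilon}\mathcal{F}(\alpha(s))\, ds$. Consequently
\[
\Phi_\epsilon(u) \: \leq \: \tfrac{1}{2} \int_{t_*+u}^{t_*+u+\epsilon} \mathcal{F}(\alpha(s))\, ds,
\]
so $\limsup_{\epsilon \to 0^+}\tfrac{2}{\epsilon}\Phi_\epsilon(u) \leq \mathcal{F}(\alpha(t_*+u))$ by continuity of $\mathcal{F}(\alpha(\cdot))$.

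The matching lower bound is the main obstacle. I would attack it via Kantorovich duality for the cost $L_0$, which represents $C_0^{t^\prime, t^{\prime\prime}}(\mu^\prime, \mu^{\prime\prime})$ as the supremum of $\int \psi(t^{\prime\prime},\cdot)\, d\mu^{\prime\prime} - \int \psi(t^\prime,\cdot)\, d\mu^\prime$ over functions $\psi$ which are viscosity subsolutions of the Hamilton--Jacobi inequality $\partial_s \psi + \tfrac{1}{2}|\nabla \psi|^2 \leq \tfrac{1}{2} R$. Since $\alpha$ is not itself an $E_0$-minimizer, the naive choice $\psi = \log\rho$ fails this subsolution inequality (and indeed a time-only scalar correction $\log\rho + \beta(s)$ only gives $\epsilon\, \mathcal{F}$ instead of $\tfrac{\epsilon}{2}\mathcal{F}$ since $\sup_M[-\nabla^2 \log\rho - \tfrac12|\nabla \log\rho|^2 + \tfrac12 R] \geq \tfrac{1}{2}\mathcal{F}$ in general). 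The delicate point is to construct a spatially nontrivial perturbation of $\log\rho$ whose dual value, after a short-time Taylor expansion using $\tfrac{d}{ds}\mathcal{E}(\alpha(s)) = \mathcal{F}(\alpha(s))$, matches $\tfrac{\epsilon}{2}\mathcal{F}(\alpha(t_*+u)) + O(\epsilon^2)$. This is the analog for $\mathcal{L}_0$ of the short-time expansion used by Topping in his proof of the monotonicity of Perelman's $\mathcal{W}$-functional.

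Once both bounds are in hand, the short-time limit holds, and the monotonicity of $\Phi_\epsilon$ in $u$ (Proposition \ref{prop13}) passes to the limit to give the desired monotonicity of $\mathcal{F}(\alpha(\cdot))$.
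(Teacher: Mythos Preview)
Your plan is exactly the paper's argument: set $c'=c''=\alpha$ in Proposition~\ref{prop13}, divide by the time gap, and let it shrink. The paper records the resulting limit as equation (\ref{5.11}), citing only the definition (\ref{4.4}), so the lower bound you flag as the ``main obstacle'' is precisely the step the paper leaves implicit.

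You are right that this half of (\ref{5.11}) deserves justification, but the route you sketch---searching for a spatially nontrivial perturbation of $\log\rho$ that is a Hamilton--Jacobi subsolution---is more elaborate than necessary, and you leave it unfinished. In fact no perturbation is needed: take $\psi(t') = \log\rho(t')$ and evolve it forward by the Hopf--Lax formula (\ref{4.15}). This $\psi$ is the viscosity solution of (\ref{4.14}) with smooth initial data, hence smooth for $s$ close to $t'$, and the pair $\bigl(\psi(t'),\psi(t'')\bigr)$ is automatically admissible in the Kantorovich dual for $C_0^{t',t''}$. Differentiating $s\mapsto\int_M\psi(s)\,d\alpha(s)$ at $s=t'$, using $\partial_s\psi = -\tfrac12|\nabla\psi|^2 + \tfrac12 R$ together with $\nabla\psi(t')=\nabla\log\rho(t')$ and the transport equation (\ref{4.7}) for $\alpha$, gives
\[
\frac{d}{ds}\Big|_{s=t'}\int_M\psi(s)\,d\alpha(s)
\;=\;\int_M\Bigl(-\tfrac12|\nabla\log\rho|^2+\tfrac12 R+|\nabla\log\rho|^2\Bigr)\rho\,\dvol_M
\;=\;\tfrac12\,\mathcal{F}(\alpha(t')).
\]
Hence the dual value is $\tfrac{t''-t'}{2}\,\mathcal{F}(\alpha(t'))+o(t''-t')$, which is exactly the missing lower bound $\liminf_{\epsilon\to0}\tfrac{2}{\epsilon}\Phi_\epsilon(u)\ge\mathcal{F}(\alpha(t_*+u))$. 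The ``delicate construction'' you anticipated is unnecessary because the Hopf--Lax flow of the unperturbed $\log\rho(t')$ is already the optimal dual potential with that initial data; your trial with a time-only shift $\log\rho+\beta(s)$ failed only because it does not solve the Hamilton--Jacobi equation.
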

\begin{proof}
Put $c^\prime \: = \: c^{\prime \prime} \: = \: \alpha$.
Take $t^{\prime \prime} > t^\prime$.
By Corollary \ref{cor5},
if $u > 0$ then
$C_0^{t^\prime+u, t^{\prime \prime}+u}(\alpha(t^\prime + u),
\alpha(t^{\prime \prime} + u)) \: \ge \: 
C_0^{t^\prime, t^{\prime \prime}}(\alpha(t^\prime),
\alpha(t^{\prime \prime}))$, so
\begin{equation} \label{5.10}
\frac{C_0^{t^\prime+u, t^{\prime \prime}+u}(\alpha(t^\prime + u),
\alpha(t^{\prime \prime} + u))}{t^{\prime \prime} - t^\prime} \: \ge \: 
\frac{
C_0^{t^\prime, t^{\prime \prime}}(\alpha(t^\prime),
\alpha(t^{\prime \prime}))}{t^{\prime \prime} - t^\prime}.
\end{equation}
From (\ref{4.4}),
\begin{equation} \label{5.11}
\lim_{t^{\prime \prime} \rightarrow t^\prime} 
\frac{1}{t^{\prime \prime} - t^\prime} \:
C_0^{t^\prime, t^{\prime \prime}}(\alpha(t^\prime),
\alpha(t^{\prime \prime})) \: = \: \frac12
\int_M \left( |\nabla \phi|^2 \: + \: R \right) \: \rho \: \dvol_M,
\end{equation}
where $\phi$ satisfies (\ref{4.7}) and the right-hand side is
evaluated at time $t^\prime$.
As $\rho$ satisfies (\ref{3.14}), we can take $\phi \: = \: \log(\rho)$.
The corollary follows.
\end{proof}

\section{Convexity of the ${\mathcal L}_-$-entropy}
\label{section6}

In this section we extend the results of Section \ref{section4} from the
${\mathcal L}_0$-functional to the ${\mathcal L}_-$-functional.
Optimal transport with an ${\mathcal L}_-$-cost was considered
in \cite{Topping}.
As the results of this section 
are analogs of those in Section \ref{section4}, we only
indicate the needed changes.

Let $M$ be a connected closed manifold and let $g(\cdot)$ be a Ricci flow
solution on $M$. We put $\tau = t_0 - t$ and write the Ricci 
flow equation in terms of $\tau$, i.e.
\begin{equation} \label{6.1}
\frac{dg}{d\tau} \: = \: 2 \Ric(g(\tau)).
\end{equation}

\begin{definition}
If $\gamma \: : \: [\tau^\prime, \tau^{\prime \prime}] \rightarrow M$ is a 
smooth curve with $\tau^\prime > 0$ then
its ${\mathcal L}_-$-length is
\begin{equation} \label{6.2}
{\mathcal L}_-(\gamma) \: = \: \frac12 \int_{\tau^\prime}^{\tau^{\prime \prime}}
\sqrt{\tau} \:
\left( g \left( \frac{d\gamma}{d\tau}, \frac{d\gamma}{d\tau} \right) \: + \:
R(\gamma(\tau),\tau) \right) \: d\tau, 
\end{equation}
where the time-$\tau$ metric $g(\tau)$ is used to define the integrand.

Let $L_-^{\tau^\prime, \tau^{\prime \prime}}(m^\prime, m^{\prime \prime})$
be the infimum of ${\mathcal L}_-$ over curves $\gamma$ with
$\gamma(\tau^\prime) = m^\prime$ and
$\gamma(\tau^{\prime \prime}) = m^{\prime \prime}$.
\end{definition}

The Euler-Lagrange equation for the ${\mathcal L}_-$-functional
is easily derived \cite[(7.2)]{Perelman} to be
\begin{equation} \label{6.3}
\nabla_{\frac{d\gamma}{d\tau}} \left( \frac{d\gamma}{d\tau} \right) 
\: - \: \frac12 \: \nabla R
\: + \: \frac{1}{2\tau} \: \frac{d\gamma}{d\tau} 
\: + \: 2 \: \Ric \left( \frac{d\gamma}{d\tau},\cdot \right) \: = \: 0.
\end{equation}
The ${\mathcal L}_-$-exponential map 
${\mathcal L}_-\exp_{m^\prime}^{\tau^\prime, \tau^{\prime \prime}} \: : \:
T_{m^\prime} M \rightarrow M$
is defined by saying that
for $V \in T_{m^\prime}M$, one has
\begin{equation} \label{6.4}
{\mathcal L}_-\exp_{m^\prime}^{\tau^\prime, \tau^{\prime \prime}} 
\left(V \right) \: = \: \gamma(\tau^{\prime \prime})
\end{equation}
where $\gamma$ is the solution to (\ref{6.3}) with
$\gamma(\tau^\prime) = m^\prime$ and $\frac{d\gamma}{d\tau} \Big|_{\tau = \tau^\prime}
= V$. Note that our ${\mathcal L}_-$-exponential map differs slightly from
Perelman's ${\mathcal L}$-exponential map.

\begin{definition}
Given $\mu^\prime, \mu^{\prime \prime} \in P(M)$, put
\begin{equation} \label{6.5}
C_-^{\tau^\prime, \tau^{\prime \prime}}
(\mu^\prime, \mu^{\prime \prime}) \: = \: \inf_\Pi
\int_{M \times M} 
L_-^{\tau^\prime, \tau^{\prime \prime}}(m^\prime, m^{\prime \prime}) \:
d\Pi(m^\prime, m^{\prime \prime}),
\end{equation}
where $\Pi$ ranges over the elements of $P(M \times M)$ whose pushforward
to $M$ under projection onto the first (resp. second) factor is
$\mu^\prime$ (resp. $\mu^{\prime \prime}$).
Given a continuous curve 
$c \: : \: [\tau^\prime, \tau^{\prime \prime}] \rightarrow P(M)$, put
\begin{equation} \label{6.6}
{\mathcal A}_-(c) 
\: = \: \sup_{J \in \Z^+} \sup_{\tau^\prime = \tau_0 \le \tau_1 \le \ldots
\le \tau_J = \tau^{\prime \prime}} \sum_{j=1}^J C_-^{\tau_{j-1}, \tau_j}(c(\tau_{j-1}),
c(\tau_j)).
\end{equation}
\end{definition}

We can think of 
${\mathcal A}_-$ as a generalized length functional associated
to the generalized metric $C_-$. 
By \cite[Theorem 7.21]{Villani2}, 
${\mathcal A}_-$ is a coercive action on $P(M)$
in the sense of
\cite[Definition 7.13]{Villani2}. In particular,
\begin{equation} \label{6.7}
C_-^{\tau^\prime, \tau^{\prime \prime}}
(\mu^\prime, \mu^{\prime \prime}) \: = \:
\inf_c {\mathcal A}_-(c),
\end{equation}
where $c$ ranges over continuous curves 
$c \: : \: [\tau^\prime, \tau^{\prime \prime}] \rightarrow
P(M)$ with $c(\tau^\prime) = \mu^\prime$ and
$c(\tau^{\prime \prime}) = \mu^{\prime \prime}$.

If $c \: : [\tau_0, \tau_1] 
\rightarrow P^\infty(M)$ is a smooth curve in $P^\infty(M)$, with
$\tau_0 > 0$, then 
we write $c(\tau) \: = \: \rho(\tau) \: \dvol_M$ and let $\phi(\tau)$ satisfy
\begin{equation} \label{6.8}
\frac{\partial \rho}{d\tau} \: = \: - \: 
\nabla^i \left( \rho \nabla_i \phi \right) \: - \: R \rho.
\end{equation}
Note that $\phi(\tau)$ is uniquely defined up to an additive constant.
The scalar curvature term in (\ref{6.8})  ensures that
\begin{equation} \label{6.9}
\frac{d}{d\tau} \int_M \rho \: \dvol_M \: = \: 0.
\end{equation}

Consider the Lagrangian 
\begin{equation} \label{6.10}
E_-(c) \: = \: 
\int_{\tau_0}^{\tau_1} \int_M \sqrt{\tau} \left( |\nabla \phi|^2
\: + \: R \right) \: \rho \: \dvol_M \: d\tau,
\end{equation}
where the integrand at time $\tau$ is computed using $g(\tau)$.

\begin{proposition} \label{prop14}
Let 
\begin{equation} \label{6.11}
\rho \: \dvol_M \: : \: [\tau_0, \tau_1] 
\times [- \epsilon, \epsilon]
\rightarrow P^\infty(M)
\end{equation}
be a smooth map, with $\rho \equiv \rho(\tau,u)$.
Let 
\begin{equation} \label{6.12}
\phi \: : \: [\tau_0, \tau_1] \times [- \epsilon, \epsilon]
\rightarrow C^\infty(M)
\end{equation}
be a smooth map that satisfies (\ref{6.8}), with
$\phi \equiv \phi(\tau,u)$.
Then 
\begin{align} \label{6.13}
\frac{dE_-}{du} \Bigg|_{u = 0} \: = \:
& 2 \sqrt{\tau} \int_M \phi \: \frac{\partial \rho}{\partial u}
\: \dvol_M \Bigg|_{\tau=\tau_0}^{\tau_1} \: - \\
& 2 \: 
\int_{\tau_0}^{\tau_1}
\int_M 
\sqrt{\tau} \: \left( \frac{\partial \phi}{\partial \tau} \: + \: \frac12 \:
|\nabla \phi|^2  \: - \: \frac12 \: 
R \: + \: \frac{1}{2\tau} \: \phi \right) \: \frac{\partial \rho}{\partial u}
\: \dvol_M \: d\tau, \notag
\end{align}
where the right-hand side is evaluated at $u = 0$.
\end{proposition}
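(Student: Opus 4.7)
The proof should follow the same template as Proposition \ref{prop1} and Proposition \ref{prop9}, with two new ingredients: the weight $\sqrt{\tau}$ in the integrand and the $-R\rho$ term in the continuity equation (\ref{6.8}). Since the background metric $g(\tau)$ depends only on $\tau$ and not on the parameter $u$, the $u$-derivative only acts on $\rho$ and $\phi$, so
\begin{equation*}
\frac{dE_-}{du} \Bigg|_{u=0} \: = \: \int_{\tau_0}^{\tau_1} \int_M \sqrt{\tau} \left( 2 \langle \nabla \phi, \nabla \tfrac{\partial \phi}{\partial u} \rangle \: \rho \: + \: \left( |\nabla \phi|^2 + R \right) \tfrac{\partial \rho}{\partial u} \right) \dvol_M \: d\tau.
\end{equation*}

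The first step is to eliminate the $\nabla \frac{\partial \phi}{\partial u}$ term. Applying (\ref{6.8}) to a test function $f$ gives
\begin{equation*}
\int_M f \: \frac{\partial \rho}{\partial \tau} \: \dvol_M \: = \: \int_M \langle \nabla f, \nabla \phi \rangle \: \rho \: \dvol_M \: - \: \int_M f R \rho \: \dvol_M.
\end{equation*}
Differentiating with respect to $u$ and then setting $f = \phi$ yields an identity of the form
\begin{equation*}
\int_M \phi \: \frac{\partial^2 \rho}{\partial \tau \partial u} \: \dvol_M \: = \: \int_M \left( \langle \nabla \phi, \nabla \tfrac{\partial \phi}{\partial u} \rangle \rho \: + \: |\nabla \phi|^2 \: \tfrac{\partial \rho}{\partial u} \: - \: R \phi \: \tfrac{\partial \rho}{\partial u} \right) \dvol_M,
\end{equation*}
completely parallel to (\ref{3.24}) but with $-R\phi \frac{\partial \rho}{\partial u}$ replacing the $+R\phi\frac{\partial \rho}{\partial t}$ term, reflecting the sign flip $\tau = t_0 - t$.

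The second step is integration by parts in $\tau$. Substituting the previous identity and using $\frac{\partial(\dvol_M)}{\partial \tau} = R \: \dvol_M$, one computes
\begin{equation*}
\frac{d}{d\tau} \left( 2 \sqrt{\tau} \int_M \phi \: \tfrac{\partial \rho}{\partial u} \dvol_M \right) \: = \: \frac{1}{\sqrt{\tau}} \int_M \phi \: \tfrac{\partial \rho}{\partial u} \dvol_M \: + \: 2 \sqrt{\tau} \int_M \left( \tfrac{\partial \phi}{\partial \tau} \tfrac{\partial \rho}{\partial u} \: + \: \phi \: \tfrac{\partial^2 \rho}{\partial \tau \partial u} \: + \: R \phi \: \tfrac{\partial \rho}{\partial u} \right) \dvol_M.
\end{equation*}
Integrating this identity over $[\tau_0, \tau_1]$ produces the boundary term $2\sqrt{\tau} \int_M \phi \frac{\partial \rho}{\partial u} \dvol_M \big|_{\tau_0}^{\tau_1}$ together with a bulk term that, when combined with the expression derived in the first step, cancels the $\pm R\phi \frac{\partial \rho}{\partial u}$ contributions and leaves behind the coefficient $\frac{1}{2\tau} \phi$ (from the $\tau$-derivative of $\sqrt{\tau}$). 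Collecting terms yields exactly (\ref{6.13}).

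The only genuinely new feature compared to Proposition \ref{prop9} is bookkeeping: one must track the $\sqrt{\tau}$ weight through the integration by parts, keep the volume-evolution factor $R \: \dvol_M$ consistent with the sign convention of (\ref{6.8}), and verify that the $R\phi$ contributions from these two sources cancel cleanly against the $-\frac12 R$ appearing in the final integrand. No nontrivial identity beyond what is used for Proposition \ref{prop9} is needed, so I expect the proof to be routine once the cancellations are organized carefully.
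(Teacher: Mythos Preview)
Your proposal is correct and follows exactly the approach the paper intends: the paper's own proof simply reads ``The proof is similar to that of Proposition \ref{prop9}. We omit the details,'' and Proposition \ref{prop9} in turn defers to Proposition \ref{prop1}. Your outline is precisely this template, with the two bookkeeping adjustments (the $\sqrt{\tau}$ weight producing the $\frac{1}{2\tau}\phi$ term via integration by parts, and the $-R\rho$ in (\ref{6.8}) together with $\partial_\tau \dvol_M = R\,\dvol_M$ producing the cancelling $\pm R\phi\,\partial_u\rho$ contributions) correctly identified and handled.
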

\begin{proof}
The proof is similar to that of Proposition \ref{prop9}. We omit the details.
\end{proof}

From (\ref{6.13}), the Euler-Lagrange equation for $E_-$ is 
\begin{equation} \label{6.14}
\frac{\partial \phi}{\partial \tau} \: = \: - \: \frac12 \: |\nabla \phi|^2
\: + \: \frac12 \: R \: - \: \frac{1}{2\tau} \: \phi \: + \: \alpha(\tau),
\end{equation}
where $\alpha \in C^\infty([\tau_0, \tau_1])$. Changing $\phi$ by a
spatially-constant function, we can assume that $\alpha = 0$, so
\begin{equation} \label{6.15}
\frac{\partial \phi}{\partial \tau} \: = \: - \: \frac12 \: |\nabla \phi|^2
\: + \: \frac12 \: R \: - \: \frac{1}{2\tau} \: \phi.
\end{equation}
If a smooth curve in $P^\infty(M)$ minimizes 
$E_-$, relative to its endpoints, then it will satisfy (\ref{6.15}).
For each $\tau_0  \le \tau^\prime < \tau^{\prime \prime} \le \tau_1$,
the viscosity solution of 
(\ref{6.15}) satisfies
\begin{equation} \label{6.16}
2 \sqrt{\tau^{\prime \prime}} \:
\phi(\tau^{\prime \prime})(m^{\prime \prime}) \: = \: 
\inf_{m^\prime \in M} \left( 
2 \sqrt{\tau^\prime} \: \phi(\tau^\prime)(m^\prime) \: + \:
L_-^{\tau^\prime, \tau^{\prime \prime}}(m^\prime, m^{\prime \prime})
\right). 
\end{equation}
Then the solution of (\ref{6.8}) satisfies 
\begin{equation} \label{6.17}
\rho(\tau^{\prime \prime}) \: \dvol_M \: = \: 
(F_{\tau^\prime, \tau^{\prime \prime}})_* (\rho(\tau^\prime) \: \dvol_M), 
\end{equation}
where the transport map
$F_{\tau^\prime, \tau^{\prime \prime}} \: : \: M \rightarrow M$ is given by
\begin{equation} \label{6.18}
F_{\tau^\prime, \tau^{\prime \prime}}(m^\prime) \: = \: 
{\mathcal L}_-\exp_{m^\prime}^{\tau^\prime, \tau^{\prime \prime}} 
\left( \nabla_{m^\prime}
\phi(\tau^\prime) \right).
\end{equation}
Our function $\phi$ is related to the function $\varphi$ of
\cite{Topping} by $\phi \: = \: - \: \frac{\varphi}{2 \sqrt{\tau}}$.

\begin{proposition} \label{prop15}
Suppose that (\ref{6.8}) and (\ref{6.15}) are satisfied.
Then
\begin{equation} \label{6.19}
\frac{d}{d\tau} \int_M \phi \rho \: \dvol_M \: = \:
\frac12 \int_M \left( |\nabla \phi|^2 \: + \: R \right) \: \rho \: \dvol_M
\: - \: \frac{1}{2\tau} \int_M \phi \rho \: \dvol_M,
\end{equation}

\begin{align} \label{6.20}
\frac12 \: \frac{d}{d\tau} \int_M |\nabla \phi|^2 \: 
\rho \: \dvol_M \: = \: & \int_M \left(- \: \Ric(\nabla \phi, \nabla \phi)
\: + \: \frac12 \: \langle \nabla R, \nabla \phi \rangle \right) \: \rho \: 
\dvol_M \: - \\
& \frac{1}{2\tau} \: \int_M |\nabla \phi|^2 \: 
\rho \: \dvol_M, \notag
\end{align}

\begin{equation} \label{6.21}
\frac{d}{d\tau} \int_M \rho \: \log(\rho) \: 
\dvol_M \: = \: \int_M \left( \langle \nabla \rho, \nabla \phi \rangle \: - \: 
R \: \rho \right) 
\dvol_M,
\end{equation}

\begin{equation} \label{6.22}
\frac{d}{d\tau} \int_M R \rho \: 
\dvol_M \: = \: \int_M \left( R_\tau \: + \: 
\langle \nabla R, \nabla \phi \rangle \right) \: \rho \: 
\dvol_M
\end{equation}

and
\begin{align} \label{6.23}
& \frac{d}{d\tau} \int_M \langle \nabla \rho, \nabla \phi \rangle \: 
\dvol_M \: = \\
& \int_M \left( |\Hess \phi|^2 \: + \: \Ric(\nabla \phi, \nabla \phi)
\: + \: 2 \: \langle \Ric, \Hess \phi \rangle \: - \: \frac12 
\: \nabla^2 R \right) \rho \: \dvol_M \: - \notag \\
& \frac{1}{2\tau} \: \int_M \langle \nabla \rho, \nabla \phi \rangle \: 
\dvol_M. \notag
\end{align}
\end{proposition}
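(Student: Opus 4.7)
The plan is to derive each of the five identities by direct differentiation, using the evolution equations (\ref{6.1}), (\ref{6.8}) and (\ref{6.15}) together with the volume-form evolution $\frac{d(\dvol_M)}{d\tau} = R \: \dvol_M$ (which follows from (\ref{6.1})). The structure of the argument closely parallels the proof of Proposition \ref{prop10}, but with three key sign/normalization changes to track carefully: the Ricci flow has the opposite sign in $\tau$, the scalar term in (\ref{6.8}) is $- R \rho$ instead of $+ R \rho$, and (\ref{6.15}) carries an extra $- \frac{1}{2\tau} \phi$ term relative to (\ref{4.14}).

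For (\ref{6.19}), I would expand $\frac{d}{d\tau}\int_M \phi \rho \: \dvol_M$ using the product rule on $\phi$, $\rho$, and $\dvol_M$, substitute (\ref{6.8}) and (\ref{6.15}), integrate by parts the $\nabla^i(\rho \nabla_i \phi)$ term against $\phi$, and watch the $R\rho \phi$ terms cancel; the leftover $- \frac{1}{2\tau} \phi$ contribution accounts for the new $- \frac{1}{2\tau}\int \phi \rho$. The computation of (\ref{6.20}) is analogous to (\ref{4.19}), with two additional contributions: a sign flip $-\Ric(\nabla\phi,\nabla\phi)$ coming from $\frac{d g^{-1}}{d\tau} = -2\Ric$ (instead of $+ 2\Ric$ as in Section \ref{section4}), and a new $-\frac{1}{2\tau}|\nabla\phi|^2$ term produced by $\nabla \phi \cdot \nabla(-\frac{1}{2\tau}\phi)$ after integrating by parts. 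Similarly, (\ref{6.21}) and (\ref{6.22}) proceed exactly as their counterparts (\ref{4.20}) and (\ref{4.21}), with the sign of the $R\rho$ term in (\ref{6.8}) causing the $+ R \rho$ in (\ref{4.20}) to become $-R\rho$ in (\ref{6.21}), while the $R$-derivative identity in (\ref{6.22}) is formally identical since $\langle \nabla R, \nabla \phi\rangle$ has no $\tau$-sign dependence.

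The formula (\ref{6.23}) is the most intricate one. I would differentiate $\int_M \langle \nabla \rho, \nabla \phi\rangle \: \dvol_M$, picking up a $+ 2 \Ric(\nabla \rho, \nabla \phi)$ term from $\frac{dg^{-1}}{d\tau}$ (with sign opposite to (\ref{4.27})), substitute the evolution equations for $\rho$ and $\phi$, and integrate by parts to reveal the Bochner-type combination. The key algebraic step will be the identity
\begin{equation*}
\int_M \left( -\langle \nabla \phi, \nabla(\nabla^2 \phi)\rangle + \tfrac{1}{2} \nabla^2 |\nabla \phi|^2\right) \rho \: \dvol_M = \int_M \left(|\Hess \phi|^2 + \Ric(\nabla\phi,\nabla\phi)\right) \rho \: \dvol_M,
\end{equation*}
which comes from the Bochner formula (this is used exactly as in (\ref{4.29})). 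The sign reversal on the Ricci flow gives $+2\langle \Ric, \Hess \phi\rangle$ in place of $-2\langle \Ric, \Hess \phi\rangle$, and the new $-\frac{1}{2\tau}\phi$ term in (\ref{6.15}) contributes $-\frac{1}{2\tau}\int \langle \nabla \rho, \nabla \phi\rangle \: \dvol_M$ after one integration by parts (using $\langle \nabla \rho, \nabla \phi\rangle = -\rho \nabla^2 \phi + \nabla^i(\rho \nabla_i \phi)$ schemes as needed).

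The main obstacle I expect is bookkeeping: each identity has multiple cancellations between terms arising from (a) the $\tau$-derivative of the metric, (b) the $R\rho$ term in (\ref{6.8}), (c) the $R$-term in (\ref{6.15}), (d) the new $\frac{1}{2\tau}\phi$ term, and (e) the $R\dvol_M$ volume-form variation. The $R$-contributions from (b), (c), and (e) will systematically cancel in (\ref{6.19}) and (\ref{6.20}), while the $\frac{1}{2\tau}\phi$ term will survive cleanly as an extra trace in (\ref{6.19}), (\ref{6.20}) and (\ref{6.23}). No new ideas beyond those already used in Proposition \ref{prop10} are required; the proof is a careful exercise in integration by parts and the Bochner identity.
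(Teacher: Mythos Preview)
Your proposal is correct and follows exactly the approach the paper intends: the paper's proof simply reads ``The proof is similar to that of Proposition~\ref{prop10}. We omit the details,'' and your outline supplies precisely those details, correctly tracking the sign changes from $\frac{dg}{d\tau}=2\Ric$, the $-R\rho$ term in (\ref{6.8}), and the extra $-\frac{1}{2\tau}\phi$ in (\ref{6.15}). One small slip: in your discussion of (\ref{6.23}) the metric-derivative contribution should be $-2\Ric(\nabla\rho,\nabla\phi)$ (not $+2$), consistent with your own parenthetical ``sign opposite to (\ref{4.27})'' and with your correct final conclusion of $+2\langle\Ric,\Hess\phi\rangle$.
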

\begin{proof}
The proof is similar to that of Proposition \ref{prop10}. We omit the details.
\end{proof}

\begin{corollary} \label{cor7}
Under the hypotheses of Proposition \ref{prop15},
\begin{equation} \label{6.24}
\left( \tau^{\frac12} \: \frac{d}{d\tau} \right)^2 
\int_M \rho \: \log(\rho) \: \dvol_M \: = \:
\tau \int_M \left( |\Ric \: + \: \Hess \phi|^2 \: + \:
\frac12 \: H(\nabla \phi) \right) \:
\rho \: \dvol_M,
\end{equation}
where
\begin{equation} \label{6.25}
H(X) \: = \: - \: R_\tau \: - \: 2 \langle \nabla R, X \rangle \: + \:
2 \: \Ric(X,X) \: - \: \frac{R}{\tau}
\end{equation}
is Hamilton's trace Harnack expression.
Also,
\begin{align} \label{6.26}
& \left( \tau^{\frac32} \: \frac{d}{d\tau} \right)^2 
\left(  \int_M (\rho \: \log(\rho) \: + \: 
\phi \: \rho ) \: \dvol_M \: + \: \frac{n}{2} \: \log(\tau) \right) \: = 
\\
&
\tau^3 \: \int_M \left| \Ric \: + \: \Hess \phi \: - \: \frac{g}{2\tau}
\right|^2 \:
\rho \: \dvol_M. \notag
\end{align}
In particular, $\int_M \left( \rho \: \log(\rho) \: + \: 
\phi \: \rho \right) \: \dvol_M \: + \: 
\frac{n}{2} \: \log(\tau)$ is convex in $\tau^{- \: \frac12}$.
\end{corollary}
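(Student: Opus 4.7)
The plan is a direct bookkeeping computation: differentiate the entropy functionals twice in $\tau$ using Proposition \ref{prop15}, substitute the scalar-curvature evolution $R_\tau = - \nabla^2 R - 2 |\Ric|^2$ (equivalent to (\ref{4.34}) in the backward-time variable), and complete squares. Once (\ref{6.26}) is in hand, convexity in $s = \tau^{-\frac12}$ is immediate: the substitution gives $\tau^{3/2}\, d/d\tau = - \tfrac{1}{2}\, d/ds$, so $(\tau^{3/2}\, d/d\tau)^2 = \tfrac{1}{4}\, d^2/ds^2$, and the right-hand side of (\ref{6.26}) is a manifestly nonnegative pointwise square integrated against the probability measure $\rho\, \dvol_M$.

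For (\ref{6.24}), I would write $F(\tau) = \int_M \rho \log \rho \, \dvol_M$ and note that $(\tau^{1/2}\, d/d\tau)^2 F = \tau F'' + \tfrac{1}{2} F'$. Take $F'$ from (\ref{6.21}) and differentiate its two terms via (\ref{6.22}) and (\ref{6.23}) to obtain $F''$. Two cancellations do the work. First, the $\tfrac{1}{2\tau} \int_M \langle \nabla \rho, \nabla \phi\rangle\, \dvol_M$ contribution coming from the last line of (\ref{6.23}) is killed by the corresponding piece of $\tfrac{1}{2} F'$ after multiplication by $\tau$. Second, the Hessian-type terms assemble via the pointwise identity $|\Hess \phi|^2 + 2 \langle \Ric, \Hess \phi\rangle + |\Ric|^2 = |\Ric + \Hess \phi|^2$, and the extra $|\Ric|^2$ is absorbed using $R_\tau = - \nabla^2 R - 2 |\Ric|^2$. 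Any residual $\int_M \nabla^2 R \cdot \rho\, \dvol_M$ is rewritten by integration by parts and (\ref{6.8}) in terms of $\langle \nabla R, \nabla \phi\rangle$, and what remains matches $\tfrac{1}{2} H(\nabla \phi)$ with $H$ as in (\ref{6.25}).

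Identity (\ref{6.26}) follows by the same mechanism applied to $\Phi(\tau) := \int_M (\rho \log \rho + \phi \rho)\, \dvol_M + \tfrac{n}{2} \log \tau$. Expand $(\tau^{3/2}\, d/d\tau)^2 \Phi = \tau^3 \Phi'' + \tfrac{3}{2} \tau^2 \Phi'$, differentiate $G(\tau) := \int_M \phi \rho\, \dvol_M$ once via (\ref{6.19}) and again using (\ref{6.19}), (\ref{6.20}) and (\ref{6.22}). The $-\tfrac{1}{2\tau}$-weighted contributions from (\ref{6.19}) and (\ref{6.20}), together with the logarithmic derivative $\tfrac{n}{2\tau}$, combine to produce exactly the cross-term $- \tfrac{1}{\tau} \langle \Ric + \Hess \phi, g\rangle = - \tfrac{1}{\tau} (R + \nabla^2 \phi)$ and the scalar $|g/(2\tau)|^2 = n/(4 \tau^2)$ needed to upgrade the square $|\Ric + \Hess \phi|^2$ appearing in (\ref{6.24}) to $|\Ric + \Hess \phi - g/(2\tau)|^2$ in (\ref{6.26}).

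The main obstacle is the bookkeeping rather than any conceptual difficulty: roughly a dozen scalar integrands appear in $\tau^3 \Phi'' + \tfrac{3}{2} \tau^2 \Phi'$, including several $\tfrac{1}{\tau}$-weighted corrections, one integration by parts involving $\nabla^2 R$, and an appeal to the curvature evolution; every contribution outside the $|\cdot|^2$-square must cancel, which happens only because the coefficients $\tfrac{n}{2}$ of $\log \tau$, $\tfrac{3}{2}$ from the differential operator, and the $\tfrac{1}{2\tau}$-factors built into (\ref{6.19})--(\ref{6.20}) all line up. Structurally this is the $\mathcal{L}_-$-analog of the completion-of-square identity underlying the monotonicity of Perelman's $\mathcal{W}$-functional.
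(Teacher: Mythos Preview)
Your proposal is correct and follows essentially the same route as the paper, which simply says ``This follows from Proposition \ref{prop15}, along with the equation $R_\tau = -\nabla^2 R - 2|\Ric|^2$, after some calculations''; you have just spelled out those calculations. One small remark: in the (\ref{6.24}) computation there is in fact no residual $\int_M (\nabla^2 R)\,\rho\,\dvol_M$ requiring integration by parts---after using the curvature evolution to extract the $|\Ric|^2$ needed for the square, the leftover is already $-\tfrac12 R_\tau$, which sits directly in $H(\nabla\phi)$.
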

\begin{proof}
This follows from Proposition \ref{prop15}, along with the equation
\begin{equation} \label{6.27}
R_\tau \: = \: - \: \nabla^2 R \: - \: 2 \: |\Ric|^2,
\end{equation}
after some calculations.
\end{proof}

\begin{remark} \label{addedremark}
In \cite{Topping} it is shown, for transport in $P(M)$ between
two elements of $P^\infty(M)$, that
\begin{equation}
\left( \tau^{\frac12} \: \frac{d}{d\tau} \right)^2 
\int_M \rho \: \log(\rho) \: \dvol_M \: \ge \:
\frac12 \: \tau \int_M \
H(\nabla \phi) \:
\rho \: \dvol_M
\end{equation}
and
\begin{equation}
\left( \tau^{\frac32} \: \frac{d}{d\tau} \right)^2 
\int_M \rho \: \log(\rho) \: \dvol_M \: \ge \:
\frac12 \: \tau^3 \: \int_M H(\nabla \phi) \:
\rho \: \dvol_M \: - \: \frac{n}{4} \: \tau.
\end{equation}
\end{remark}

\section{Monotonicity of the reduced volume}
\label{section7}

In this section we give the extension of Corollary \ref{cor7} to
$P(M)$. We then reprove the monotonicity of
Perelman's reduced volume \cite{Perelman}.

Let $c \: : \: [\tau_0, \tau_1] \rightarrow P(M)$ be a minimizing
curve for ${\mathcal A}_-$ relative to its endpoints. We
assume that $c(\tau_0)$ are $c(\tau_1)$ are 
absolutely continuous with respect to 
a Riemannian volume density on $M$.
Then $c(\tau) \: = \: (F_{\tau_0, \tau})_* c(\tau_0)$, where there is a
semiconvex function $\phi_0 \in C(M)$ so that
$F_{\tau_0, \tau}(m_0) \: = \: {\mathcal L}_- \exp_{m_0}^{\tau_0, \tau}
(\nabla_{m_0} \phi_0)$ \cite{Bernard-Buffoni (2007)},
\cite[Chapters 10,13]{Villani2}.
Define $\phi(\tau) \in C(M)$ by
\begin{equation} \label{7.1}
2 \sqrt{\tau} \: \phi(\tau)(m) \: = \: 
\inf_{m_0 \in M} \left( 2 \sqrt{\tau_o} \: \phi_0(m_0) \: + \:
L_-^{\tau_0, \tau}(m_0, m) \right).
\end{equation}
Define ${\mathcal E} \: : \: P(M) \rightarrow \R \cup \{ \infty \}$ 
as in (\ref{2.38}).
\begin{proposition} \label{prop16}
${\mathcal E}(c(\tau)) \: + \: \int_M \phi(\tau) \: dc(\tau) \: + \: 
\frac{n}{2} \log(\tau)$ is convex in $s = \tau^{- \: \frac12}$.
\end{proposition}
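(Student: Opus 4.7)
The plan is to carry out the Eulerian identity (\ref{6.26}) pointwise along $c(\tau_0)$-a.e.\ trajectory in the displacement interpolation, and then integrate. The Lagrangian framework for time-dependent Tonelli-type costs (\cite{Bernard-Buffoni (2007)}, \cite[Chapters 10 and 13]{Villani2}) supplies the regularity needed to make this rigorous.

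First I would set up a change of variables. Since $\phi_0$ is semiconvex, Alexandrov's theorem gives twice differentiability $c(\tau_0)$-a.e.; at such an $m_0$ the trajectory $\gamma_{m_0}(\tau) = F_{\tau_0, \tau}(m_0)$ is a smooth ${\mathcal L}_-$-geodesic, and $\phi(\tau)$ from (\ref{7.1}) is smooth near $\gamma_{m_0}(\tau)$ and satisfies the Hamilton-Jacobi equation (\ref{6.15}) there. Writing $c(\tau) = \rho(\tau) \: \dvol_{g(\tau)}$ and letting $J(\tau, m_0)$ be the Jacobian of $F_{\tau_0, \tau}$ with respect to $\dvol_{g(\tau_0)}$ and $\dvol_{g(\tau)}$, the Monge identity $\rho(\tau, \gamma_{m_0}(\tau)) \, J(\tau, m_0) = \rho_0(m_0)$ gives
\begin{equation*}
{\mathcal E}(c(\tau)) \: + \: \int_M \phi(\tau) \: dc(\tau) \: + \: \tfrac{n}{2} \log \tau \: = \: \int_M Q(\tau, m_0) \: dc(\tau_0)(m_0) \: + \: {\mathcal E}(c(\tau_0)),
\end{equation*}
where $Q(\tau, m_0) = - \log J(\tau, m_0) + \phi(\tau, \gamma_{m_0}(\tau)) + \tfrac{n}{2} \log \tau$. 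Thus it suffices to show that $Q(\cdot, m_0)$ is convex in $s = \tau^{-1/2}$ for $c(\tau_0)$-a.e.\ $m_0$.

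Next I would carry out the pointwise analog of the computation behind (\ref{6.26}). Linearizing the ${\mathcal L}_-$-geodesic equation (\ref{6.3}) and combining with the Ricci flow equation (\ref{6.1}) yields
\begin{equation*}
\tfrac{d}{d\tau} \log J(\tau, m_0) \: = \: \bigl( \nabla^2 \phi \: + \: R \bigr)\bigl(\gamma_{m_0}(\tau), \tau\bigr),
\end{equation*}
and substituting this together with (\ref{6.15}), the curvature evolution (\ref{6.27}), and the Bochner identity already used in the proof of Proposition \ref{prop15} produces
\begin{equation*}
\bigl( \tau^{3/2} \tfrac{d}{d\tau} \bigr)^{2} Q(\tau, m_0) \: = \: \tau^3 \: \Bigl| \Ric \: + \: \Hess \phi \: - \: \frac{g}{2\tau} \Bigr|^2\!\bigl(\gamma_{m_0}(\tau), \tau\bigr) \: \geq \: 0,
\end{equation*}
which is exactly the desired convexity of $Q$ in $s$. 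Integrating against $c(\tau_0)$ then gives the proposition.

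The main obstacle is that $\phi_0$ is only semiconvex, so the classical Hessian in the last display must be replaced by an Alexandrov Hessian and the Jacobian identity holds only almost everywhere. This is handled by the standard displacement-convexity machinery, adapted to time-dependent costs by the regularity theory cited above: the Hopf-Lax formula (\ref{7.1}) propagates semiconvexity of $\phi(\tau)$, the transport map $F_{\tau_0, \tau}$ is well defined and injective on a set of full $c(\tau_0)$-measure, and the resulting a.e.\ pointwise inequality is enough to conclude, via a standard Fatou/distributional-convexity argument, the convexity of the integrated functional in $s$.
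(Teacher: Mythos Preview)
Your proposal is essentially correct but takes a genuinely different route from the paper.

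The paper does \emph{not} bundle everything into a single pointwise quantity $Q$. Instead it splits the functional into three pieces and treats them separately. For the $\phi$-integral it observes that $\int_M \phi(\tau)\,dc(\tau) = \int_M \phi(\tau,\gamma_{m_0}(\tau))\,dc(\tau_0)$ and then computes $\bigl(\tau^{3/2}\frac{d}{d\tau}\bigr)^2 \phi(\gamma(\tau))$ \emph{exactly}, using only that $\gamma$ is an ${\mathcal L}_-$-geodesic together with Perelman's ODE $\frac{d}{d\tau}(R+|X|^2) = -H(X) - \tau^{-1}(R+|X|^2)$; the answer is $-\tfrac12\tau^3 H(X)$. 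No Hessian of $\phi$, no Jacobian, no Bochner --- just a scalar ODE along each trajectory. For the entropy piece the paper simply quotes Topping's Lagrangian inequality $\bigl(\tau^{3/2}\frac{d}{d\tau}\bigr)^2 {\mathcal E} \ge \tfrac12\tau^3\!\int H\,dc - \tfrac{n}{4}\tau$, and for $\tfrac{n}{2}\log\tau$ the computation is trivial. Adding the three, the Harnack terms $H$ cancel against each other and the $\tfrac{n}{4}\tau$ terms cancel, leaving $\ge 0$.

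Your approach instead runs the full Jacobi--Riccati computation for $-\log J$ together with the other two terms, aiming at the pointwise version of (\ref{6.26}). This is the Cordero-Erausquin--McCann--Schmuckenschl\"ager strategy adapted to the ${\mathcal L}_-$-cost, and it does work; it has the virtue of recovering the sharp identity rather than an inequality. One caution: the middle paragraph of your argument is phrased in Eulerian terms (differentiating $\nabla^2\phi$ along the flow via (\ref{6.15}) and Bochner), which would need $\phi\in C^3$. The rigorous Lagrangian version replaces $\Hess\phi$ by $U=\dot B B^{-1}$ and uses the matrix Riccati equation in place of Bochner; you acknowledge this in your final paragraph, but the computation you actually sketch is the smooth one. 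By contrast, the paper's decomposition sidesteps this entirely for the $\phi$-piece, since that piece needs only first-order information along each geodesic.
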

\begin{proof}
From \cite{Topping}, ${\mathcal E}(c(\tau))$ is
semiconvex in $\tau$ and its second derivative in
the Alexandrov sense satisfies
\begin{equation} \label{7.2}
\left( \tau^{\frac32} \: \frac{d}{d\tau} \right)^2 
\int_M \rho \: \log(\rho) \: \dvol_M \: \ge \:
\frac12 \: \tau^3 \: \int_M H(\nabla \phi(\tau)) \:
c(\tau) \: - \: \frac{n}{4} \: \tau.
\end{equation}
(Strictly speaking, the paper \cite{Topping} assumes that
$c(\tau_0), c(\tau_1) \in P^\infty(M)$, but the proof works
when $c(\tau_0)$ and $c(\tau_1)$ are just absolutely continuous
probability measures.)
Now
\begin{equation} \label{7.3}
\int_M \phi(\tau) \: dc(\tau) \: = \:
\int_M (\phi(\tau) \circ F_{\tau_0,\tau}) \: dc(\tau_0). 
\end{equation}
From \cite[Theorem 7.36]{Villani2}, 
for $c(\tau_0)$-almost all $m_0 \in M$ one has
\begin{equation} \label{7.4}
(\phi(\tau) \circ F_{\tau_0,\tau})(m_0) \: - \: 
(\phi(\tau_0))(m) \: = \: L_-^{\tau_0, \tau}(m_0,
F_{\tau_0,\tau}(m_0)),
\end{equation}
with $F_{\tau_0,\tau}(m_0)$ describing an ${\mathcal L}_-$-geodesic
parametrized by $\tau$. 

Given such an $m_0 \in M$,
put $\gamma(\tau) \: = \: F_{\tau_0,\tau}(m_0)$ and write
$X \: = \: \frac{d\gamma}{d\tau}$. We evaluate
$\left( \tau^{\frac32} \: \frac{d}{d\tau} \right)^2 \: \phi(\gamma(\tau))$
using formulas from \cite[Section 7]{Perelman}; see also
\cite[Section 18]{Kleiner-Lott}. 
Write $X(\tau) \: = \: \frac{d\gamma}{d\tau}$.
Then
\begin{equation} \label{7.5}
\frac{d}{d\tau} \left( 2 \: \sqrt{\tau} \: \phi(\gamma(\tau)) \right) \: = \: 
\frac{d}{d\tau} L_-^{\tau_0, \tau}(m_0, \gamma(\tau)) \: = \:
\sqrt{\tau} \left( R(\gamma(\tau), \tau) \: + \: |X(\tau)|^2 \right), 
\end{equation}
so
\begin{equation} \label{7.6}
\tau^{\frac32} \: \frac{d}{d\tau} \: \phi(\gamma(\tau)) \: = \: 
- \: \frac12 \: \sqrt{\tau} \: \phi(\gamma(\tau)) \: + \: \frac12 \:
\tau^{\frac32} \: \left( R(\gamma(\tau), \tau) \: + \: |X(\tau)|^2 \right).
\end{equation}
From \cite[(7.3)]{Perelman},
\begin{equation} \label{7.7}
\frac{d}{d\tau} \left( R(\gamma(\tau), \tau) \: + \: |X(\tau)|^2 \right)
\: = \: 
- \: H(X) \: - \: \frac{1}{\tau} \: 
\left( R(\gamma(\tau), \tau) \: + \: |X(\tau)|^2 \right).
\end{equation}
Using (\ref{7.6}) and (\ref{7.7}), one obtains
\begin{equation} \label{7.8}
\left( \tau^{\frac32} \: \frac{d}{d\tau} \right)^2 \: \phi(\gamma(\tau))
\: = \: - \: \frac12 \: \tau^3 \: H(X). 
\end{equation}

For $c(\tau_0)$-almost all $m_0 \in M$, we have
\cite[Chapter 13]{Villani2}
\begin{equation} \label{7.8.5}
X(\tau) \: = \:
\left( \nabla \phi(\tau) \right)(\gamma(\tau)).
\end{equation}
Equations (\ref{7.3}) and (\ref{7.8})
give
\begin{equation} \label{7.9}
\left( \tau^{\frac32} \: \frac{d}{d\tau} \right)^2 \int_M \phi(\tau) \: 
dc(\tau) \: = \: \int_M 
\left( H(\nabla \phi(\tau)) \circ F_{\tau_0,\tau} \right) \:
dc_0(\tau) \: = \: \int_M H(\nabla \phi(\tau)) \:
dc(\tau). 
\end{equation} 
As 
\begin{equation} \label{7.10}
\left( \tau^{\frac32} \: \frac{d}{d\tau} \right)^2 \: \log(\tau)
\: = \: \frac12 \: \tau,
\end{equation}
the proposition follows.
\end{proof}

\begin{remark}
We expect that one can prove Proposition \ref{prop16} using the
Eulerian approach and a density argument, along the lines of
\cite{Daneri-Savare (2008)}, but we do not pursue this here.
\end{remark}

We now consider the limiting case when 
$\tau_0 = 0$ and $c(0) \: = \: \delta_p$. We remark that
the preceding results of this section are valid if we just
assume that only $c(\tau_1)$ is absolutely continuous with respect to
a Riemannian volume density \cite[Chapter 13]{Villani2}.
Fix $p \in M$ and, following the notation of
\cite[Section 7]{Perelman}, put
$L(m,\tau) \: = \: L_-^{0,\tau}(p,m)$.
Choose $c(\tau_1) \in P(M)$ to be absolutely continuous
with respect to a Riemannian measure.
For each $m_1 \in M$, choose a (minimizing) ${\mathcal L}_-$-geodesic
$\gamma_{m_1} \: : \: [0, \tau_1] \rightarrow M$ with
$\gamma_{m_1}(0) = p$ and $\gamma_{m_1}(\tau_1) = m_1$. It is uniquely
defined for almost all $m_1 \in M$ \cite[Section 17]{Kleiner-Lott}. 
Let ${\mathcal R}_\tau \: : \: M \rightarrow M$
be the map given by ${\mathcal R}_\tau(m_1) \: = \: \gamma_{m_1}(\tau)$. Then
as $\tau$ ranges in $[0, \tau_1]$,
$c(\tau) \: = \: ({\mathcal R}_\tau)_* c(\tau_1)$ describes a minimizing
curve for ${\mathcal A}_-$ relative to its endpoints. If
$\tau > 0$ then $c(\tau)$ is absolutely continuous with respect to
a Riemannian volume density \cite[Chapter 13]{Villani2}.

From (\ref{7.5}),
\begin{equation} \label{7.11}
\phi(\tau) \: = \:
l(\cdot, \tau) \: = \: \frac{L(\cdot, \tau)}{2 \sqrt{\tau}}.
\end{equation}

\begin{proposition} \label{prop17}
${\mathcal E}(c(\tau)) \: + \: \int_M \phi(\tau) \: dc(\tau) \: + \: 
\frac{n}{2} \log(\tau)$ is nondecreasing in $\tau$.
\end{proposition}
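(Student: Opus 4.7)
The plan is to show directly that $\frac{d}{d\tau} Q(\tau) \ge 0$ for every $\tau > 0$, where $Q(\tau) = {\mathcal E}(c(\tau)) + \int_M \phi(\tau) \, dc(\tau) + \frac{n}{2} \log \tau$, by reducing the calculation to Perelman's fundamental differential inequality for the reduced distance. The crucial input from earlier in the paper is (\ref{7.11}), which identifies $\phi(\tau) = l(\cdot, \tau)$. Thus Proposition \ref{prop17} concerns the specific functional
\[
Q(\tau) \: = \: \int_M \rho \log \rho \: \dvol_M \: + \: \int_M l \: \rho \: \dvol_M \: + \: \frac{n}{2} \log \tau
\]
evaluated along the transport $c(\tau) = \rho(\tau) \dvol_M$.

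For $\tau > 0$ the measure $c(\tau)$ is absolutely continuous, and on the complement of the ${\mathcal L}_-$-cut locus the reduced distance $l(\cdot, \tau)$ is smooth and satisfies the Hamilton-Jacobi equation (\ref{6.15}), while $\rho$ satisfies the continuity equation (\ref{6.8}). I would then apply Proposition \ref{prop15}, in particular (\ref{6.19}) and (\ref{6.21}), together with $\frac{d}{d\tau}(\frac{n}{2}\log\tau) = \frac{n}{2\tau}$. Collecting terms and using integration by parts in $\langle \nabla \rho, \nabla l\rangle = - \rho \Delta l$ yields, after cancellations,
\[
\frac{dQ}{d\tau} \: = \: - \: \frac12 \int_M \left[2 \Delta l \: - \: |\nabla l|^2 \: + \: R \: + \: \frac{l - n}{\tau}\right] \rho \: \dvol_M.
\]
Next I would invoke Perelman's differential inequality \cite[Section 7]{Perelman} for the reduced distance,
\[
2 \Delta l \: - \: |\nabla l|^2 \: + \: R \: + \: \frac{l - n}{\tau} \: \le \: 0,
\]
which holds pointwise where $l$ is smooth, and in the barrier sense globally. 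The integrand is therefore $\le 0$, so $\frac{dQ}{d\tau} \ge 0$, which proves the proposition.

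The main obstacle is the regularity of $l$: it is only locally Lipschitz and locally semiconcave in general, so (\ref{6.15}) fails on the ${\mathcal L}_-$-cut locus and Proposition \ref{prop15} cannot be applied pointwise there. I would bypass this either by (a) interpreting Perelman's inequality in the barrier sense and integrating against $\rho \: \dvol_M$, exploiting that $c(\tau) = ({\mathcal R}_\tau)_* c(\tau_1)$ places no mass on the ${\mathcal L}_-$-cut locus for a.e.\ $\tau$ (see \cite[Section 17]{Kleiner-Lott}); or (b) proceeding indirectly through Proposition \ref{prop16}, which applied on each $[\tau_0', \tau_1]$ with $\tau_0' > 0$ (using the semigroup property of the Hopf-Lax formula (\ref{7.1})) yields convexity of $Q$ in $s = \tau^{-1/2}$ on all of $(0, \infty)$, and then combining this with Perelman's asymptotic analysis of $l$ near the base point, under which $c(\tau)$ and the reduced volume density $(4\pi\tau)^{-n/2} e^{-l} \: \dvol_M$ have comparable Gaussian profiles so that $Q(\tau)$ stays bounded as $\tau \to 0^+$. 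A convex function on $(0, \infty)$ that is bounded above is automatically nonincreasing, which translates back to $Q$ being nondecreasing in $\tau$.
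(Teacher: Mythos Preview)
Your primary argument --- computing $\frac{dQ}{d\tau}$ directly and invoking Perelman's differential inequality $2\Delta l - |\nabla l|^2 + R + \frac{l-n}{\tau} \le 0$ --- is correct (the formal calculation from (\ref{6.19}) and (\ref{6.21}) is right), and it is a genuinely different route from the paper's. The paper does not touch the first derivative of $Q$ at all; instead it relies on the convexity already established in Proposition~\ref{prop16} and then shows, via the ${\mathcal L}$-exponential map pullback to $T_pM$, that $Q(\tau)$ converges to a \emph{finite constant} as $\tau \to 0^+$. A function convex in $s = \tau^{-1/2}$ with a finite limit as $s \to \infty$ is nonincreasing in $s$, hence nondecreasing in $\tau$. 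Your fallback (b) is exactly this argument, stated somewhat less precisely (bounded above is indeed enough; the paper computes the limit explicitly).

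Comparing the two: your approach (a) is essentially Perelman's own pointwise monotonicity argument transported to the measure level, and so inherits both its directness and its regularity burden --- one must justify the distributional inequality for $\Delta l$ across the ${\mathcal L}_-$-cut locus, as you note. The paper's approach trades that analytic issue for a different one: it needs only the convexity machinery of Proposition~\ref{prop16} (already done in the Lagrangian framework) plus a small-$\tau$ asymptotic computation, which is clean because near $\tau = 0$ the geometry is asymptotically Euclidean. The paper's route also preserves the convexity statement as an intermediate result, whereas your route (a) yields only monotonicity.
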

\begin{proof}
Put $s = \tau^{- \: \frac12}$. If we can show that
${\mathcal E}(c(\tau)) \: + \: \int_M \phi(\tau) \: dc(\tau) \: + \: 
\frac{n}{2} \log(\tau)$ approaches a constant as $s \rightarrow \infty$,
i.e. as $\tau \rightarrow 0$, then the convexity in $s$ will imply
that 
${\mathcal E}(c(\tau)) \: + \: \int_M \phi(\tau) \: dc(\tau) \: + \: 
\frac{n}{2} \log(\tau)$ is nonincreasing in $s$, i.e. nondecreasing in $\tau$.

Let ${\mathcal L}exp(\overline{\tau}) \: : \: T_p M \rightarrow M$ be the
${\mathcal L}$-exponential map of \cite[Section 7]{Perelman}.
That is, for $V \in T_pM$, $\left( {\mathcal L}exp(\overline{\tau}) \right) (V)
\: = \: \gamma(\overline{\tau})$ where $\gamma \: : \: 
[0,\overline{\tau}] \rightarrow M$
is the ${\mathcal L}_-$-geodesic with $\gamma(0) = p$ and
$\lim_{\tau \rightarrow 0} \sqrt{\tau} \: \gamma^\prime(\tau) \: = \: V$.

Let $\Omega_{\tau_1}$ be the set of vectors $V \in T_pM$ for which
$\{{\mathcal L}exp(\tau^\prime)(V)\}_{\tau^\prime \in 
[0, \tau_1]}$ is ${\mathcal L}_-$-minimizing
relative to its endpoints. 
Put
$\widehat{c}(\tau_1) \: = \:
\left( 
{\mathcal L}exp({\tau_1})^{-1} \right)_* c(\tau_1)$, a measure
on $\Omega_{\tau_1}$.
Then $c(\tau) \: = \: {\mathcal L}exp({\tau})_* \widehat{c}(\tau_1)$.
Computing 
\begin{equation}
{\mathcal E}(c(\tau)) \: + \: \int_M \phi(\tau) \: dc(\tau) \: + \: 
\frac{n}{2} \log(\tau)
\end{equation}
with respect to the metric $g(\tau)$ on $M$ is the same as computing 
\begin{equation}
{\mathcal E}(\widehat{c}(\tau_1)) \: + \: \int_{\Omega_{\tau_1}} 
\left( \phi(\tau)
\circ {\mathcal L}exp({\tau}) \right) \: 
d\widehat{c}(\tau_1) \: + \: 
\frac{n}{2} \log(\tau)
\end{equation}
with respect to the metric
$\widehat{g}(\tau) \: = \: {\mathcal L}exp({\tau})^* g(\tau)$ on
$\Omega_{\tau_1}$.

As $\tau \rightarrow 0$, one approaches the Euclidean situation;
see \cite[Section 16]{Kleiner-Lott}.
One can check that
$\left( \phi(\tau)
\circ {\mathcal L}exp({\tau}) \right)(V)$ approaches
$|V|^2$ uniformly on the compact set $\overline{\Omega_{\tau_1}}$,
where $|V|^2$ is the norm squared of $V \in T_pM$ with respect to
$g_{T_pM}$.
Thus
\begin{equation}
\lim_{\tau \rightarrow 0} \int_{\Omega_{\tau_1}} 
\left( \phi(\tau)
\circ {\mathcal L}exp({\tau}) \right) \: 
d\widehat{c}(\tau_1) \: = \:
\int_{\Omega_{\tau_1}} 
|V|^2 \: d\widehat{c}(\tau_1).
\end{equation}
Also, $\frac{\widehat{g}(\tau)}{4\tau}$ approaches the
flat Euclidean metric $g_{T_pM}$ on $\overline{\Omega_{\tau_1}}$.
Writing $\widehat{c}(\tau_1) \: = \: \rho_1 \: \dvol(g_{T_pM})$,
for small $\tau$ 
the density of $\widehat{c}(\tau_1)$ relative to
$\dvol(\widehat{g}(\tau))$ is asymptotic to
$(4 \tau)^{- \: \frac{n}{2}} \: \rho_1$. Thus
\begin{align}
& \lim_{\tau \rightarrow 0}
\left( {\mathcal E}(\widehat{c}(\tau_1)) \: + \: 
\frac{n}{2} \log(\tau) \right) \: = \\
& \lim_{\tau \rightarrow 0} \left( \int_{\Omega_{\tau_1}}
(4 \tau)^{- \: \frac{n}{2}} \: \rho_1  \: \cdot \:
\log ((4 \tau)^{- \: \frac{n}{2}} \: \rho_1) \: \cdot
\: (4 \tau)^{\frac{n}{2}} \: \dvol_{T_pM} \: + \: 
\frac{n}{2} \log(\tau) \right) \: = \notag \\
& \int_{\Omega_{\tau_1}}
\rho_1  \: \log (\rho_1) \: \dvol_{T_pM} \: - \: 
\frac{n}{2} \log(4). \notag
\end{align}
The proposition follows.
\end{proof}

\begin{corollary} \label{cor8}
$\tau^{- \: \frac{n}{2}} \: \int_M e^{-l} \: \dvol_M$ is
nonincreasing in $\tau$. 
\end{corollary}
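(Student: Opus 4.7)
The plan is to combine Proposition \ref{prop17} with a sharp Jensen-type bound, exploiting the freedom in Proposition \ref{prop17} to choose the endpoint measure $c(\tau_1)$. Fix $0 < \tau_0 < \tau_1$, and set $Z(\tau) := \int_M e^{-l(\cdot,\tau)} \: \dvol_M$, so that $\widetilde V(\tau) = \tau^{-n/2} Z(\tau)$. Take the equilibrium terminal measure
\begin{equation*}
c(\tau_1) \: = \: \frac{e^{-l(\cdot, \tau_1)}}{Z(\tau_1)} \: \dvol_M,
\end{equation*}
which is absolutely continuous because $l(\cdot, \tau_1)$ is locally Lipschitz and $M$ is compact. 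Transport it backward via $({\mathcal R}_\tau)_* c(\tau_1)$ as in the setup preceding Proposition \ref{prop17}; the resulting curve $c(\tau)$ starts at $\delta_p$ and has $\phi(\tau) = l(\cdot, \tau)$ by (\ref{7.11}). Set
\begin{equation*}
F(\tau) \: := \: {\mathcal E}(c(\tau)) \: + \: \int_M l(\cdot, \tau) \: dc(\tau) \: + \: \frac{n}{2} \: \log \tau.
\end{equation*}
Proposition \ref{prop17} asserts that $F$ is nondecreasing on $(0, \tau_1]$.

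Next I evaluate the two endpoints. At $\tau = \tau_1$, the chosen density $\rho_1 = e^{-l(\cdot,\tau_1)}/Z(\tau_1)$ satisfies $\log \rho_1 + l(\cdot, \tau_1) = -\log Z(\tau_1)$ pointwise, so the $l$-terms cancel and
\begin{equation*}
F(\tau_1) \: = \: - \: \log Z(\tau_1) \: + \: \frac{n}{2} \: \log \tau_1 \: = \: - \: \log \widetilde V(\tau_1).
\end{equation*}
At $\tau = \tau_0$, I use the standard inequality (equivalent to nonnegativity of relative entropy, or Jensen's inequality for $x \mapsto x \log x$ applied to $d\nu = e^{-\phi} \dvol_M$): for any probability density $\rho$ on $M$ and any $\phi \in L^1(\rho \: \dvol_M)$,
\begin{equation*}
\int_M \rho \log \rho \: \dvol_M \: + \: \int_M \phi \rho \: \dvol_M \: \ge \: - \: \log \int_M e^{-\phi} \: \dvol_M,
\end{equation*}
with equality iff $\rho \propto e^{-\phi}$. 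Applying this with $\phi = l(\cdot, \tau_0)$ gives $F(\tau_0) \ge -\log \widetilde V(\tau_0)$.

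Chaining the three pieces,
\begin{equation*}
- \: \log \widetilde V(\tau_0) \: \le \: F(\tau_0) \: \le \: F(\tau_1) \: = \: - \: \log \widetilde V(\tau_1),
\end{equation*}
so $\widetilde V(\tau_0) \ge \widetilde V(\tau_1)$; since $\tau_0 < \tau_1$ were arbitrary, this proves the corollary.

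The heart of the argument is the matching trick: one does not need Jensen to be tight at both endpoints, only at the upper endpoint, and this is achieved by choosing $c(\tau_1)$ to be the equilibrium measure for $\phi(\tau_1) = l(\cdot, \tau_1)$. I anticipate no serious obstacle. The only technical points to verify are that the terminal density is absolutely continuous with finite entropy (clear from compactness of $M$ and local semiconcavity of $l(\cdot, \tau_1)$, which also ensures the ${\mathcal L}_-$-geodesics used by the transport map ${\mathcal R}_\tau$ are unique $c(\tau_1)$-almost everywhere), and that $Z(\tau) > 0$ so the logarithms make sense; both follow from properties of $l$ already invoked in the setup of Proposition \ref{prop17}.
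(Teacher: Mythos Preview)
Your proof is correct and follows essentially the same approach as the paper's own proof: choose the terminal measure to be the equilibrium measure $e^{-l(\cdot,\tau_1)}\,\dvol_M/Z(\tau_1)$ so that the functional equals $-\log\widetilde V(\tau_1)$ there, invoke the monotonicity from Proposition \ref{prop17}, and then bound the functional at the earlier time below by $-\log\widetilde V(\tau_0)$ via the relative-entropy (Jensen) inequality. The paper phrases the last step as ``$\mathcal E(\mu)+\int_M\phi(\tau')\,d\mu+\frac{n}{2}\log(\tau')$ is minimized by the equilibrium measure,'' which is exactly your Jensen bound.
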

\begin{proof}
Given $0 < \tau^\prime < \tau^{\prime \prime} < \tau_1$,
take 
\begin{equation} \label{7.13}
c(\tau^{\prime \prime}) \: = \: \frac{e^{- \: \phi(\tau^{\prime \prime})} \: \dvol_M}{
\int_M e^{- \: \phi(\tau^{\prime \prime}}) \: \dvol_M}.
\end{equation}
Then
\begin{equation} \label{7.14}
{\mathcal E}(c(\tau^{\prime \prime})) \: + \: \int_M 
\phi(\tau^{\prime \prime}) \: dc(\tau^{\prime \prime}) \: + \: 
\frac{n}{2} \log(\tau^{\prime \prime}) \: = \:
- \log \left( (\tau^{\prime \prime})^{- \: \frac{n}{2}} \: \int_M 
e^{-\phi(\tau^{\prime \prime})} \: \dvol_M \right).
\end{equation}
Applying Proposition \ref{prop17}, with $\tau_1$ replaced by
$\tau^{\prime \prime}$, gives
\begin{equation} \label{7.15}
{\mathcal E}(c(\tau^\prime)) \: + \: \int_M \phi(\tau^\prime) \: 
dc(\tau^\prime) \: + \: 
\frac{n}{2} \log(\tau^\prime) \: \le \:
{\mathcal E}(c(\tau^{\prime \prime})) \: + \: \int_M 
\phi(\tau^{\prime \prime}) \: dc(\tau^{\prime \prime}) \: + \: 
\frac{n}{2} \log(\tau^{\prime \prime}).
\end{equation}
However,
${\mathcal E}(\mu) \: + \: \int_M \phi(\tau^\prime) \: 
d\mu \: + \: 
\frac{n}{2} \log(\tau^\prime)$
is minimized by
$- \log \left( (\tau^\prime)^{- \: \frac{n}{2}} \: 
\int_M e^{-\phi(\tau^\prime)} \: \dvol_M \right)$,
as $\mu$ ranges over probability measures that are absolutely
continuous with respect to a Riemannian measure on $M$.
Thus 
\begin{equation} \label{7.16}
- \log \left( (\tau^\prime)^{- \: \frac{n}{2}} \: 
\int_M e^{-\phi(\tau^\prime)} \: \dvol_M \right) \: \le \:
- \log \left( (\tau^{\prime \prime})^{- \: \frac{n}{2}} \: \int_M 
e^{-\phi(\tau^{\prime \prime})} \: \dvol_M \right).
\end{equation}
The corollary follows.
\end{proof}

\begin{remark} \label{rem4}
This procedure of converting a convexity statement to a
monotonicity statement works for the ${\mathcal L}_-$-cost
and the ${\mathcal L}_+$-cost but does not work for the
${\mathcal L}_0$-cost.
\end{remark}

\section{Ricci flow on a smooth metric-measure space}
\label{section8}

In this section we give a definition of Ricci flow on a smooth
metric-measure space.  Our approach is to consider the Ricci flow
on a warped product manifold $\overline{M}$ and compute the induced flow on the
base $M$. This is in analogy to what works in defining
Ricci tensors for smooth metric-measure spaces
\cite{Lott (2003)}.

It turns out that there is a $1$-parameter family of such generalized
Ricci flows, depending on a parameter $N \in [\dim(M), \infty]$.
In the case $N = \infty$, there is the curious fact that the (smooth
positive) measure
can be absorbed by diffeomorphisms of $M$, so one just reduces to
the usual Ricci flow equation on $M$.

Let $T^q$ have a fixed flat metric given in local coordinates by
$\sum_{i=1}^q dx_i^2$. 
Put $\overline{M} \: = \: M \times T^q$ with a time-dependent
warped-product metric
\begin{equation} \label{8.1}
\overline{g}(t) \: = \: \sum_{\alpha, \beta =1}^n g_{\alpha \beta}(t)
\: dx^\alpha dx^\beta \: + \: u(t)^{\frac{2}{q}} \: \sum_{i=1}^q dx_i^2.
\end{equation}
We also write $u \: = \: e^{- \: \Psi}$.
If $M$ is compact then
the pushforward of the normalized volume density
$\frac{\dvol_{\overline{M}}}{\vol(\overline{M})}$ under the projection
$\overline{M} \rightarrow M$ is $\frac{u \: \dvol_M}{\int_M u \: \dvol_M}$.

The scalar curvature $\overline{R}$ of $\overline{M}$ equals
\begin{align} \label{8.2}
R_q\: & = \: R \: - \: 2 \: u^{-1} \: \nabla^2 u \: + \: 
\left( 1 - \frac{1}{q} \right) \: u^{-2} \: |\nabla u|^2 \\
& = \: R \: + \: 2 \: \nabla^2 \Psi \: - \: 
\left( 1 + \frac{1}{q} \right) \: |\nabla \Psi|^2, \notag
\end{align}
which is the modified scalar curvature considered in \cite{Lott (2007)}.
From \cite[Section 4]{Lott2}, 
the Ricci flow equation on $\overline{M}$ becomes
\begin{align} \label{8.3}
\frac{\partial u}{\partial t} \: & = \:
\nabla^2 u, \\
\frac{\partial g_{\alpha \beta}}{\partial t} \: & = \: - \: 2 
R_{\alpha \beta} \: + \: 2 \: u^{-1} \:
u_{;\alpha \beta} \: - \: \left( 2 - \frac{2}{q} \right) \:
u^{-2} \: u_{; \alpha} \:
u_{; \beta}. \notag
\end{align}
Note that
\begin{equation}
\frac{\partial}{\partial t} (u \: \dvol_M) \: = \:
- \: R_q \: u \: \dvol_M.
\end{equation}

Equivalently,
\begin{align} \label{8.4}
\frac{\partial \Psi}{\partial t} \: & = \:
\nabla^2 \Psi \: - \: |\nabla \Psi|^2, \\
\frac{\partial g_{\alpha \beta}}{\partial t} \: & = \: - \: 2 
\left( R_{\alpha \beta} \: + \: 
\Psi_{;\alpha \beta} \: - \: \frac{1}{q} \:
\Psi_{; \alpha} \:
\Psi_{; \beta} \right). \notag
\end{align}
The right-hand side of (\ref{8.4}) involves the modified Ricci
curvature
\begin{equation} \label{8.5}
\Ric_q \: = \: \Ric \: + \: \Hess \Psi \: - \: \frac{1}{q} \:
d\Psi \otimes d\Psi
\end{equation}
considered in \cite{Lott (2003)} and \cite{Qian (1997)}.
If $u \: \dvol_M$ is a (smooth positive) probability measure then
we consider (\ref{8.4}) to be the $N$-Ricci flow equations
for the smooth metric-measure space $(M, g, u \: \dvol_M)$, with $N = n + q$.
This is in analogy to the $N$-Ricci curvature considered in
\cite{Lott-Villani}.
(If $N = n$ then we require $\Psi$ to be locally constant
and just use the usual Ricci flow equation on $M$. That is, in the
noncollapsing situation we take the measure to be the 
$n$-dimensional Hausdorff measure.)

Taking $q = \infty$, we consider the $\infty$-Ricci flow
equations to be
\begin{align} \label{8.6}
\frac{\partial \Psi}{\partial t} \: & = \:
\nabla^2 \Psi \: - \: |\nabla \Psi|^2, \\
\frac{\partial g_{\alpha \beta}}{\partial t} \: & = \: - \: 2 
\left( R_{\alpha \beta} \: + \: 
\Psi_{;\alpha \beta} \right). \notag
\end{align}
\begin{remark} \label{rem5}
The occurrence of the Bakry-\'Emery tensor on the right-hand side of
(\ref{8.6}) is different from its occurrence in Perelman's
modified Ricci flow \cite{Perelman}. In (\ref{8.6}) the function
$u = e^{- \Psi}$ satisfies a forward heat equation, whereas in Perelman's
work the corresponding measure
$e^{-f} \: \dvol_M$
satisfies a backward heat equation. 
\end{remark}

\begin{example}
We now give a trivial example of collapsing of Ricci flow solutions.
For $u \: \dvol_M \in P^\infty(M)$, put
$u_j \: = \: \frac{1}{j} \: u$. Give $\overline{M}$ the corresponding
warped-product metric $\overline{g}_j$.
Suppose that $g(t)$ and $u(t)$ satisfy (\ref{8.3}). 
Consider the corresponding
solution $(\overline{M}, \overline{g}_j(\cdot))$ to the Ricci flow equation.
For any time $t$, as $j \rightarrow \infty$,  
the metric-measure spaces
$\left( \overline{M}, \overline{g}_j(t), 
\frac{
\dvol_{\overline{M}_j}
}{
\vol(\overline{M}_j)
} \right)$ converge in
the measured Gromov-Hausdorff topology to $(M, g(t), u(t) \dvol_M)$,
which satisfies (\ref{8.3}) by construction.
\end{example}
\begin{example}
To give another example, consider the most general 
$T^q$-invariant Ricci flow on $\overline{M}$. We can write
\begin{equation} \label{8.7}
\overline{g}(t) \: = \: \sum_{\alpha, \beta =1}^n g_{\alpha \beta}(t)
\: dx^\alpha dx^\beta \: + \: \sum_{i,j=1}^q  G_{ij}(t) \: 
\left( dx^i \: + \: A^i(t) \right) \: \left( dx^j \: + \: A^i(t) \right).
\end{equation}
Put $u \: = \: \sqrt{\det(G_{ij})}$,
$X^i_{\: \: j,\alpha} \: = \: \sum_{k=1}^q 
G^{ik} \: \partial_\alpha G_{kj} \: - \: \frac{2}{q} \: u^{-1} \:
\partial_\alpha u \: \delta^i_{\: \: j}$
 and $F^i_{\alpha \beta} \: = \:
\partial_\alpha A^i_\beta \: - \: \partial_\beta A^i_\alpha$.
From \cite[Section 4]{Lott2}, 
the Ricci flow equation on $\overline{M}$ implies that the evolution
of $u$ and $g_{\alpha \beta}$ is given by
\begin{align} \label{8.8}
\frac{\partial u}{\partial t} \: & = \:
\nabla^2 u \: - \: \frac{u}{4} \: \sum g^{\alpha \gamma} \: g^{\beta \delta} \:
G_{kl} \: F^k_{\alpha \beta} \: F^l_{\gamma \delta}, \\
\frac{\partial g_{\alpha \beta}}{\partial t} \: & = \: - \: 2 
R_{\alpha \beta} \: + \: 2 \: u^{-1} \:
u_{;\alpha \beta} \: - \: \left( 2 - \frac{2}{q} \right) \:
u^{-2} \: u_{; \alpha} \:
u_{; \beta} \: + \: \sum g^{\gamma \delta} \: G_{ij} \: F^i_{\alpha \gamma} \:
F^j_{\beta \delta} \: + \: \frac12 \: \Tr(X_\alpha X_\beta). \notag
\end{align}
As before, by uniformly rescaling the torus fibers we can construct
a sequence of Ricci flow solutions
$\left( \overline{M}, \overline{g}_j(t), 
\frac{
\dvol_{\overline{M}_j}
}{
\vol(\overline{M}_j)
} \right)$ which, for each time, converge in
the measured Gromov-Hausdorff topology to $(M, g(t), u(t) \dvol_M)$,
satisfying (\ref{8.8}).
Note that instead of satisfying the $N$-Ricci flow equations (\ref{8.3}), a
solution of (\ref{8.8}) satisfies the inequalities,
\begin{align} \label{8.9}
\frac{\partial u}{\partial t} \: - \:
\nabla^2 u \: & \le \: 0, \\
\frac{\partial g_{\alpha \beta}}{\partial t} \: + \: 2 
R_{\alpha \beta} \: - \: 2 \: u^{-1} \:
u_{;\alpha \beta} \: + \: \left( 2 - \frac{2}{q} \right) \:
u^{-2} \: u_{; \alpha} \:
u_{; \beta} \: & \ge \: 0. \notag
\end{align}
End of example.
\end{example}

Returning to (\ref{8.4}),
adding a Lie derivative with respect to $\nabla \Psi$ to the
right-hand side
gives the equations
\begin{align} \label{8.10}
\frac{\partial \Psi}{\partial t} \: & = \:
\nabla^2 \Psi, \\
\frac{\partial g_{\alpha \beta}}{\partial t} \: & = \: - \: 2 
R_{\alpha \beta} \: + \: \frac{2}{q} \:
\Psi_{; \alpha} \:
\Psi_{; \beta}. \notag
\end{align}
Note that
\begin{equation}
\frac{\partial}{\partial t} \left( e^{- \Psi} \: \dvol_M \right) \: = \: 
- \: \Tr(\Ric_q) \: e^{- \Psi} \: \dvol_M.
\end{equation}

In particular, the $\infty$-Ricci flow equations (\ref{8.6})
become
\begin{align} \label{8.11}
\frac{\partial \Psi}{\partial t} \: & = \:
\nabla^2 \Psi, \\
\frac{\partial g_{\alpha \beta}}{\partial t} \: & = \: - \: 2 
R_{\alpha \beta}. \notag
\end{align}
That is, we obtain
a forward heat equation coupled to an ordinary Ricci flow.

We now consider convexity of the entropy function for the system
(\ref{8.3}), where the entropy is computed relative to the
background measure $u \: \dvol_M$.
Consider the transport equations on $M$ :
\begin{align} \label{8.12}
\frac{\partial \rho}{\partial t} \: & = \: - \: u^{-1} \: \nabla^\alpha
(\rho u \nabla_\alpha \phi ) \: + \: \overline{R} \: \rho, \\
\frac{\partial \phi}{\partial t} \: & = \: - \: \frac12 \:
|\nabla \phi|^2 \: + \: \frac12 \: \overline{R}. \notag
\end{align}
Note that $\int_M \rho \: u \: \dvol_M$ is constant in $t$, so
we can take $\rho \: u \: \dvol_M$ to be a probability measure.
Applying
Corollary \ref{cor4} to $\overline{M}$ implies that 
if (\ref{8.3}) and (\ref{8.12}) are satisfied 
then $\int_M \left( \rho \: \log(\rho) \: - \: \phi \: \rho \right) 
\: u \: \dvol_M$ is convex in $t$.
Equivalently, in terms of the equations (\ref{8.10}),
if $\rho$ and $\phi$ satisfy
\begin{align} \label{8.13}
\frac{\partial \rho}{\partial t} \: & = \: - \: \nabla^\alpha
(\rho \nabla_\alpha \phi ) \: + \: \langle \nabla \Psi, \nabla \rho \rangle
\: + \: \langle \nabla \Psi, \nabla \phi \rangle \: \rho
\: + \: \overline{R} \: \rho, \\
\frac{\partial \phi}{\partial t} \: & = \: - \: \frac12 \:
|\nabla \phi|^2 \: + \: \langle \nabla \Psi, \nabla \phi \rangle 
\: + \: \frac12 \: \overline{R} \notag
\end{align}
then $\int_M \left( \rho \: \log(\rho) \: - \: \phi \: \rho \right) 
\: e^{-\Psi} \: \dvol_M$ is convex in $t$.

When $q \rightarrow \infty$, so that (\ref{8.11}) holds, we claim that
this convexity is no more than the convexity of Corollary \ref{cor4}
when applied to $M$,
after a change of variables.
Namely, when $q \rightarrow \infty$, if we put
\begin{align} \label{8.14}
\widetilde{\rho} \: & = \: e^{- \Psi} \: \rho, \\
\widetilde{\phi} \: & = \: \phi \: - \: \Psi \notag
\end{align}
then the equations (\ref{8.13}) are equivalent to
\begin{align} \label{8.15}
\frac{\partial \widetilde{\rho}}{\partial t} \: & = \: - \: \nabla^\alpha
(\widetilde{\rho} \nabla_\alpha \widetilde{\phi} ) \: + \:
R \: \widetilde{\rho}, \\
\frac{\partial \widetilde{\phi}}{\partial t} \: & = \: - \: \frac12 \:
|\nabla \widetilde{\phi}|^2
\: + \: \frac12 \: R. \notag
\end{align}
From Corollary \ref{cor4}, we know that
$\int_M \left( \widetilde{\rho} \: \log(\widetilde{\rho}) \: - \: 
\widetilde{\phi} \: \widetilde{\rho} \right) \: \dvol_M$ is convex in
$t$.
This is the same as saying that
$\int_M \left( \rho \: \log(\rho) \: - \: \phi \: \rho \right) 
\: e^{-\Psi} \: \dvol_M$ is convex in $t$.

To summarize, for each $N \in [n, \infty]$ there is a $N$-Ricci
flow (\ref{8.3}). Its right-hand side involves the $N$-Ricci curvature
tensor.  A background solution of the $N$-Ricci flow equation implies a 
convexity result for the
transport equations (\ref{8.12}). In the special case when
$N = \infty$, one can decouple the (smooth positive) measure 
within the metric flow by performing
diffeomorphisms, to recover a forward heat equation coupled
to the usual Ricci flow (\ref{8.6}).

\appendix
\section{The ${\mathcal L}_+$-entropy}
\label{section9}

In this section we give the analogs of Sections 
\ref{section6} and \ref{section7}
for the ${\mathcal L}_+$-functional that was considered in
\cite{Feldman-Ilmanen-Ni (2005)}. This is for possible future
reference. We reprove the
monotonicity of the Ilmanen-Feldman-Ni forward reduced volume.

Let $M$ be a connected closed manifold and let $g(\cdot)$ be a Ricci flow
solution on $M$, i.e. (\ref{3.1}) is satisfied.

\begin{definition}
If $\gamma \: : \: [t^\prime, t^{\prime \prime}] \rightarrow M$ is a 
smooth curve with $t^\prime > 0$ then
its ${\mathcal L}_+$-length is
\begin{equation} \label{A.1}
{\mathcal L}_+(\gamma) \: = \: \frac12 \int_{t^\prime}^{t^{\prime \prime}}
\sqrt{t} \:
\left( g \left( \frac{d\gamma}{dt}, \frac{d\gamma}{dt} \right) \: + \:
R(\gamma(t),t) \right) \: dt, 
\end{equation}
where the time-$t$ metric $g(t)$ is used to define the integrand.

Let $L_+^{t^\prime, t^{\prime \prime}}(m^\prime, m^{\prime \prime})$
be the infimum of ${\mathcal L}_+$ over curves $\gamma$ with
$\gamma(t^\prime) = m^\prime$ and
$\gamma(t^{\prime \prime}) = m^{\prime \prime}$.
\end{definition}

The Euler-Lagrange equation for the ${\mathcal L}_+$-functional
is easily derived to be
\begin{equation} \label{A.2}
\nabla_{\frac{d\gamma}{dt}} \left( \frac{d\gamma}{dt} \right) 
\: - \: \frac12 \: \nabla R
\: + \: \frac{1}{2t} \: \frac{d\gamma}{dt} 
\: - \: 2 \: \Ric \left( \frac{d\gamma}{dt},\cdot \right) \: = \: 0.
\end{equation}
The ${\mathcal L}_+$-exponential map is defined by saying that
for $V \in T_{m^\prime}M$, one has
\begin{equation} \label{A.3}
{\mathcal L}_+\exp_{m^\prime}^{t^\prime, t^{\prime \prime}} 
\left(V \right) \: = \: \gamma(t^{\prime \prime})
\end{equation}
where $\gamma$ is the solution to (\ref{A.2}) with
$\gamma(t^\prime) = m^\prime$ and $\frac{d\gamma}{dt} \Big|_{t = t^\prime}
= V$.

\begin{definition}
Given $\mu^\prime, \mu^{\prime \prime} \in P(M)$, put
\begin{equation} \label{A.4}
C_+^{t^\prime, t^{\prime \prime}}
(\mu^\prime, \mu^{\prime \prime}) \: = \: \inf_\Pi
\int_{M \times M} 
L_+^{t^\prime, t^{\prime \prime}}(m^\prime, m^{\prime \prime}) \:
d\Pi(m^\prime, m^{\prime \prime}),
\end{equation}
where $\Pi$ ranges over the elements of $P(M \times M)$ whose pushforward
to $M$ under projection onto the first (resp. second) factor is
$\mu^\prime$ (resp. $\mu^{\prime \prime}$).
Given a continuous curve $c \: : \: [t^\prime, t^{\prime \prime}] \rightarrow
P(M)$, put
\begin{equation} \label{A.5}
{\mathcal A}_+(c) 
\: = \: \sup_{J \in \Z^+} \sup_{t^\prime = t_0 \le t_1 \le \ldots
\le t_J = t^{\prime \prime}} \sum_{j=1}^J C_+^{t_{j-1}, t_j}(c(t_{j-1}),
c(t_j)).
\end{equation}
\end{definition}

We can think of 
${\mathcal A}_+$ as a generalized length functional associated
to the generalized metric $C_+$. 
By \cite[Theorem 7.21]{Villani2}, 
${\mathcal A}_+$ is a coercive action on $P(M)$
in the sense of
\cite[Definition 7.13]{Villani2}. In particular,
\begin{equation} \label{A.6}
C_+^{t^\prime, t^{\prime \prime}}
(\mu^\prime, \mu^{\prime \prime}) \: = \:
\inf_c {\mathcal A}_+(c),
\end{equation}
where $c$ ranges over continuous curves with $c(t^\prime) = \mu^\prime$ and
$c(t^{\prime \prime}) = \mu^{\prime \prime}$.

If $c \: : [t_0, t_1] 
\rightarrow P^\infty(M)$ is a smooth curve in $P^\infty(M)$ with
$t_0 > 0$ then 
we write $c(t) \: = \: \rho(t) \: \dvol_M$ and let $\phi(t)$ satisfy
\begin{equation} \label{A.7}
\frac{\partial \rho}{dt} \: = \: - \: 
\nabla^i \left( \rho \nabla_i \phi \right) \: + \: R \rho.
\end{equation}
Note that $\phi(t)$ is uniquely defined up to an additive constant.
The scalar curvature term in (\ref{A.7})  ensures that
\begin{equation} \label{A.8}
\frac{d}{dt} \int_M \rho \: \dvol_M \: = \: 0.
\end{equation}

Consider the Lagrangian 
\begin{equation} \label{A.9}
E_+(c) \: = \:
\int_{t_0}^{t_1} \int_M \sqrt{t} \left( |\nabla \phi|^2
\: + \: R \right) \: \rho \: \dvol_M \: dt,
\end{equation}
where the integrand at time $t$ is computed using $g(t)$.

\begin{proposition} \label{prop18}
Let 
\begin{equation} \label{A.10}
\rho \: \dvol_M \: : \: [t_0, t_1] 
\times [- \epsilon, \epsilon]
\rightarrow P^\infty(M)
\end{equation}
be a smooth map, with $\rho \equiv \rho(t,u)$.
Let 
\begin{equation} \label{A.11}
\phi \: : \: [t_0, t_1] \times [- \epsilon, \epsilon]
\rightarrow C^\infty(M)
\end{equation}
be a smooth map that satisfies (\ref{A.7}),
with $\phi = \phi(t,u)$.
Then 
\begin{align} \label{A.12}
\frac{dE_+}{du} \Bigg|_{u = 0} \: = \:
& 2 \sqrt{t} \int_M \phi \: \frac{\partial \rho}{\partial u}
\: \dvol_M \Bigg|_{t=t_0}^{t_1} \: - \\
& 2 \: 
\int_{t_0}^{t_1} 
\int_M \sqrt{t} \:
\left( \frac{\partial \phi}{\partial t} \: + \: \frac12 \:
|\nabla \phi|^2  \: - \: \frac12 \: 
R \: + \: \frac{1}{2t} \: \phi \right) \: \frac{\partial \rho}{\partial u}
\: \dvol_M \: dt, \notag
\end{align}
where the right-hand side is evaluated at $u = 0$.
\end{proposition}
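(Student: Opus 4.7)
The argument runs along exactly the same lines as Proposition~\ref{prop14}, with the $\sqrt{\tau}$-weight on the $\mathcal{L}_-$-side replaced by a $\sqrt{t}$-weight. The plan is first to differentiate $E_+$ directly under the integral, then to use (\ref{A.7}) to convert the $\partial_u \phi$ contribution into quantities involving only $\partial_u \rho$, and finally to integrate by parts in $t$.

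Concretely, differentiating (\ref{A.9}) in $u$ (noting that $g(t)$, $\dvol_M$ and $R$ do not depend on $u$) gives
\begin{equation*}
\frac{dE_+}{du}\Big|_{u=0} \: = \: \int_{t_0}^{t_1}\!\!\int_M \sqrt{t} \left( 2 \langle \nabla \phi, \nabla \tfrac{\partial \phi}{\partial u} \rangle \rho \: + \: (|\nabla \phi|^2 + R) \tfrac{\partial \rho}{\partial u} \right) \dvol_M \, dt.
\end{equation*}
For any fixed $f \in C^\infty(M)$, equation (\ref{A.7}) reads $\int_M f \partial_t \rho \dvol_M = \int_M (\langle \nabla f, \nabla \phi \rangle + fR) \rho \dvol_M$; differentiating this identity in $u$ and then setting $f = \phi$ expresses $\int_M \langle \nabla \phi, \nabla \partial_u \phi \rangle \rho \dvol_M$ in terms of $\int_M \phi \, \partial_t \partial_u \rho \dvol_M$ and an error proportional to $|\nabla \phi|^2 \partial_u \rho$ and $R \phi \, \partial_u \rho$. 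Substituting this into the display above is the analog of the first calculation in the proof of Proposition \ref{prop10}.

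The next step is to integrate the resulting $\int_{t_0}^{t_1} \sqrt{t} \int_M \phi \partial_t \partial_u \rho \dvol_M \, dt$ by parts in $t$. Two things must be tracked carefully: (i) $\partial_t \dvol_M = - R \dvol_M$ by (\ref{3.2}), which produces an $R \phi \partial_u \rho$ contribution that exactly cancels the one coming from the $R\rho$ term in (\ref{A.7}); and (ii) differentiation of the weight $\sqrt{t}$ itself generates a factor $\frac{1}{2\sqrt{t}} = \sqrt{t} \cdot \frac{1}{2t}$, which is precisely the source of the $\frac{1}{2t}\phi$ term appearing inside the parentheses on the right-hand side of (\ref{A.12}). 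Collecting the remaining terms, every contribution is proportional to $\sqrt{t}\,\partial_u \rho$ times one of $\partial_t \phi$, $|\nabla \phi|^2$, $R$, or $\phi/t$, with coefficients summing to exactly $-2(\partial_t \phi + \frac12|\nabla \phi|^2 - \frac12 R + \frac{1}{2t} \phi)$, while the only boundary term is $2\sqrt{t}\int_M \phi \partial_u \rho \dvol_M \big|_{t_0}^{t_1}$.

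There is no genuine obstacle here beyond bookkeeping; the only mildly delicate point is verifying that the several $R \phi \partial_u \rho$ contributions (from the $R\rho$ term in the transport equation, from $\partial_t \dvol_M$, and from the expansion of $\langle \nabla \phi, \nabla \partial_u \phi \rangle \rho$) cancel in the correct way, leaving the clean form (\ref{A.12}). The analogous cancellation on the $\mathcal{L}_-$-side, where Ricci flow is written in $\tau$ and the signs of the $R$-contributions are flipped, works identically, which is why the argument is parallel to Proposition~\ref{prop14}.
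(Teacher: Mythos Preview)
Your proposal is correct and follows exactly the template that the paper has in mind: the paper's own proof simply reads ``The proof is similar to that of Proposition~\ref{prop14}. We omit the details,'' and the steps you outline---differentiate $E_+$ in $u$, use the $u$-derivative of the weak form of (\ref{A.7}) with $f=\phi$ to eliminate the $\langle\nabla\phi,\nabla\partial_u\phi\rangle\rho$ term, then integrate by parts in $t$ tracking both $\partial_t\dvol_M=-R\,\dvol_M$ and $\partial_t\sqrt{t}=\tfrac{1}{2\sqrt{t}}$---are precisely the ones used in Propositions~\ref{prop1}, \ref{prop9} and \ref{prop14}. One small remark: the two $R\phi\,\partial_u\rho$ contributions that cancel are just the one coming from the $R\rho$ term in (\ref{A.7}) (equivalently, from expanding $\langle\nabla\phi,\nabla\partial_u\phi\rangle\rho$---these are the same contribution, not two separate ones) and the one from $\partial_t\dvol_M$; otherwise your bookkeeping is accurate.
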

\begin{proof}
The proof is similar to that of Proposition \ref{prop14}. We omit the details.
\end{proof}

From (\ref{A.12}), the Euler-Lagrange equation for $E_+$ is 
\begin{equation} \label{A.13}
\frac{\partial \phi}{\partial t} \: = \: - \: \frac12 \: |\nabla \phi|^2
\: + \: \frac12 \: R \: - \: \frac{1}{2t} \: \phi \: + \: \alpha(t),
\end{equation}
where $\alpha \in C^\infty([t_0, t_1])$. Changing $\phi$ by a
spatially-constant function, we can assume that $\alpha = 0$, so
\begin{equation} \label{A.14}
\frac{\partial \phi}{\partial t} \: = \: - \: \frac12 \: |\nabla \phi|^2
\: + \: \frac12 \: R \: - \: \frac{1}{2t} \: \phi.
\end{equation}
If a smooth curve in $P^\infty(M)$ minimizes 
$E_+$, relative to its endpoints, then it will satisfy (\ref{A.14}).
Given $t_0  \le t^\prime < t^{\prime \prime} \le t_1$,
the viscosity solution of 
(\ref{A.14}) satisfies
\begin{equation} \label{A.15}
2 \sqrt{t^{\prime \prime}} \:
\phi(t^{\prime \prime})(m^{\prime \prime}) \: = \: 
\inf_{m^\prime \in M} \left( 
2 \sqrt{t^\prime} \: \phi(t^\prime)(m^\prime) \: + \:
L_+^{t^\prime, t^{\prime \prime}}(m^\prime, m^{\prime \prime})
\right). 
\end{equation}
Then the solution of (\ref{A.7}) satisfies 
\begin{equation} \label{A.16}
\rho(t^{\prime \prime}) \: \dvol_M \: = \: 
(F_{t^\prime, t^{\prime \prime}})_* (\rho(t^\prime) \: \dvol_M), 
\end{equation}
where the transport map
$F_{t^\prime, t^{\prime \prime}} \: : \: M \rightarrow M$ is given by
\begin{equation} \label{A.17}
F_{t^\prime, t^{\prime \prime}}(m^\prime) \: = \: 
{\mathcal L}_+\exp_{m^\prime}^{t^\prime, t^{\prime \prime}} 
\left( \nabla_{m^\prime}
\phi(t^\prime) \right).
\end{equation}

\begin{proposition} \label{prop19}
Suppose that (\ref{A.7}) and (\ref{A.14}) are satisfied.
Then
\begin{equation} \label{A.18}
\frac{d}{dt} \int_M \phi \rho \: \dvol_M \: = \:
\frac12 \int_M \left( |\nabla \phi|^2 \: + \: R \right) \: \rho \: \dvol_M
\: - \: \frac{1}{2t} \int_M \phi \rho \: \dvol_M,
\end{equation}

\begin{align} \label{A.19}
\frac12 \: \frac{d}{dt} \int_M |\nabla \phi|^2 \: 
\rho \: \dvol_M \: = \: & \int_M \left( \Ric(\nabla \phi, \nabla \phi)
\: + \: \frac12 \: \langle \nabla R, \nabla \phi \rangle \right) \: \rho \: 
\dvol_M \: - \\
& \frac{1}{2t} \: \int_M |\nabla \phi|^2 \: 
\rho \: \dvol_M, \notag
\end{align}

\begin{equation} \label{A.20}
\frac{d}{dt} \int_M \rho \: \log(\rho) \: 
\dvol_M \: = \: \int_M \left( \langle \nabla \rho, \nabla \phi \rangle \: + \: 
R \: \rho \right) 
\dvol_M,
\end{equation}

\begin{equation} \label{A.21}
\frac{d}{dt} \int_M R \rho \: 
\dvol_M \: = \: \int_M \left( R_t \: + \: 
\langle \nabla R, \nabla \phi \rangle \right) \: \rho \: 
\dvol_M
\end{equation}

and
\begin{align} \label{A.22}
& \frac{d}{dt} \int_M \langle \nabla \rho, \nabla \phi \rangle \: 
\dvol_M \: = \\
& \int_M \left( |\Hess \phi|^2 \: + \: \Ric(\nabla \phi, \nabla \phi)
\: - \: 2 \: \langle \Ric, \Hess \phi \rangle \: - \: \frac12 
\: \nabla^2 R \right) \rho \: \dvol_M \: - \notag \\
& \frac{1}{2t} \: \int_M \langle \nabla \rho, \nabla \phi \rangle \: 
\dvol_M. \notag
\end{align}
\end{proposition}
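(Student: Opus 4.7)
The plan is to mimic the proof of Proposition \ref{prop10} (equivalently \ref{prop15}), exploiting that the only difference between the ${\mathcal L}_+$-setup and the ${\mathcal L}_0$-setup of Section \ref{section4} is the extra $-\phi/(2t)$ term in the Hamilton-Jacobi equation (\ref{A.14}). This single term is responsible for the lone $-(2t)^{-1}$ corrections on the right-hand sides of (\ref{A.18}), (\ref{A.19}), and (\ref{A.22}). The evolution equations (\ref{A.20}) and (\ref{A.21}) never use (\ref{A.14}); they follow purely from (\ref{A.7}) together with $\frac{\partial(\dvol_M)}{\partial t} = -R\,\dvol_M$, so the computations are identical to those producing (\ref{4.20}) and (\ref{4.21}).

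For (\ref{A.18}) I would differentiate $\int_M \phi\rho\,\dvol_M$, substitute $\partial_t\phi$ from (\ref{A.14}) and $\partial_t\rho$ from (\ref{A.7}), and integrate $-\int_M \phi\,\nabla^i(\rho\nabla_i\phi)\,\dvol_M$ by parts to obtain $\int_M |\nabla\phi|^2\rho\,\dvol_M$. The scalar-curvature contributions from $\partial_t\phi$, from the $R\rho$ term in (\ref{A.7}), and from $\partial_t(\dvol_M)$ combine cleanly, while the new $-\phi/(2t)$ piece passes through directly. For (\ref{A.19}) the starting point is
\begin{equation*}
\tfrac{1}{2}\tfrac{d}{dt}\int_M |\nabla\phi|^2\rho\,\dvol_M = \int_M \left( \Ric(\nabla\phi,\nabla\phi)\rho + \langle\nabla\phi,\nabla\partial_t\phi\rangle\rho + \tfrac{1}{2}|\nabla\phi|^2\partial_t\rho - \tfrac{1}{2}R|\nabla\phi|^2\rho\right)\dvol_M,
\end{equation*}
using $dg^{ab}/dt = 2R^{ab}$ from (\ref{3.1}). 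Inserting (\ref{A.14}) and (\ref{A.7}), the $\nabla|\nabla\phi|^2$ and $R|\nabla\phi|^2$ contributions cancel in pairs exactly as in Proposition \ref{prop10}, and the $-\phi/(2t)$ piece yields the solitary $-(2t)^{-1}\int_M|\nabla\phi|^2\rho\,\dvol_M$.

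For (\ref{A.22}) I would expand
\begin{equation*}
\tfrac{d}{dt}\int_M \langle \nabla\rho,\nabla\phi\rangle\,\dvol_M
\end{equation*}
as the sum of four contributions, from the variations of $g^{ab}$, $\rho$, $\phi$, and $\dvol_M$. The $g^{ab}$ variation gives $2\Ric(\nabla\rho,\nabla\phi)$, which after integration by parts produces $-\int_M\langle\nabla R,\nabla\phi\rangle\rho\,\dvol_M - 2\int_M\langle \Ric,\Hess\phi\rangle\rho\,\dvol_M$. The pieces from $\partial_t\rho$ and $\partial_t\phi$ are then handled via the Bochner identity $\tfrac{1}{2}\nabla^2|\nabla\phi|^2 = |\Hess\phi|^2 + \langle\nabla\phi,\nabla\nabla^2\phi\rangle + \Ric(\nabla\phi,\nabla\phi)$ exactly as in (\ref{4.29}), and the extra $-\phi/(2t)$ term in (\ref{A.14}) supplies the closing $-(2t)^{-1}\int_M\langle\nabla\rho,\nabla\phi\rangle\,\dvol_M$.

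The main obstacle is sign bookkeeping: the relations $dg_{ab}/dt = -2R_{ab}$ and $d(\dvol_M)/dt = -R\,\dvol_M$ flip sign from the $\tau$-calculations that underlie Proposition \ref{prop15}, so the formulas cannot be transcribed verbatim. The Ricci term in (\ref{A.19}) enters with a $+$ here but with a $-$ in (\ref{6.20}), and similarly the sign in front of $2\langle\Ric,\Hess\phi\rangle$ in (\ref{A.22}) is opposite to that in (\ref{6.23}); careful tracking of signs through the integrations by parts and the Bochner identity is the only delicate point.
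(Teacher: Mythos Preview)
Your proposal is correct and matches the paper's approach exactly: the paper simply writes ``The proof is similar to that of Proposition \ref{prop15}. We omit the details,'' and your outline is precisely the detailed version of that omitted computation, with the correct observation that the forward-time conventions of Section \ref{section4} apply while the extra $-\phi/(2t)$ term parallels the $-\phi/(2\tau)$ term of Section \ref{section6}.
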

\begin{proof}
The proof is similar to that of Proposition \ref{prop15}. We omit the details.
\end{proof}

\begin{corollary} \label{cor9}
Under the hypotheses of Proposition \ref{prop19},
\begin{equation} \label{A.23}
\left( t^{\frac12} \: \frac{d}{dt} \right)^2 
\int_M \rho \: \log(\rho) \: \dvol_M \: = \:
t \int_M \left( |\Ric \: - \: \Hess \phi|^2 \: + \:
\frac12 \: H(\nabla \phi) \right) \:
\rho \: \dvol_M,
\end{equation}
where
\begin{equation} \label{A.24}
H(X) \: = \: R_t \: + \: 2 \langle \nabla R, X \rangle \: + \:
2 \: \Ric(X,X) \: + \: \frac{R}{t}
\end{equation}
is Hamilton's trace Harnack expression.
Also,
\begin{align} \label{A.25}
& \left( t^{\frac32} \: \frac{d}{dt} \right)^2 
\left(  \int_M (\rho \: \log(\rho) \: - \: 
\phi \: \rho ) \: \dvol_M \: + \: \frac{n}{2} \: \log(t) \right) \: = 
\\
&
t^3 \: \int_M \left| \Ric \: - \: \Hess \phi \: + \: \frac{g}{2t}
\right|^2 \:
\rho \: \dvol_M. \notag
\end{align}
In particular, $\int_M \left( \rho \: \log(\rho) \: - \: 
\phi \: \rho \right) \: \dvol_M \: + \: 
\frac{n}{2} \: \log(t)$ is convex in $t^{- \: \frac12}$.
\end{corollary}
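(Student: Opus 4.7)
The plan is to mirror the proof of Corollary \ref{cor7}, making the sign changes appropriate to forward time $t$ and the $\mathcal{L}_+$-cost. Concretely, I will combine the five first-derivative identities in Proposition \ref{prop19} with the forward-time scalar curvature evolution
\begin{equation*}
R_t \: = \: \nabla^2 R \: + \: 2 \: |\Ric|^2,
\end{equation*}
and then repackage the result using the differential operator identities
\begin{equation*}
\left( t^{\frac12} \frac{d}{dt} \right)^2 f \: = \: t f^{\prime \prime} \: + \: \frac12 f^\prime, \qquad
\left( t^{\frac32} \frac{d}{dt} \right)^2 f \: = \: t^3 f^{\prime \prime} \: + \: \frac32 t^2 f^\prime.
\end{equation*}

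For the first identity \eqref{A.23}, I would let $F(t) = \int_M \rho \log \rho \: \dvol_M$ and differentiate \eqref{A.20} once more, using \eqref{A.21} and \eqref{A.22} to control the two terms that appear. After substituting $R_t = \nabla^2 R + 2|\Ric|^2$, the combination $|\Ric|^2 - 2\langle \Ric, \Hess\phi\rangle + |\Hess\phi|^2$ assembles itself into $|\Ric - \Hess\phi|^2$, and the remaining terms $\Ric(\nabla\phi,\nabla\phi) + \langle \nabla R, \nabla\phi\rangle + \frac12 \nabla^2 R + |\Ric|^2$ reconstitute $\frac12 H(\nabla\phi) - \frac{R}{2t}$. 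The residual $\frac{1}{2t}$-pieces coming from \eqref{A.22} and the $\frac{R}{2t}$-piece combine, via \eqref{A.20} run backwards, into $- \frac{1}{2t} F^\prime$, which is exactly the discrepancy between $F^{\prime \prime}$ and the expression $(t^{\frac12} \frac{d}{dt})^2 F / t$.

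For the second identity \eqref{A.25}, I put $G(t) \: = \: F(t) \: - \: \int_M \phi \rho \: \dvol_M \: + \: \frac{n}{2} \log(t)$ and differentiate \eqref{A.18} twice; using \eqref{A.19} and \eqref{A.21} together with $R_t = \nabla^2 R + 2|\Ric|^2$, the second derivative of $\int_M \phi \rho \: \dvol_M$ reduces to $\frac12 \int_M H(\nabla\phi) \rho \: \dvol_M$ plus a manageable $-\frac{3}{2t}$-multiple of its first derivative (again invoking \eqref{A.18} to trade $\int_M (|\nabla\phi|^2 + R)\rho$ for a time-derivative). Subtracting this from $F^{\prime \prime}$ cancels the $\frac12 H(\nabla\phi)$-contribution entirely, and the bookkeeping in $(t^{\frac32} \frac{d}{dt})^2 G$ produces exactly $t^3 \int_M |\Ric - \Hess\phi|^2 \rho + t^2 F^\prime + \frac{nt}{4}$. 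Finally I invoke
\begin{equation*}
\left| \Ric - \Hess\phi + \frac{g}{2t} \right|^2 \: = \: |\Ric - \Hess\phi|^2 \: + \: \frac{1}{t} \: (R - \nabla^2 \phi) \: + \: \frac{n}{4t^2},
\end{equation*}
and observe that $\int_M (R - \nabla^2 \phi) \rho \: \dvol_M \: = \: F^\prime$ by \eqref{A.20}, which matches the three terms on the nose.

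The main obstacle is purely organizational: tracking the various $\frac{1}{2t}$ and $\frac{1}{t^2}$ pieces that arise from the new $-\frac{1}{2t}\phi$ term in \eqref{A.14} so that they conspire, via \eqref{A.18} and \eqref{A.20}, to reproduce the $\frac{g}{2t}$ shift in the Hessian squared and the $+\frac{n}{2}\log t$ correction. No new analytic input beyond Proposition \ref{prop19} and the forward scalar curvature evolution equation is needed. The convexity of $\int_M (\rho \log \rho - \phi \rho) \dvol_M + \frac{n}{2} \log(t)$ in $t^{-\frac12}$ then follows because $(t^{\frac32}\frac{d}{dt})^2$ is, up to a positive factor, the second derivative with respect to $s = t^{-\frac12}$.
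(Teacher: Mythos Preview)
Your proposal is correct and takes essentially the same approach as the paper: the paper's proof is the single sentence ``This follows from Proposition \ref{prop19}, along with the curvature evolution equation (\ref{4.34}),'' and your outline simply unpacks that sentence, combining the five identities of Proposition \ref{prop19} with $R_t = \nabla^2 R + 2|\Ric|^2$ exactly as one must. Your bookkeeping of the $\frac{1}{2t}$-terms and the expansion of $\left|\Ric - \Hess\phi + \frac{g}{2t}\right|^2$ is accurate.
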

\begin{proof}
This follows from Proposition \ref{prop19}, along with the
curvature evolution equation (\ref{4.34}).
\end{proof}

Let $c \: : \: [t_0, t_1] \rightarrow P(M)$ be a minimizing
curve for ${\mathcal A}_+$ relative to its endpoints. We
assume that $c(t_0)$ are $c(t_1)$ are 
absolutely continuous with respect to 
a Riemannian volume density on $M$.
Then $c(t) \: = \: (F_{t_0, t})_* c(t_0)$, where there is a
semiconvex function $\phi_0 \in C(M)$ so that
$F_{t_0, t}(m_0) \: = \: {\mathcal L}_+ \exp_{m_0}^{t_0, t}
(\nabla_{m_0} \phi_0)$ \cite{Bernard-Buffoni (2007)},
\cite[Chapters 10,13]{Villani2}.
Define $\phi(t) \in C(M)$ by
\begin{equation} \label{A.27}
2 \sqrt{t} \: \phi(t)(m) \: = \: 
\inf_{m_0 \in M} \left( 2 \sqrt{t_o} \: \phi_0(m_0) \: + \:
L_+^{t_0, t}(m_0, m) \right).
\end{equation}
Define ${\mathcal E} \: : \: P(M) \rightarrow \R \cup \{ \infty \}$ 
as in (\ref{2.38}).
\begin{proposition} \label{prop20}
${\mathcal E}(c(t)) \: - \: \int_M \phi(t) \: dc(t) \: + \: 
\frac{n}{2} \log(t)$ is convex in $s = t^{- \: \frac12}$.
\end{proposition}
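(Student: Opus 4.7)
The plan is to mirror the proof of Proposition~\ref{prop16}, with sign conventions adjusted for the ${\mathcal L}_+$-cost. By Lagrangian optimal-transport theory \cite{Bernard-Buffoni (2007)} and \cite[Chapters 10,13]{Villani2}, one writes $c(t) = (F_{t_0,t})_* c(t_0)$ where $F_{t_0,t}(m_0) = {\mathcal L}_+\exp_{m_0}^{t_0,t}(\nabla_{m_0}\phi_0)$ for a semiconvex $\phi_0$, and $\phi(t)$ defined by (\ref{A.27}) satisfies $2\sqrt{t}\,(\phi(t) \circ F_{t_0,t})(m_0) - 2\sqrt{t_0}\,\phi_0(m_0) = L_+^{t_0,t}(m_0, F_{t_0,t}(m_0))$ for $c(t_0)$-a.e.\ $m_0$.

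First I would compute, along the ${\mathcal L}_+$-geodesic $\gamma(t) = F_{t_0,t}(m_0)$ with $X(t) = d\gamma/dt$, the evolution
\[
\frac{d}{dt}\bigl(R(\gamma(t),t) + |X(t)|^2\bigr) \: = \: H(X(t)) \: - \: \frac{1}{t}\bigl(R(\gamma(t),t) + |X(t)|^2\bigr),
\]
directly from the Euler-Lagrange equation (\ref{A.2}), the identity $\partial_t g = - 2\Ric$, and the definition (\ref{A.24}) of $H$. Together with the first variation $\frac{d}{dt}\bigl(2\sqrt{t}\,\phi(\gamma(t))\bigr) = \sqrt{t}\,(R + |X|^2)$, an elementary algebraic rearrangement yields
\[
\bigl(t^{3/2}\tfrac{d}{dt}\bigr)^2 \phi(\gamma(t)) \: = \: \tfrac{1}{2}\,t^3\, H(X(t)).
\]
The sign here is \emph{opposite} to that obtained in (\ref{7.8}) for the ${\mathcal L}_-$-case; this flip comes from the sign change in (\ref{A.2}) versus (\ref{6.3}). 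Integrating against $c(t_0)$ and using $X(t) = (\nabla\phi(t))(\gamma(t))$ for $c(t_0)$-a.e.\ $m_0$ (cf.\ (\ref{7.8.5})) gives
\[
\bigl(t^{3/2}\tfrac{d}{dt}\bigr)^2 \int_M \phi(t)\,dc(t) \: = \: \tfrac{1}{2}\,t^3 \int_M H(\nabla\phi(t))\,dc(t).
\]

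Next I would establish the ${\mathcal L}_+$-analog of Topping's semiconvexity estimate: namely, that ${\mathcal E}(c(t))$ is semiconvex in $t$ and, in the sense of Alexandrov second derivatives,
\[
\bigl(t^{3/2}\tfrac{d}{dt}\bigr)^2 {\mathcal E}(c(t)) \: \geq \: \tfrac{1}{2}\,t^3 \int_M H(\nabla\phi(t))\,dc(t) \: - \: \tfrac{n}{4}\,t.
\]
This is the ${\mathcal L}_+$ counterpart of the inequality (\ref{7.2}) invoked in the proof of Proposition~\ref{prop16}, and would be proved along the lines of \cite{Topping}: a Jacobi-field computation along the ${\mathcal L}_+$-flow controls the time-evolution of the log-Jacobian of $F_{t_0,t}$, and the desired lower bound follows by applying a Cauchy-Schwarz step $(\Tr A)^2 \leq n\,|A|^2$ to the tensor $A = \Ric - \Hess\phi + g/(2t)$ that appears in the smooth identity (\ref{A.25}) of Corollary~\ref{cor9}.

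Combining the two displays above with the elementary identity $\bigl(t^{3/2}\tfrac{d}{dt}\bigr)^2\bigl(\tfrac{n}{2}\log t\bigr) = \tfrac{n}{4}\,t$, the $H$-terms and the $\tfrac{n}{4}\,t$ terms cancel exactly, giving
\[
\bigl(t^{3/2}\tfrac{d}{dt}\bigr)^2 \Bigl[{\mathcal E}(c(t)) \: - \: \int_M \phi(t)\,dc(t) \: + \: \tfrac{n}{2}\log t\Bigr] \: \geq \: 0.
\]
A short direct computation shows that $\tfrac{d^2}{ds^2} = 4\,\bigl(t^{3/2}\tfrac{d}{dt}\bigr)^2$ in the change of variable $s = t^{-1/2}$, so the displayed inequality is exactly convexity in $s$. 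The main obstacle is the Topping-type inequality in the second step: while Corollary~\ref{cor9} already gives the stronger \emph{equality} on smooth curves in $P^\infty(M)$, promoting this to an Alexandrov-sense lower bound on $P(M)$ requires handling the possible singularities of ${\mathcal L}_+$-geodesics and the fact that $\phi(t)$ is only semiconvex; once this regularity input is in place, the bookkeeping of signs and the cancellation of $H$-terms proceeds exactly as in the proof of Proposition~\ref{prop16}.
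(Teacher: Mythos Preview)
Your proposal is correct and follows exactly the approach the paper intends: the paper's own proof of Proposition~\ref{prop20} simply reads ``The proof is similar to that of Proposition~\ref{prop16}. We omit the details,'' and your write-up supplies precisely those details---the sign flip in the $\phi$-evolution (yielding $+\tfrac12 t^3 H$ in place of $-\tfrac12\tau^3 H$), the ${\mathcal L}_+$-analog of Topping's Alexandrov lower bound on the entropy, and the cancellation against $\tfrac{n}{4}t$ from the logarithmic term. The one caveat you already flag---that the Topping-type inequality for ${\mathcal L}_+$ is assumed rather than reproved---is the same reliance on \cite{Topping} that the paper itself makes in the ${\mathcal L}_-$ case.
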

\begin{proof}
The proof is similar to that of Proposition \ref{prop16}.  We omit
the details.
\end{proof}

We now consider the limiting case when 
$t_0 = 0$ and $c(0) \: = \: \delta_p$.
Fix $p \in M$.
Choose $c(t_1) \in P(M)$ to be absolutely continuous
with respect to a Riemannian measure.
For each $m_1 \in M$, choose a (minimizing) ${\mathcal L}_+$-geodesic
$\gamma_{m_1} \: : \: [0, t_1] \rightarrow M$ with
$\gamma_{m_1}(0) = p$ and $\gamma_{m_1}(t_1) = m_1$. It is uniquely
defined for almost all $m_1 \in M$. 
Let ${\mathcal R}_t \: : \: M \rightarrow M$
be the map given by ${\mathcal R}_t(m_1) \: = \: \gamma_{m_1}(t)$. Then
as $t$ ranges in $[0, t_1]$,
$c(t) \: = \: ({\mathcal R}_t)_* c(t_1)$ describes a minimizing
curve for ${\mathcal A}_+$ relative to its endpoints.

Take
\begin{equation} \label{A.28}
\phi(t) \: = \:
l_+(\cdot, t) \: = \: \frac{L_+^{0,t}(p,\cdot)}{2 \sqrt{t}}.
\end{equation}

\begin{proposition} \label{prop21}
${\mathcal E}(c(t)) \: - \: \int_M \phi(t) \: dc(t) \: + \: 
\frac{n}{2} \log(t)$ is nondecreasing in $t$.
\end{proposition}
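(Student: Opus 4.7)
The plan is to mirror the proof of Proposition \ref{prop17} by combining the convexity of Proposition \ref{prop20} with an asymptotic analysis as $t \to 0^+$. Put $s = t^{- \frac{1}{2}}$ and set
\begin{equation*}
F(t) \: := \: {\mathcal E}(c(t)) \: - \: \int_M \phi(t) \: dc(t) \: + \: \frac{n}{2} \: \log(t).
\end{equation*}
By Proposition \ref{prop20}, $F$ is convex in $s$ on $[t_1^{- \frac{1}{2}}, \infty)$. A convex function on a half-line that approaches a finite limit at infinity is automatically nonincreasing in $s$, so it suffices to show that $\lim_{t \to 0^+} F(t)$ exists and is finite; this then yields that $F$ is nondecreasing in $t$.

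To compute the limit I would introduce the forward analog ${\mathcal L}_+\exp$ of Perelman's ${\mathcal L}$-exponential map: for $V \in T_pM$, set $({\mathcal L}_+\exp(t))(V) \: = \: \gamma(t)$, where $\gamma$ is the ${\mathcal L}_+$-geodesic emanating from $p$ with $\lim_{t \to 0^+} \sqrt{t} \: \gamma'(t) = V$. Let $\Omega_{t_1} \subset T_pM$ be the set of $V$ such that $\{{\mathcal L}_+\exp(t')(V)\}_{t' \in [0, t_1]}$ is ${\mathcal L}_+$-minimizing relative to its endpoints, and put $\widehat{c}(t_1) \: = \: ({\mathcal L}_+\exp(t_1)^{-1})_* c(t_1)$, so that $c(t) \: = \: {\mathcal L}_+\exp(t)_* \widehat{c}(t_1)$ for $t \in (0, t_1]$. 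Computing $F(t)$ against the pulled-back metric $\widehat{g}(t) \: = \: {\mathcal L}_+\exp(t)^* g(t)$ on $\Omega_{t_1}$ separates the Riemannian volume density from the fixed measure $\widehat{c}(t_1)$, which is what makes the limit tractable.

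Next I would carry out the Euclidean asymptotic analysis. Since the geodesics start at $t_0 = 0$, the background Ricci flow blown up near $p$ is asymptotically flat, and the argument of \cite[Section 16]{Kleiner-Lott} adapts: $\widehat{g}(t)/(4t)$ converges uniformly on $\overline{\Omega_{t_1}}$ to the flat Euclidean metric $g_{T_pM}$, and $\phi(t) \circ {\mathcal L}_+\exp(t)$ converges uniformly to $|V|^2$. Writing $\widehat{c}(t_1) = \rho_1 \: \dvol(g_{T_pM})$, the density of $\widehat{c}(t_1)$ relative to $\dvol(\widehat{g}(t))$ is asymptotic to $(4t)^{- \frac{n}{2}} \: \rho_1$, and assembling the three pieces gives
\begin{equation*}
\lim_{t \to 0^+} F(t) \: = \: \int_{\Omega_{t_1}} \rho_1 \: \log(\rho_1) \: \dvol_{T_pM} \: - \: \int_{\Omega_{t_1}} |V|^2 \: d\widehat{c}(t_1) \: - \: \frac{n}{2} \log 4,
\end{equation*}
a finite constant, from which the proposition follows.

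The main obstacle is the asymptotic analysis of ${\mathcal L}_+\exp(t)$ and of the rescaled metric $\widehat{g}(t)/(4t)$ as $t \to 0^+$. The sign differences between (\ref{6.3}) and (\ref{A.2}) alter the lower-order terms in the Euler-Lagrange equation but not the leading-order behavior in the rescaled variable $V = \sqrt{t} \: \gamma'(t)$, so convergence to the Euclidean model should carry over from the ${\mathcal L}_-$-case; nevertheless the estimates need to be retraced with the opposite sign on the $\Ric$-term and on the friction term $\frac{1}{2t} \gamma'(t)$, and one must check that ${\mathcal L}_+\exp(t_1)$ is indeed bijective onto a neighborhood that carries $c(t_1)$ almost surely.
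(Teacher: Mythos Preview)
Your proposal is correct and follows essentially the same approach as the paper, which simply says the proof is similar to that of Proposition~\ref{prop17} and omits the details. You have faithfully transcribed the structure of that argument to the ${\mathcal L}_+$-setting: convexity in $s=t^{-1/2}$ from Proposition~\ref{prop20}, together with the existence of a finite limit as $t\to 0^+$ via the ${\mathcal L}_+$-exponential map and the Euclidean asymptotics, forces monotonicity.
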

\begin{proof}
The proof is similar to that of Proposition \ref{prop17}. We omit the details.
\end{proof}

\begin{corollary} \label{cor10}
$t^{- \: \frac{n}{2}} \: \int_M e^{l_+} \: \dvol_M$ is
nonincreasing in $t$. 
\end{corollary}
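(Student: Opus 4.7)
The plan is to mirror the proof of Corollary \ref{cor8} essentially verbatim, with one sign flip throughout to account for the fact that the relevant monotone functional here, from Proposition \ref{prop21}, is $\mathcal{E}(c(t)) - \int_M \phi(t) \, dc(t) + \frac{n}{2}\log(t)$ (with a minus sign in front of the $\phi$-term), whereas in the $\mathcal{L}_-$ case the analogous quantity had a plus sign. Consequently, the Gibbs test measure at the right endpoint must be built with $e^{+\phi}$ rather than $e^{-\phi}$; this is precisely what produces $e^{l_+}$ in the statement.

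Concretely, fix $0 < t' < t'' $ and take
\[
c(t'') \: = \: \frac{e^{\phi(t'')} \: \dvol_M}{\int_M e^{\phi(t'')} \: \dvol_M}.
\]
Setting $t_1 = t''$, I would extend $c$ backwards to $t \in [0, t_1]$ via the $\mathcal{L}_+$-exponential construction of Proposition \ref{prop21}. Writing $\rho = c(t'')/\dvol_M$ and $Z = \int_M e^{\phi(t'')} \, \dvol_M$, so that $\log \rho = \phi(t'') - \log Z$, a direct calculation yields
\[
\mathcal{E}(c(t'')) \: - \: \int_M \phi(t'') \, dc(t'') \: + \: \frac{n}{2}\log(t'') \: = \: - \log\!\left( (t'')^{-n/2} \int_M e^{\phi(t'')} \, \dvol_M \right).
\]
Applying Proposition \ref{prop21} then gives the upper bound
\[
\mathcal{E}(c(t')) \: - \: \int_M \phi(t') \, dc(t') \: + \: \frac{n}{2}\log(t') \: \le \: - \log\!\left( (t'')^{-n/2} \int_M e^{\phi(t'')} \, \dvol_M \right).
\]

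The second ingredient is the Gibbs variational principle at time $t'$: by Jensen's inequality applied to $x \mapsto e^x$, the infimum of $\mu \mapsto \mathcal{E}(\mu) - \int_M \phi(t') \, d\mu$ over probability measures absolutely continuous with respect to $\dvol_M$ equals $- \log \int_M e^{\phi(t')} \, \dvol_M$, and is attained at the Gibbs measure built from $\phi(t')$. Applied to $c(t')$, this yields
\[
-\log\!\left( (t')^{-n/2} \int_M e^{\phi(t')} \, \dvol_M \right) \: \le \: \mathcal{E}(c(t')) \: - \: \int_M \phi(t') \, dc(t') \: + \: \frac{n}{2}\log(t').
\]
Chaining the two inequalities and exponentiating gives $(t')^{-n/2} \int_M e^{\phi(t')} \, \dvol_M \ge (t'')^{-n/2} \int_M e^{\phi(t'')} \, \dvol_M$, which, after substituting the identification $\phi(t) = l_+(\cdot, t)$ from (\ref{A.28}), is exactly the claimed monotonicity.

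There is no real obstacle: given Proposition \ref{prop21} and the choice of Gibbs test measure, the argument is formal. The one subtle point is purely bookkeeping — the sign of the $\phi$-term in the entropy functional must match the sign in the exponent of the Gibbs measure, so that the two "$-\log(\ldots)$" expressions line up correctly; this is exactly the phenomenon recorded in Remark \ref{rem4}, which is why the conversion from convexity/monotonicity of a modified entropy into monotonicity of an integral of an exponential works for both $\mathcal{L}_-$ and $\mathcal{L}_+$, but not for $\mathcal{L}_0$.
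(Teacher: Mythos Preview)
Your proposal is correct and is exactly the adaptation of the proof of Corollary~\ref{cor8} that the paper intends: choose the Gibbs measure built from $e^{+\phi(t'')}$, compute the value of the functional from Proposition~\ref{prop21} at $t''$, apply the monotonicity, and use the Gibbs variational principle at $t'$. The sign change you flagged is the only modification needed, and your bookkeeping of it is accurate.
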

\begin{proof}
The proof is similar to that of Corollary \ref{cor8}. We omit the details.
\end{proof}

\begin{remark} \label{rem6}
In the Euclidean case, $l_+(x, t) \: = \: \frac{|x|^2}{4t}$.
Because $l_+$ occurs with a positive
sign in the exponential in Corollary \ref{cor10}, 
we cannot expect $t^{- \: \frac{n}{2}} \: \int_M e^{l_+} \: \dvol_M$
to make sense if $M$ is noncompact. This is in contrast to
what happens for Perelman's reduced volume 
$\tau^{- \: \frac{n}{2}} \: \int_M e^{- \: l} \: \dvol_M$, which makes
sense if the Ricci flow has bounded sectional curvature on 
compact time intervals.
\end{remark}

\end{document}